\newtheorem{theorem}{Theorem}[subsection]
\newtheorem{definition}[theorem]{Definition}
\newtheorem{corollary}[theorem]{Corollary}
\newtheorem{lemma}[theorem]{Lemma}
\newtheorem{remark}[theorem]{Remark}
\newtheorem{fact}[theorem]{Fact}
\newcommand{\T}{\ensuremath{\mathcal{T}}}
\def\B{\ensuremath{\mathcal{B}}}
\def\b{\ensuremath{\mathcal{B}}}
\def\c{\ensuremath{\mathcal{C}}}
\def\d{\ensuremath{\mathcal{D}}}
\def\v{\ensuremath{\mathrm{v}}}
\newcommand{\rep}{\ensuremath{\mathrm{Rep}}}
\newcommand{\urep}{\ensuremath{\mathrm{{ Rep}}_{\mathrm u}}}
\newcommand{\hrep}{\ensuremath{\mathrm{{ Rep}}_{\mathrm h}}}
\newcommand{\uhrep}{\ensuremath{\mathrm{{ Rep}}_{\mathrm{uh}}}}
\newcommand{\bhrep}{\ensuremath{\mathbf{{Rep}}_{\mathrm h}}}
\newcommand{\brep}{\ensuremath{\mathbf{Rep}}}
\newcommand{\burep}{\ensuremath{\mathbf{{Rep}}_{\mathrm u}}}
\newcommand{\buhrep}{\ensuremath{\mathbf{{Rep}}_{{\mathrm{uh}}}}}
\def\gner{\ensuremath{\Delta}}
\def\ner{\ensuremath{\mathrm{N}}}
\def\bgner{\ensuremath{\underline{\Delta}}}
\def\bsner{\ensuremath{{\mathrm{S}}}}
\def\set{\ensuremath{\mathbf{Set}}}
\newcommand{\bicat}{\ensuremath{\mathbf{Bicat}}}
\newcommand{\Hom}{\ensuremath{\mathbf{Hom}}}
\def\cat{\ensuremath{\mathbf{Cat}}}
\newcommand{\Top}{\ensuremath{\mathbf{Top}}}
\newcommand{\class}{\ensuremath{\mathrm{B}}}
\def\diag{\ensuremath{\mathrm{diag}}}
\newcommand{\f}{\ensuremath{\mathcal{F}}}
\newcommand{\func}{\ensuremath{\mathrm{Func}}}
\newcommand{\sset}{\ensuremath{\mathbf{Simpl.Set}}}
\newcommand{\g}{\ensuremath{\mathcal{G}}}
\newcommand{\h}{\ensuremath{\mathcal{H}}}
\begin{document}
\title{\em Geometric Realizations of Tricategories}
\author{Antonio M. Cegarra}\author{Benjam\'in A. Heredia}
\thanks{The authors would like to express their gratitude to Eduardo Pareja Tobes for the useful discussions we maintained during the realization of this paper.\newline
This work has been supported by DGI
of Spain, Projects: MTM200765431 and MTM2011-22554 and in the case of the second author, partly by Ministerio de Ciencia, Cultura
y Deporte of Spain, FPU grant AP2010-3521.}
\address{\newline
Departamento de \'Algebra, Facultad de Ciencias, Universidad de Granada.
\newline 18071 Granada, Spain \newline
acegarra@ugr.es\ baheredia@ugr.es}
\begin{abstract} The homotopy theory of higher categorical structures has become a
relevant  part of the machinery of algebraic topology and algebraic
K-theory. This paper contains some contributions to the study of
classifying spaces for tricategories, with applications to the
homotopy theory of categories, monoidal categories, bicategories,
braided monoidal categories, and monoidal bicategories. Any
tricategory characteristically has  associated various simplicial or
pseudo-simplicial objects. This paper explores the
 relationship amongst three of them: the
 pseudo-simplicial bicategory so-called {\em Grothendieck
 nerve} of the tricategory,
  the simplicial bicategory termed its {\em Segal nerve}, and
 the simplicial set called its {\em Street geometric nerve}, and it proves the fact that
the geometric realizations of all of these possible candidate
`nerves of the tricategory' are homotopy equivalent. Any one of
these realizations could therefore be taken as the classifying space
of the tricategory. Our results provide coherence for all reasonable
extensions to tricategories of Quillen's definition of the
'classifying space' of a category as the geometric realization of
the category's Grothendieck nerve. Many properties of the
classifying space construction for tricategories may be easier to
establish depending on the nerve used for realizations. For
instance, by using Grothendieck nerves we state and prove the
precise form in which the process of taking classifying spaces
transports tricategorical coherence to homotopy coherence. Segal
nerves allow us to obtain an extension to bicategories of the
results by Mac Lane, Stasheff, and Fiedorowicz about the relation
between loop spaces and monoidal or braided monoidal categories by
showing that the group completion of the classifying space of a
bicategory enriched with a monoidal structure is, in a precise way,
a loop space. With the use of geometric nerves, we obtain genuine
simplicial sets whose simplices have a pleasing geometrical
description in terms of the cells of the tricategory and,
furthermore,  we obtain an extension  of the results by Joyal,
Street, and Tierney  about the classification of braided categorical
groups and their relationship with connected, simply connected
homotopy 3-types, by showing that, via the classifying space
construction, bicategorical groups are a convenient algebraic model
for connected homotopy 3-types.
\end{abstract}
\maketitle
\section{Introduction and summary}
The process of taking classifying spaces of categorical structures
has shown its relevance as a tool in algebraic topology and
algebraic K-theory, and one of the main reasons is that the
classifying space constructions transport categorical coherence to
homotopic coherence. We can easily stress the historical relevance
of the construction of classifying spaces by recalling that Quillen
\cite{quillen} defines a higher algebraic $K$-theory by taking
homotopy groups of the classifying spaces of certain categories.
Joyal and Tierney \cite{j-t} have shown that Gray-groupoids are a
suitable framework for studying homotopy 3-types. Monoidal
categories were shown by Stasheff \cite{sta} to be algebraic models
for loop spaces, and work by May \cite{may79} and Segal
\cite{segal74} showed that classifying spaces of symmetric monoidal
categories provide the most noteworthy examples of spaces with the
extra structure required to define an $\Omega$-spectrum, a fact
exploited with great success in algebraic K-theory.

This paper contains some contributions to the study of classifying
spaces
 for tricategories  $\T=(\T,
\boldsymbol{a},\boldsymbol{l},\boldsymbol{r},
\pi,\mu,\lambda,\rho)$, introduced  by Gordon-Power-Street in
\cite{g-p-s}. Since our results find here direct applications to
monoidal categories, bicategories, braided monoidal categories, or
monoidal bicategories, the paper will quite possibly be of special
interest to $K$-theorists as well as to researchers interested in
 homotopy theory of higher
categorical structures, a theory with demonstrated relevance as a
tool for the treatment of an extensive list of subjects of
recognized mathematical interest in several mathematical contexts
beyond
 homotopy theory, such as algebraic geometry,  geometric structures on low-dimensional manifolds, string
field theory, quantum algebra, or topological quantum theory and
conformal field theory.

As for bicategories \cite{ccg},  there is a miscellaneous collection
of different `nerves' that have been (or might reasonably be)
characteristically associated to any tricategory $\T$. This paper
explores the
 relationship amongst three of these nerves: the
 pseudo-simplicial bicategory called the {\em Grothendieck nerve} $\ner\T=(\ner\T,\chi,\omega):\Delta^{\!\mathrm{op}}\
  \to \bicat$,
 the simplicial bicategory termed the {\em Segal nerve} $\bsner\T:\Delta^{\!\mathrm{op}}\ \to \bicat$, and
 the simplicial set called the {\em Street geometric nerve} $\gner\T:\Delta^{\!\mathrm{op}}\ \to \set$.
 Since, as we prove, these three nerve constructions
  lead to homotopy equivalent spaces, any one of these spaces could
  therefore be taken as {\em the}
classifying space $\class\T$ of the bicategory. Many properties of
the classifying space construction for tricategories,
 $\T\mapsto \class\T$, may
 be easier to establish depending on the nerve used for
 realizations. Here, both for historical reasons and for theoretical
 interest, it is appropriate to start with the Grothendieck nerve
 construction to introduce $\class\T$. Let us briefly recall that it was
Grothendieck \cite{groth} who first associated a simplicial set
$\ner C$ to a small category $C$, calling it its nerve, whose
$p$-simplices are composable $p$-tuples $x_p\to\cdots\to x_0$ of
morphisms in $ C$.
 Geometric realization of its nerve
is the classifying space of the category, $\class C=|\ner C|$.
Hence, our first relevant result in the paper shows how
 Grothendieck nerve construction for
 categories rises to tricategories, that is, is to prove
 that
\begin{quote}{\em ``Any tricategory  $\T$ defines a normal pseudo-simplicial
bicategory
$$\ner\T=(\ner\T,\chi,\omega):\Delta^{\!\mathrm{op}}\ \to \bicat,$$
whose bicategory of $p$-simplices consists of $p$-tuples of
composable cells,}$$\ner_{\!p}\!\mathcal{T} =
\bigsqcup_{(x_0,\ldots,x_p)\in \mbox{\scriptsize
Ob}\!\mathcal{T}^{p+1}}\hspace{-0.3cm}
\mathcal{T}(x_1,x_0)\times\mathcal{T}(x_2,x_1)\times\cdots\times\mathcal{T}(x_p,x_{p-1}).$$
{\em  If $[q]\overset{a}\to [p]$ is any map in the simplicial
category $\Delta$, then the associated homomorphism
$a^*:\ner_{\!p}\!\T \to\ner_{\!q}\!\T$ is induced by the unit $1\to
\mathcal{T}(x,x)$ and composition ${\mathcal{T}(y,z)\times
\mathcal{T}(x,y)\to \mathcal{T}(x,z)}$ homomorphisms. The
structure pseudo-equivalences $\chi_{\!a,b}:b^*
a^*\Rightarrow(ab)^*$, for
 maps $[n]\overset{b}\to [q]\overset{a}\to
[p]$,  canonically arise  from the structure pseudo-equivalences
$\boldsymbol{a}, \boldsymbol{l}$, and $\boldsymbol{r}$, while the
structure invertible modifications
$\chi_{\!a,bc}\circ
\chi_{b,c}\,a^*\Rrightarrow\chi_{\!ab,c}\circ c^*\chi_{\!a,b}$, for
 maps $[m]\overset{c}\to [n]\overset{b}\to
[q] \overset{a}\to [p]$, are canonically provided by the
modifications data $\pi,\mu, \lambda,\rho$ of the tricategory."}
\end{quote}

Then, heavily dependent on the results by Carrasco-Cegarra-Garz\'on
\cite{c-c-g}, where an analysis of classifying spaces is performed
for lax diagrams of bicategories following the way Segal
\cite{segal74} and Thomason \cite{thomason} analyzed lax diagrams of
categories, we introduce
 {\em the classifying space $\class\T$},
of a tricategory $\T$, to be the classifying space of its
Grothendieck nerve $\ner\T$. Briefly, say that the so-called
Grothendieck construction \cite[\S 3.1]{c-c-g} on the
pseudo-simplicial bicategory $\ner\T$ produces a bicategory
$\int_{\!\Delta}\!\!\ner\T$, whose objects are the pairs $(x,p)$,
where $p\geq 0$ is an integer and $x=(x_p\to\cdots\to x_0)$ is an
object of the bicategory $\ner_{\!p}\!\T$,  whose hom-categories are
$$\xymatrix{\int_{\!\Delta}\!\!\ner\T\big((y,q),(x,p)\big)=
\bigsqcup\limits_{[q]\overset{a}\to [p]}\ner_{\!q}\!\T(y,a^*x),}$$
 and whose compositions, identities,  and
structure constraints are defined naturally. Again, Grothendieck
nerve construction on this bicategory, $\int_{\!\Delta}\!\!\ner\T$,
now gives rise to a normal pseudo-simplicial category
$\ner(\int_{\!\Delta}\!\!\ner\T):\Delta^{\!\mathrm{op}}\ \to \cat$,
on which once more Grothendieck construction leads to a category,
$\int_{\!\Delta}\!\!\ner(\int_{\!\Delta}\!\!\ner\T)$, whose
classifying space is, by definition, the classifying space of the
tricategory, that is,
$$\xymatrix{\class\T=
|\ner(\int_{\!\Delta}\!\!\ner(\int_{\!\Delta}\!\!\ner\T))|.}$$

The precise behavior of this classifying space construction,
$\T\mapsto \class\T$,  can be summarized as follows:
\begin{quote}
{\em  ``- Any trihomomorphism between tricategories $F:\T\to\T'$
induces a continuous map $\class F:\class\T\to\class\T'$.

\noindent - If $F,G:\T\to\T'$ are two trihomomorphisms between
tricategories, then any tritransformation, $F\Rightarrow G$,
canonically defines a homotopy between the induced maps on
classifying spaces, $ \class F\simeq \class G:\class\T\to\class\T'$.

\noindent - For any tricategory $\T$, there is a homotopy $\class
1_\T\simeq 1_{\class \T}:\class\T\to\class\T$ and, for any
composable trihomomorphisms $H:\T\to \T'$ and $H':\T'\to \T''$,
there is a homotopy $\class H'\ \class H\simeq \class
(H'H):\class\T\to\class T''$.

\noindent - Any triequivalence of tricategories $\T\to \T'$ induces
a homotopy equivalence on classifying spaces $\class\T\simeq
\class\T'$."}
\end{quote}

For instance, we provide a positive answer (long assumed) to the
question whether, as a consequence of the coherence theorem for
tricategories by Gordon-Power-Street \cite{g-p-s}, every tricategory
is `homotopy equivalent' to a Gray-category. More precisely, the
coherence theorem  states that, for any tricategory $\T$, there is a
Gray-category $\mathrm{G}(\T)$ with a triequivalence $\T\to
\mathrm{G}(\T)$. Then, it is a result that  \begin{quote}{\em
``There is an induced homotopy equivalence $\class\T\simeq \class
\mathrm{G}(\T).$"}
\end{quote}

To deal with the delooping properties of certain classifying spaces,
for any tricategory $\T$,  we introduce  its Segal nerve
 $\bsner\T$. This is a simplicial bicategory whose bicategory of $p$-simplices is the bicategory
 of {\em unitary homomorphic representations} of the ordinal category $[p]$ in $\T$ (roughly speaking,
 trihomomorphisms $[p] \to \T$ satisfying various requirements of normality).
 Each $\bsner\T$ is a `special' simplicial
bicategory, in the sense that the Segal projection homomorphisms on
it are biequivalences of bicategories, and thus it is a weak
3-category from the standpoint of Tamsamani \cite{tam} and Simpson
\cite{sim}. When $\T$ is a reduced tricategory (i.e., with only one
object), then the simplicial space
$\class\bsner\T:\Delta^{\!{\mathrm{op}}}\to\Top$, obtained by
replacing the bicategories $\bsner_p\T$ by their classifying spaces
$\class\bsner_p\T$, is a special simplicial space. Therefore,
according to Segal \cite{segal74}, $\Omega|\class\bsner\T|$ is a
group completion of $ \class\bsner_1\T$. In our development here, a
relevant result is the following:
\begin{quote} {\em ``For any tricategory $\T$, there is a
 homotopy equivalence $\class\T\simeq |\class\bsner\T|$,"
} \end{quote} which we apply  to the study of classifying spaces of
monoidal bicategories. Recall the aforementioned fact by Stasheff
and Mac Lane, that the group completion of the classifying space of
a category enriched with a monoidal structure is, in a precise way,
a loop space. Any monoidal bicategory $(\b,\otimes)$ gives rise to a
one-object tricategory $\Sigma(\b,\otimes)$, its `suspension'
tricategory following Street's terminology (or `delooping' in the
terminology of Kapranov-Voevodsky \cite{KV94b} or Berger
\cite{berger}). Defining the classifying space of a monoidal
bicategory $(\b,\otimes)$ to be the classifying space of its
suspension tricategory, that is,
$\class(\b,\otimes)=\class\Sigma(\b,\otimes)$, we mainly prove the
following extension to bicategories of Stasheff's result on monoidal
categories:
\begin{quote}{\em ``For any monoidal bicategory $(\b,\otimes)$, the loop space of its classifying
space, $\Omega\class(\b,\otimes)$, is a group completion of the
classifiying space of the underlying bicategory, $\class\b$. In
particular, if the monoid  of connected components $\pi_0\b$ is a
group, then there is a homotopy equivalence $\class \b\simeq
 \Omega\class(\b,\otimes)$."}
 \end{quote}

 If $(C,\otimes,\boldsymbol{c})$ any braided monoidal
category, then, thanks to the braiding, the suspension of the
underlying monoidal category $\Sigma(C,\otimes)$, which is actually
a bicategory,  has a structure  of monoidal bicategory. Hence, the
double suspension tricategory
$\Sigma^2(C,\otimes,\boldsymbol{c})=\Sigma(\Sigma(C,\otimes),\otimes)$
is defined. Since the classifying space of the braided monoidal
category is just the classifying space of its double suspension
tricategory, that is,
$\class(C,\otimes,\boldsymbol{c})=\class\Sigma^2(C,\otimes,\boldsymbol{c})$,
the above result implies the existence of a homotopy equivalence
$\Omega\class(C,\otimes,\boldsymbol{c})\simeq\class(C,\otimes)$,
between the loop space of classifying space of the braided monoidal
category and the classifying space of the underlying monoidal
category, a fact recently proved by Carrasco-Cegarra-Garz\'on
\cite{c-c-g}. Further, we conclude that
\begin{quote}{\em ``For any braided monoidal category $(C,\otimes,\boldsymbol{c})$, the double
loop space $\Omega^2\class(C,\otimes,\boldsymbol{c})$ is a group
completion of $\class C$,"} \end{quote} and thus a new proof that
the group completion of the underlying category of a braided
monoidal category is a double loop space, as was noted by Stasheff
\cite{sta} but originally proven by Fiedorowicz \cite{fie} (see also
Berger \cite{berger} and Baltenau-Fiedorowicz-Schw\"{a}nzl-Vogt
 \cite{b-f-s-v}). Let us
stress that, just because of this double-loop property, braided
monoidal categories have been playing a key role in recent
developments in quantum theory and related topics.

The process followed for defining the classifying space of a
tricategory $\T$, by means of its Grothendieck nerve $\ner\T$, is
quite indirect and  the CW-complex $\class\T$ thus obtained has
little apparent intuitive connection with the cells of the original
tricategory. However, when $\T$ is a (strict) $3$-category, then the
space $|\diag\ner\ner\ner\T|$, the geometric realization of the
simplicial set diagonal of the $3$-simplicial set $3$-fold nerve of
$\T$, has usually been taken in the literature  as the `correct'
classifying space of the $3$-category. A result in the paper states
that
\begin{quote}
{\em `` For any $3$-category $\T$, there is homotopy equivalence
$\class\T\simeq |\diag\ner\ner\ner\T|$."}
\end{quote}

The construction of the simplicial set $\diag\ner\ner\ner\T$ for
3-categories does not work in the non-strict case since the
compositions in arbitrary tricategories are not associative and not
unitary, which is crucial for the 3-simplicial structure of the
triple nerve  $\ner\ner\ner\T$, but only up to coherent equivalences
or isomorphisms. There is, however, another convincing way of
associating a simplicial set to a $3$-category $\T$ through  its
{\em geometric nerve} $\gner\T$, thanks to Street \cite{street2}. He
extends each ordinal $[p]=\{0<1<\cdots<p\}$ to a $p$-category
$\mathcal{O}_p$, the {\em $p^{\mathrm{th}}$-oriental}, such that the
$p$-simplices of $\gner\T$ are just the $3$-functors
$\mathcal{O}_p\to \T$.  Thus, $\gner\T$ is a simplicial set whose
0-simplices are the objects (0-cells) $ F_0$ of $\T$, whose
1-simplices are the 1-cells $F_{01}: F_1\to F_0$, whose 2-simplices
$$
\xymatrix@C=10pt@R=10pt{& F_ 2\ar[ld]_{ F_{12}}\ar[rd]^{ F_{02}}
\ar@{}[d]|(0.58){ \overset{F_{012_{}}}\Rightarrow}& \\  F_ 1\ar[rr]_{ F_{01}}&& F_ 0,}
$$
consist of two composable 1-cells and a 2-cell  $ F_{012}:
F_{01}\otimes  F_{12}\Rightarrow  F_{02}$, and so on. For instance,
if $\T=\Sigma^2A$ is the 3-groupoid having only one $i$-cell for
$0\leq  i \leq 2$ and whose $3$-cells are the elements of an abelian
group $A$, with all the compositions given by addition in $A$, then
$\gner\Sigma^2A=K(A,3)$, the Eilenberg-Mac Lane minimal complex. In
fact, as for the strict case, the geometric nerve construction
$\gner\T$ even works for arbitrary tricategories $\T$, as Duskin
\cite{duskin} and Street \cite{street3} pointed out, and we discuss
here in detail.  The geometric nerve $\gner\T$  is defined to be the
simplicial set  whose $p$-simplices are {\em unitary
representations} of the ordinal category $[p]$ in the tricategory
$\T$ (roughly speaking,
 lax functors $[p] \to \T$ satisfying various requirements of normality).
This is a simplicial set which completely encodes all the structure
of the tricategory and, furthermore, the cells of its geometric
realization $|\gner\T|$ have a pleasing geometrical description in
terms of the cells of $\T$. As a main result in the paper, we state
and prove that
\begin{quote}{\em ``For any tricategory $\T$, there is a
 homotopy equivalence $\class\T\simeq  |\Delta\T|$."}
\end{quote}

If $(\b,\otimes)$ is any monoidal bicategory, then its geometric
nerve, $\gner(\b,\otimes)$, is defined to be the geometric nerve of
its suspension tricategory $\Sigma(\b,\otimes)$.  Then, we obtain
the following:
\begin{quote} {\em ``For any monoidal bicategory $(\b,\otimes)$, there is a
homotopy equivalence $$\class(\b,\otimes)\simeq
|\gner(\b,\otimes)|."$$ }
\end{quote}
For instance,  since the geometric nerve of a braided monoidal
category $(C,\otimes,\boldsymbol{c})$ is the geometric nerve of its
double suspension tricategory, that is,
$\gner(C,\otimes,\boldsymbol{c})=\gner\Sigma^2(C,\otimes,\boldsymbol{c})$,
 the existence of a  homotopy equivalence
 $\class(C,\otimes,\boldsymbol{c})\simeq |\gner(C,\otimes,\boldsymbol{c})|$ follows, a fact  proved
 in
\cite{c-c-g}.

The geometric nerve $\gner(\b,\otimes)$, of any given monoidal
bicategory $(\b,\otimes)$,  is a Kan complex if and only if
$(\b,\otimes)$ is a {\em bicategorical group} (or weak  $3$-group,
or Gr-bicategory),  that is,  a monoidal bicategory whose 2-cells
are isomorphisms,
 whose 1-cells are equivalences, and each object $x$ has a quasi-inverse with respect to the tensor product.
In other words, a bicategorical group is a monoidal bicategory whose
suspension tricategory $\Sigma(\b,\otimes)$ is a {\em trigroupoid}
(or {\em Azumaya tricategory} in the terminology of
Gordon-Power-Street \cite{g-p-s}). The geometric nerve of any
bicategorical group $(\b,\otimes)$ is then a one vertex Kan complex, whose homotopy groups can be described
using only the  algebraic structure of $(\b,\otimes)$ by
\begin{itemize}
\item[-] $\pi_i\gner(\b,\otimes)= 0$,\ \, if $i \neq 1,2,3$.
\vspace{0.1cm}
\item[-] $\pi_1\gner(\b,\otimes)=\mathrm{Ob}\b/\!\thicksim$, \ the group of equivalence classes of objects in $\b$ where multiplication is induced by the tensor product.
    \vspace{0.1cm}
\item[-] $\pi_2\gner(\b,\otimes)=\mathrm{Aut}_\b(1)/\!\cong$, \ the group of isomorphism classes of autoequivalences of the unit object where the operation is induced by the horizontal composition in $\b$.
    \vspace{0.1cm}
\item[-]  $\pi_3\gner(\b,\otimes)=\mathrm{Aut}_\b(1_1)$,\  the group of automorphisms
of the identity 1-cell of the unit object where the operation is
vertical composition in $\b$.
\end{itemize}
Hence, its classifying space $\class(\b,\otimes)$ is a
(path) connected homotopy 3-type. In fact, every connected
homotopy 3-type can be realized in this way from a bicategorical
group, as suggested by the unpublished but widely publicized  result
of Joyal and Tierney \cite{j-t} that Gray-groups (called semistrict
3-groups by Baez-Neuchl \cite{b-m}) model connected homotopy
3-types (see also Berger \cite{berger}, Lack \cite{lack2}, or Leroy
\cite{leroy}). Recall that, by the coherence
theorem for tricategories, every bicategorical group is monoidal
biequivalent to a Gray-group. In the last example in
the paper, we outline in some detail the proof of the following
statement:
\begin{quote}
{\em ``For any connected CW-complex $X$ with $\pi_iX=0$ for
$i\geq 4$, there is a bicategorical group $(\b,\otimes)$ with a
homotopy equivalence $\class(\b,\otimes)\simeq X.$"}
\end{quote}
In the proof of this result, we explicitly show how to construct
from any connected minimal complex $M$ with $\pi_iM=0$ for
$i\neq 1,2,3$, a  bicategorical group $(\b,\otimes)$ with a
simplicial isomorphism $\gner(\b,\otimes)\cong M$. In the particular
case when, in addition, $\pi_3M=0$, then the resulting bicategorical
group has all its 2-cells identities, that is, it is actually a {\em
categorical group} $(C,\otimes)$ \cite{joyal}. While in the
particular case where $\pi_1M=0$, the bicategorical group has only
one object, so that it is actually the suspension of a {\em braided
categorical group} $(C,\otimes,\boldsymbol{c})$ \cite{joyal}. Hence,
our proof implicitly covers two relevant particular cases, already
well-known from Joyal-Tierney \cite{j-t} (see also \cite{c-c}), one stating that categorical
groups are a convenient algebraic model for connected homotopy
2-types, and the other that braided categorical groups are algebraic
models for connected, simply-connected homotopy 3-types, namely:
\begin{quote}
{\em  ``For any path-connected CW-complex $X$, there is categorical
group $(C,\otimes)$ and a homotopy equivalence
$\class(C,\otimes)\simeq X$ if and only if $\pi_iX=0$ for $i\geq
3$."}
\end{quote}
\begin{quote}
{\em  ``For any path-connected CW-complex $X$, there is a braided
categorical group $(C,\otimes,\boldsymbol{c})$ and a homotopy
equivalence $\class(C,\otimes,\boldsymbol{c})\simeq X$ if and only
if $\pi_iX=0$ for $i=1$ and $i\geq 3$."}
\end{quote}

\subsection{The organization of the paper} The plan of this paper is, briefly, as follows. After this
introductory Section 1, the paper is organized in four sections.
Section 2 is long and very technical, but crucial to our
discussions. It is dedicated to establishing and proving some needed
results concerning the notion of  representation of a category in a
tricategory, which is at the heart of the several constructions of
nerves for tricategories used in the paper. In Section 3, we mainly
include the construction of the Grothendieck nerve
$\ner\T:\Delta^{\!{\mathrm{op}}}\ \to \bicat$, for any tricategory
$\T$, and the  study of the basic properties concerning the behavior
of the Grothendieck nerve construction, $\T\mapsto \ner\T$, with
respect to trihomomorphisms of tricategories and tritransformations
between them. Section 4 contains the definition of classifying space
$\class\T$, for any tricategory $\T$. The main facts concerning the
classifying space construction $\T\mapsto \class\T$ are established
here. In this section we also study the relationship between
$\class\T$ and the space realization of the Segal nerve of a
tricategory, $\bsner\T:\Delta^{\!\mathrm{op}}\to\bicat$, which,  for
instance, we apply to  prove that the classifying space of any
monoidal bicategory is up to group completion a loop space. Finally,
Section 5 is mainly dedicated to describing  the geometric nerve
$\gner\T:\Delta^{\!{\mathrm{op}}}\ \to \set$, of any tricategory
$\T$, and to proving the existence of homotopy equivalences
$\class\T\simeq |\gner\T|$. Also, by means of the geometric nerve
construction for monoidal bicategories, $(\b,\otimes)\mapsto
\gner(\b,\otimes)$, we show here that bicategorical groups are a
convenient algebraic model for connected homotopy 3-types.

\subsection{Notations} We refer to Benabou \cite{benabou} and
Street \cite{street} for the background on bicategories. For any
bicategory $\b$, the composition
  in each hom-category $\b(x,y)$, that is, the vertical composition of 2-cells,
 is denoted by $v\cdot u$, while the symbol $\circ$ is used to denote the horizontal composition
 functors $\circ:\b(y,z)\times\b(x,y)\to\b(x,z)$.
The identity of an object is written as $1_x:x\to x$, and we shall
use the letters $\boldsymbol{a}$, $\boldsymbol{r}$, and
$\boldsymbol{l}$ to denote the associativity, right unit, and left
unit constraints of the bicategory, respectively. Hereinafter
\cite[Notation 4.9]{g-p-s}, the category of bicategories with
homomorphisms (or pseudo-functors) between them will be denoted by
$\mathbf{ Hom}$.

      In this paper we use the notion of  tricategory $\T=(\T,
\boldsymbol{a},\boldsymbol{l},\boldsymbol{r}, \pi,\mu,\lambda,\rho)$
as it was introduced  by Gordon, Power, and Street in \cite{g-p-s},
but with a minor alteration:  we require that the homomorphisms of
bicategories picking out units are normalized, and then written
simply as $1_t\in\T(t,t)$. This restriction is not substantive (see
Gruski \cite[Theorem 4.3.3]{gurski}), but it does slightly reduce
the amount of coherence data we have to deal with. For any object
$t$ of the tricategory $\T$, the arrow $\boldsymbol{r}1_t:1_t\to
1_t\otimes 1_t$ is an equivalence in the hom-bicategory $\T(t,t)$,
with the arrow $\boldsymbol{l}1_t:1_t\otimes 1_t\to 1_t$ an adjoint
quasi-inverse (see \cite[Lemma A.3.1]{gurski}). Hereafter, we
suppose the adjoint quasi-inverse of $\boldsymbol{r}$,
$\boldsymbol{r}^\bullet\dashv \boldsymbol{r}$, has been chosen such
that $\boldsymbol{r}^\bullet 1_t =\boldsymbol{l}1_t$.

As in \cite[\S 5]{g-p-s} and \cite[\S 6.3]{gurski}, $\mathbf{Bicat}$
denotes the tricategory of small bicategories, homomorphisms,
pseudo-transformations and modifications.  If $F,F':\b \to \c$ are
lax functors between bicategories, then we follow the convention of
\cite{g-p-s} in what is meant by a {\em lax transformation}
${\alpha:F\Rightarrow F'}$. Thus, $\alpha$ consists of morphisms
${\alpha x\!:Fx\to F'x}$, and of 2-cells $\alpha_{a}:\alpha y\circ
Fa\Rightarrow F'a\circ \alpha x$, subject to the usual axioms. In
the structure of $\bicat$ we use, the composition of
pseudo-transformations is taken to be
$$\big(\xymatrix@C=8pt{\c \ar@/^0.6pc/[rr]^{G}\ar@{}[rr]|{\Downarrow \beta}
\ar@/_0.6pc/[rr]_{ G'} &  &\d}\big)\big(\xymatrix@C=8pt{\b
\ar@/^0.6pc/[rr]^{F}\ar@{}[rr]|{\Downarrow \alpha}
\ar@/_0.6pc/[rr]_{ F'} &  &\c}\big)=\big(\xymatrix@C=9pt{\b
\ar@/^0.7pc/[rr]^{GF}\ar@{}[rr]|{\Downarrow \beta\alpha}
\ar@/_0.7pc/[rr]_{ G'\!F'} &  &\d}\big), $$ where  $\beta
\alpha=\beta F'\circ G\alpha:GF\Rightarrow G'F\Rightarrow G'\!F'$,
but note the existence of the useful invertible modification
\begin{equation}\label{4}\begin{array}{l}\xymatrix@R=15pt@C=20pt{GF\ar@{}@<-4pt>[rd]^(.5){\Rrightarrow}
\ar@{=>}[r]^{\beta F}\ar@{=>}[d]_{G\alpha}&G'F
\ar@{=>}[d]^{G'\beta}\\ GF'\ar@{=>}[r]^{\beta
F'}&G'F'}\end{array}\end{equation} whose component  at an object $x$
of $\b$, is $\beta_{\alpha x}$, the component of $\beta$ at the
morphism $\alpha x$.

For the general background on simplicial sets,  we mainly refer  to
\cite{g-j}. The {\em simplicial category} is denoted by $\Delta$,
and its objects, that is, the ordered sets ${[n]=\{0,1,\dots,n\}}$,
are usually considered as categories with only one morphism
$(i,j):j\rightarrow i$ when $0\leq i\leq j\leq n$. Then, a
non-decreasing map $[n]\rightarrow [m]$ is the same as a functor, so
that we see $\Delta $, the simplicial category of finite ordinal
numbers, as a full subcategory of $\cat$, the category (actually the
2-category) of small categories.

Throughout the paper, Segal's {\em geometric realization}
\cite{segal68} of a simplicial (compactly generated topological)
space $X:\Delta^{\!{^\mathrm{op}}}\to  \mathbf{Top}$ is denoted by
$|X|$. By regarding a set as a discrete space, the (Milnor's)
geometric realization of a simplicial set
$X:\Delta^{\!{^\mathrm{op}}}\to \set$ is $|X|$.

\section{Representations of categories in
tricategories}\label{secrep}

As we will show in the paper, the classifying space of any
tricategory
 can be realized up to homotopy by a simplicial set
$\Delta\T$, whose $p$-simplices $\Delta[p]\to \Delta\T$ are the
different {\em unitary representations} $[p]\to \T$, of the category
$[p]$ in the tricategory $\T$. Hence, we present below a  study
of these representations of categories in tricategories, and the
bicategories they form.
\subsection{(Unitary, homomorphic) Representations}
Roughly speaking, a representation of a category
$I$ in a tricategory $\T$ is a lax functor $I \to \T$, where $I$ is
regarded as tricategory in which the 2-cells and 3-cells are all
identities, satisfying various requirements of normality. But
noting that the definition given in \cite[Definition 3.1]{g-p-s}
for trihomomorphisms does not work for lax functors between
tricategories (the 3-cell $\bar{\pi}$ in the equation expressing
axiom (HTA1) is not well defined), we establish the following
explicit definition:

\begin{definition}\label{repr} A {\em representation} $F:I\to \T$, of a category
$I$ in a tricategory $\T$, is a system of data consisting of: an
object {\em $Fi\in \mbox{Ob}\T$}, for each object $i$ in $I$; a
$1$-cell $Fa: Fj\to Fi$, for each arrow $a:j\to i$ in $I$; $2$-cells
$F_{a,b}:Fa\otimes Fb\Rightarrow F(ab)$ and $F_i:1_{Fi}\Rightarrow
F1_i$, for each pair of composable arrows $k\overset{ b}\to j
\overset{a}\rightarrow i$ and each object {\em $i\in \mbox{Ob}I$},
respectively; and $3$-cells
$$\xymatrix@R=20pt@C=8pt{(Fa\!\otimes\!
Fb)\!\otimes\! Fc\ar@{=>}[d]_{ F_{\hspace{-1pt}a,b}\otimes
1}\ar@{}[rrd]|{ {
 F_{\hspace{-1pt}a,b,c}}\,\Rrightarrow}\ar@{=>}[rr]^{
{\boldsymbol{a}}}&&Fa\!\otimes\!(Fb\!\otimes\!Fc)\ar@{=>}[d]^{ 1\otimes
F_{b,c}}\\ F(ab)\!\otimes\! Fc\ar@{=>}[r]_(0.53){
F_{\hspace{-1pt}ab,c}}&F(abc)& Fa\!\otimes\! F(bc),\ar@{=>}[l]^(0.53){
F_{\hspace{-1pt}a,bc}} }
\xymatrix@C=-4pt@R=20pt{&1_{Fi}\!\otimes\!
Fa\ar@{=>}[rd]^{\boldsymbol{l}}\ar@{=>}[ld]_{F_i\otimes 1}
\ar@{}[d]|(.55){
\widehat{F}_a\,\Rrightarrow}&\\F1_i\!\otimes\! Fa\ar@{=>}[rr]_{F_{1, a}}&&Fa,}
\xymatrix@C=-4pt@R=20pt{&Fa\!\otimes\!1_{F\!j}\ar@{=>}[rd]^{\boldsymbol{r}^\bullet}\ar@{=>}[ld]_{1\otimes
F_{\!j}}\ar@{}[d]|(.55){ \widetilde{F}_a\,\Rrightarrow}&
\\Fa\!\otimes\! F1_{\!j}\ar@{=>}[rr]_{F_{a,1}}&&Fa,}
 $$
respectively associated to any three composable arrows $l\overset{
c}\to k \overset{b}\to j\overset{ a}\to i$ and any arrow $j\overset{ a}\to
i$ in the category $I$. These data are required to satisfy the coherence
conditions {\bf (CR1)} and {\bf (CR2)} as stated below.

The set of representations of a small category $I$ in a small
tricategory $\T$ is denoted by
\begin{equation}\label{rep} \rep(I,\T).\end{equation}

A representation $F:I\to \T$  is termed {\em unitary} or {\em
normal} whenever the following conditions hold:  for each object $i$
of $I$, $F1_i=1_{Fi}$ and $F_i=1_{1_{Fi}}$;  for each arrow $a:j\to
i$ of $I$, $F_{a,1_j}=\boldsymbol{r}^\bullet:Fa\otimes 1\Rightarrow
Fa$, $F_{1_i,a}=\boldsymbol{l}:1\otimes Fa\Rightarrow Fa$,  and the
$3$-cells $F_{a,1,c}$, $\hat{F}_a$, and $\tilde{F}_a$ are the unique
coherence isomorphisms. Furthermore, a representation $F:I\to \T$
whose structure $2$-cells $F_{\!a,b}$ are all equivalences (in the
corresponding hom-bicategories of $\T$ where they lie) and whose
structure $3$-cells $F_{\!a,b,c}$, $\hat{F}_a$, and $\tilde{F}_a$,
are all invertible is called a {\em homomorphic representation}. The
subsets of $\rep(I,\T)$ whose elements are the
unitary, homomorphic, and unitary homomorphic representations, are
denoted respectively by
\begin{equation}\label{urep} \urep(I,\T),\  \hrep(I,\T), \  \uhrep(I,\T).\end{equation}
\end{definition}

\noindent {\bf (CR1):} for any four composable arrows in $I$,
$m\overset{ d}\to l\overset{ c}\to k \overset{ b}\to j\overset{a}\to
i$, the equation $A=A'$ on $3$-cells in $\T$ holds, where:
$$
\xymatrix@C=-35pt@R=3pt{((Fa\!\otimes\! Fb)\!\otimes\!
Fc)\!\otimes\! Fd\ar@{}@<-66pt>[dddddddddd]|{\textstyle A\!=}\ar@{=>}[rr]^-{ \boldsymbol{a}}
\ar@{=>}[rddd]^-{ \boldsymbol{a}\otimes 1}\ar@{=>}[ddddd]_{ (F_{a,b}\otimes
1)\!\otimes 1}&&(Fa\!\otimes\!Fb)\!\otimes\!(Fc\!\otimes\!
Fd)\ar@{=>}[rr]^-{ \boldsymbol{a}}  & &
Fa\!\otimes\!(Fb\!\otimes\!(Fc\!\otimes\! Fd))\ar@{=>}[ddddd]^{ 1\otimes(1\otimes F_{c,d})}\\
&&&&\\
&&^{\pi\cong}&&\\
 &(Fa\!\otimes\!(Fb\!\otimes\! Fc))\!\otimes\! Fd\ar@{=>}[rr]^-{ \boldsymbol{a}}
 \ar@{=>}@<35pt>[dddd]|(0.3){\ \ (1\otimes F_{b,c})\otimes 1}&
 &Fa\!\otimes\!((Fb\!\otimes\! Fc)\!\otimes\! Fd)\ar@{=>}[ruuu]^-{ 1\otimes \boldsymbol{a}}
 \ar@{=>}@<-35pt>[dddd]|(0.7){1\otimes (F_{b,c}\otimes 1)\ \ }&\\
 &&&&\\
\hspace{-15pt}(F(ab)\!\otimes\! Fc)\!\otimes\! Fd\ar@{=>}[ddddd]_{
F_{ab,c}\otimes 1} \ar@{}[r]|(0.9){ {
 F_{a,b,c}\otimes 1 }\,\Rrightarrow}&&^{\cong}&
\ar@{}[r]|(0.1){ { 1\otimes
 F_{b,c,d}}\,\Rrightarrow}&
\hspace{15pt}Fa\!\otimes\!(Fb\!\otimes\!F(cd))\ar@{=>}[ddddd]^{ 1\otimes F_{b,cd}}\\
&&&&\\
\ar@{}[rrrrddd]|(0.5){{
 F_{a,bc,d}}\, \Rrightarrow}&(Fa\!\otimes\! F(bc)) \!\otimes\!
Fd\ar@{=>}[rr]^-{
\boldsymbol{a}}\ar@{=>}[lddd]|(0.4){F_{a,bc}\otimes 1}&&
Fa\!\otimes\!(F(bc)\!\otimes\! Fd)\ar@{=>}[rddd]|(0.4){1\otimes F_{bc,d}}&\\
&&&&\\
&&&&\\
F(abc)\!\otimes\! Fd\ar@{=>}[rr]^{
F_{abc,d}}&&F(abcd)&&Fa\!\otimes\! F(bcd)\ar@{=>}[ll]_{ F_{a,bcd} }}
$$
$$\xymatrix@C=-32pt@R=3pt{((Fa\!\otimes\! Fb)\!\otimes\!
Fc)\!\otimes\! Fd
\ar@{}@<-66pt>[dddddddddd]|{\textstyle A'\!=}
 \ar@{=>}[rr]^-{ \boldsymbol{a}}\ar@{=>}[ddddd]_{
(F_{a,b}\otimes 1)\otimes 1}\ar@{}[rddd]^(0.6){ \cong}&&
 (Fa\!\otimes\!Fb)\!\otimes\!(Fc\!\otimes\! Fd)
 \ar@{=>}[rr]^-{ \boldsymbol{a}}
 \ar@{=>}[dddl]|{F_{a,b}\otimes (1\otimes 1)}\ar@{=>}[dddr]|{(1\otimes 1) \otimes F_{c,d}}& &
Fa\!\otimes\!(Fb\!\otimes\!(Fc\!\otimes\! Fd))
\ar@{=>}[ddddd]^{ 1\otimes(1\otimes F_{c,d})}  \ar@{}[lddd]_(0.6){ \cong}\\
&&&&\\
&&&&\\
&F(ab)\!\otimes\!(Fc\!\otimes\! Fd) \ar@{=>}[ddr]_(.4){1 \otimes
F_{c,d}} \ar@{}[rr]^{ \cong}&&(Fa\!\otimes\!Fb)\!\otimes\!
F(cd) \ar@{=>}[ddl]^(.4){\ F_{a,b}\otimes 1}
\ar@{=>}[rdd]^-{\boldsymbol{a}}&\\
&&&&\\
\hspace{-15pt}(F(ab)\!\otimes\! Fc)\!\otimes\! Fd \ar@{=>}[ddddd]_{
F_{ab,c}\otimes 1}\ar@{=>}[ruu]^-{\boldsymbol{a}} &&F(ab)\!\otimes\!
F(cd)\ar@{=>}[ddddd]|{F_{ab,cd}}&&\hspace{15pt}Fa\!\otimes\!(Fb\!\otimes\!F(cd))
\ar@{=>}[ddddd]^{ 1\otimes F_{b,cd}}\\
&&&&\\
&&&&\\
&&&&\\
&&&&\\
F(abc)\!\otimes\! Fd\ar@{=>}[rr]^{
F_{abc,d}}\ar@{}[rruuuuu]^(0.55){{
 F_{ab,c,d}}\,\Rrightarrow}&&F(abcd)&&Fa\!\otimes\!
F(bcd)\ar@{=>}[ll]_{
F_{a,bcd}}\ar@{}[lluuuuu]_(0.55){{
 F_{a,b,cd}}\,\Rrightarrow} }
$$
\noindent {\bf (CR2):} for any two composable arrows  $k\overset{
b}\to j \overset{ a}\rightarrow i$ in $I$, the equations $B=B'$,
$C=C'$, and $D=D'$, on $3$-cells in $\T$ hold, where:
$$
\xymatrix@C=12pt@R=10pt{
\ar@{}@<-56pt>[dd]|{\textstyle B\!=}
&(Fa\!\otimes\! 1_{F\!j})\!\otimes\! Fb\ar@{=>}[rr]^{(1\otimes F_j)\otimes
1}
\ar@{=>}[ld]_{\boldsymbol{a}}\ar@{=>}[dd]|(.6){\boldsymbol{r}^\bullet\otimes
1}\ar@{}[dr]|(.6){ \Lleftarrow\widetilde{F}_a\otimes 1}&&
(Fa\!\otimes\! F1_j)\!\otimes\! Fb\ar@{=>}[rd]^(.6){F_{a,1}\otimes 1}\ar@{=>}[lldd]^{\ F_{a,1}\otimes 1}& \\
Fa\!\otimes\!(1_{F\!j}\!\otimes\! Fb)\ar@{=>}[rd]_{1\otimes \boldsymbol{l}}\ar@{}[r]|(.6){\cong { \mu}}&&&&
Fa\!\otimes\! Fb \ar@{}[ll]|(.6){ =}\ar@{=>}[ld]^{F_{a,b}}\\
&Fa\!\otimes\! Fb\ar@{=>}[rr]^{F_{a,b}}&&F(ab)&
}
$$
$$
\xymatrix@C=-10pt@R=13pt{
\ar@{}@<-64pt>[dd]|{\textstyle B'\!=}
&(Fa\!\otimes\!1_{F\!j})\!\otimes\! Fb\ar@{=>}[rr]^{(1\otimes F_j)\otimes
1} \ar@{=>}[ld]_{\boldsymbol{a}}&&(Fa\!\otimes\! F1_j)\!\otimes\!
Fb\ar@{=>}[rd]^(.6){F_{a,1}\otimes 1}
\ar@{=>}[ld]^{\boldsymbol{a}}\ar@{}[dd]|{\Lleftarrow
 F_{a,1,b}}& \\
Fa\!\otimes\!(1_{F\!j}\!\otimes\! Fb)\ar@{=>}[rr]^{1\otimes(F_j\otimes 1)}
\ar@{}[rrru]|{\cong}\ar@{=>}[rd]_{1\otimes \boldsymbol{l}}&&
Fa\!\otimes\!(F1_j\!\otimes\! Fb)\ar@{=>}[ld]^(.3){1\otimes F_{1,b}}&&Fa\!\otimes\! Fb \ar@{=>}[ld]^{F_{a,b}}\\
&Fa\!\otimes\! Fb\ar@{=>}[rr]^(.6){F_{a,b}}\ar@{}[u]|(.55){
\Lleftarrow 1\otimes \widehat{F}_{a}}&&F(ab)&
} $$
$$
\xymatrix@C=10pt@R=10pt{
\ar@{}@<-66pt>[ddddd]|{\textstyle C\!=}
  &(1_{F\!i}\!\otimes\! Fa)\otimes Fb\ar@{=>}[rd]\ar@{}@<2pt>[rd]^(.6){(F_i\otimes 1)\otimes 1}
                                 \ar@{=>}[ld]_{\boldsymbol{a}}
                                 \ar@{=>}[rdddd]_{\boldsymbol{l}\otimes 1}
                                 \ar@{}@<-5pt>[rdd]|(0.8){\Lleftarrow
                                   \widehat{F}_{a}\otimes 1}
                                 \ar@{}[dd]_(0.65){\cong
                                  \lambda}
                                 \ar@{}@<-10pt>[ddddd]_(0.7){ \cong}
                                 & \\
  1_{F\!i}\!\otimes\!(Fa\!\otimes\!Fb)\ar@{=>}[rrddd]^{\boldsymbol{l}}
                               \ar@{=>}[ddd]_{1\otimes F_{a,b}}
   & & (F1_i\!\otimes\! Fa)\!\otimes\! Fb\ar@{=>}[ddd]^{F_{1,a}\otimes 1} \\ &&
   \\&&
   \\
  1_{F\!i}\!\otimes\! F(ab) \ar@{=>}[rd]_{\boldsymbol{l}}
   && Fa\!\otimes\!Fb\ar@{=>}[ld]^{F_{a,b}} \\
   & F(ab)&
}
$$
$$
\xymatrix@C=6pt@R=10pt{
\ar@{}@<-66pt>[ddddd]|{\textstyle C'\!=}
  &(1_{F\!i}\!\otimes\! Fa)\!\otimes\! Fb\ar@{=>}[rd]\ar@{}@<2pt>[rd]^(.6){(F_i\otimes 1)\otimes 1}
                                 \ar@{=>}[ld]_{\boldsymbol{a}}
                                 & \\
  1_{F\!i}\!\otimes\!(Fa\!\otimes\! Fb)\ar@{=>}[rd]^{F_i\otimes 1}
                               \ar@{=>}[ddd]_{1\otimes F_{a,b}}
                               \ar@{}@<-5pt>[rddd]|(.4){ \cong}
   & ^{\cong}& (F1_i\!\otimes\!Fa)\!\otimes\!Fb\ar@{=>}[ddd]^{F_{1,a}\otimes 1}
                                   \ar@{=>}[ld]_{\boldsymbol{a}}\\
  & F1_i\!\otimes\!(Fa\!\otimes\!Fb)\ar@{=>}[d]^{1\otimes F_{a,b}}
                               \ar@{}[rdd]^(.6){\Lleftarrow F_{1,a,b}}
                                & \\
  & F1_i\!\otimes\!F(ab) \ar@{=>}[dd]^{F_{1,ab}} & \\
  1_{F\!i}\!\otimes\!F(ab) \ar@{=>}[rd]_{\boldsymbol{l}}
                        \ar@{=>}[ru]^{F_i\otimes 1}
                        \ar@{}[r]|(0.7){\Lleftarrow\widehat{F}_{ab}}
   && Fa\!\otimes\! Fb\ar@{=>}[ld]^{F_{a,b}} \\
   & F(ab)&
}
$$
$$
\xymatrix@C=40pt@R=6pt{
\ar@{}@<-66pt>[ddddd]|{\textstyle D\!=}
  &(Fa\!\otimes\! Fb)\!\otimes\! 1_{F\!k}\ar@{=>}[rd]^(.6){F_{a,b}\otimes 1}
                                 \ar@{=>}[ld]_{\boldsymbol{a}}
                                 \ar@{=>}[ldddd]^{\boldsymbol{r}^\bullet}
                                 & \\
  Fa\!\otimes\!(Fb\!\otimes\! 1_{F\!k})\ar@{=>}[ddd]_{1\otimes \boldsymbol{r}^\bullet}
                               \ar@{}[rdd]|(0.35){\rho\, \cong}
   & & F(ab)\!\otimes\! 1_{F\!k}\ar@{=>}[ddd]^{1\otimes F_k}
                            \ar@{=>}[ldddd]_{\boldsymbol{r}^\bullet} \\
   & \ar@{}[d]|{ \cong}&
   \\&&
   \\
  Fa\!\otimes\! Fb \ar@{=>}[rd]_{F_{a,b}}
   && F(ab)\!\otimes\! F1_k\ar@{=>}[ld]^{F_{ab,1}}
                         \ar@{}@<16pt>[uu]|(0.6){\Lleftarrow{\widetilde{F}_{ab}}} \\
   & F(ab)&
}
$$
$$
\xymatrix@C=-15pt@R=5pt{
\ar@{}@<-56pt>[ddddd]|{\textstyle D'\!=}
  &&(Fa\!\otimes\! Fb)\!\otimes\!1_{F\!k}\ar@{=>}[rrd]^{F_{a,b}\otimes 1}
                                 \ar@{=>}[lld]_{\boldsymbol{a}}
                                 \ar@{=>}[rdd]^{1\otimes F_k}
                                 \ar@{}[dddl]|{ \cong}
                                 && \\
  Fa\!\otimes\!(Fb\!\otimes\!1_{F\!k})\ar@{=>}[ddd]_{1\otimes \boldsymbol{r}^\bullet}
                               \ar@{=>}[rdd]^{1\otimes(1\otimes F_k)}
   & &&& F(ab)\!\otimes\!1_{F\!k}\ar@{=>}[ddd]^{1\otimes F_k}
                              \ar@{}[ld]|{ \cong}
                            \\
  \ar@{}@<3pt>[d]^(0.5){\Lleftarrow{ 1\otimes \widetilde{F}_b}}
  & &&(Fa\!\otimes\! Fb)\!\otimes\! F1_k\ar@{=>}[rdd]_{F_{a,b}\otimes 1}
                               \ar@{=>}[dll]^{\boldsymbol{a}}& \\
  & Fa\!\otimes\!(Fb\!\otimes\!F1_k)\ar@{=>}[dl]^{1\otimes F_{b,1}}&
       \ar@{}[d]|{\Lleftarrow{ F_{a,b,1}}}&&
   \\
  Fa\!\otimes\! Fb \ar@{=>}[rrd]_{F_{a,b}}
   &&&& F(ab)\!\otimes\! F1_k\ar@{=>}[lld]^{F_{ab,1}}
                          \\
   && F(ab)&&
}
$$

\subsection{Example} \label{eta}
 Let $A$ be an abelian group, and let $\Sigma^2A$  denote the tricategory
(actually a 3-groupoid) having only one $i$-cell for $0\leq i\leq 2$
and whose $3$-cells are the elements of $A$, with all the
compositions given by addition in $A$. Then, for any small category
$I$ (e.g., a group $G$ or a monoid $M$), a unitary representation
$F:I\to \Sigma^2A$ is the same as a function $F:\ner_3I\to A$
satisfying the equations
$$F(b,c,d)+F(a,bc,d)+F(a,b,c)=F(ab,c,d)+F(a,b,cd),$$ and such that
$F(a,b,c)=0$ whenever any of the arrows $a$, $b$, or $c$ is an
identity. Thus $\urep(I,\Sigma^2A)=Z^3(I,A)$, the set of normalized
$3$-cocycles of (the nerve $\ner I$ of) the category $I$ with
coefficients in the abelian group $A$.

\subsection{The bicategory of
representations}
 For any category $I$ and any tricategory $\T$, the set $\rep(I,\T)$ of representations of $I$ in $\T$
 is the set of objects of a bicategory, {\em the bicategory of
representations of $I$ in $\T$}, denoted by
\begin{equation}\label{brep} \brep(I,\T), \end{equation}
whose 1-cells are a kind of degenerated lax transformations between
representations that agree on objects. When $\T=\b$ is a bicategory, that is, when
the 3-cells are all identities, these degenerated lax
transformations has been considered in \cite{b-c,ccg,c-c-g} under
the name of {\em relative to objects lax transformations}, whereas
in \cite{garner-gurski, lack, lack-paoli} they are termed {\em
icons}, short for `identity component oplax natural
transformations'. The description of the bicategory $\brep(I,\T)$ is
as follows:

$\bullet$ {\em The cells of  $\brep(I,\T)$}. As we said above, representations $F:I\to \T$ are the 0-cells of this bicategory. For any two representations $F,G:I\to \T$, a
1-cell $\Phi:F\Rightarrow G$ may exists only if $F$ and $G$ agree on
objects, that is, $Fi=Gi$ for all $i\in\mbox{Ob}I$; and is then
given by specifying, for every arrow $a:j\to i$  in $I$, a 2-cell
$\Phi a:Fa\Rightarrow Ga$ of $\T$,  and $3$-cells
\begin{equation}\label{3phi}\begin{array}{ll}
\xymatrix@R=20pt@C=15pt{Fa\otimes Fb\ar@{=>}[r]^{F_{a,b}}\ar@{=>}[d]_{\Phi
a\otimes \Phi b}&F(ab) \ar@{=>}[d]^{\
\Phi(ab)}\ar@{}[dl]|{ {
\Phi_{a,b}}\Rrightarrow}\\
Ga\otimes Gb\ar@{=>}[r]_{G_{a,b}}&G(ab),}&\xymatrix@R=20pt@C=5pt{&1_{Fi=Gi}\ar@{=>}[ld]_{F_i}
\ar@{=>}[rd]^{G_i}\ar@{}[d]|(.55){ { \Phi_i } \cong}& \\
F1_i\ar@{=>}[rr]_{\Phi{1_i}}&&G1_i,
}
\end{array}
\end{equation}
respectively  associated to each pair of composable arrows
$k\overset{ b}\to j\overset{ a}\rightarrow i$ and each object $i$ of
the category $I$,  subject to the two coherence axioms {\bf (CR3)} and {\bf (CR4)} below.

\noindent {\bf (CR3):} for any two composable arrows  triplet of composable morphisms of
$I$, $l\overset{ c}\to k \overset{ b}\to j\overset{ a}\to i$,  the equation $E=E'$
 on $3$-cells in $\T$ holds, where:
$$
\xymatrix@R=20pt@C=20pt{(Fa\!\otimes\! Fb)\!\otimes\!Fc
\ar@{}@<-86pt>[dd]|{\textstyle E=}
\ar@{=>}[rr]^{ \boldsymbol{a}}
\ar@{=>}[rrd]^{F_{a,b}\otimes 1}
\ar@{=>}[d]_{(\Phi a\otimes \Phi b)\otimes \Phi c}&&Fa\!\otimes\!(Fb\!\otimes\! Fc)
\ar@{=>}[rr]^{1\otimes F_{b,c}}&
\ar@{}@<-25pt>[d]|{{ F_{a,b,c}}\Rrightarrow}&Fa\!\otimes\! F(bc)
\ar@{=>}[d]^{F_{a,bc}}\\
(Ga\!\otimes\! Gb)\!\otimes\! Gc \ar@{=>}[d]_{G_{a,b}\otimes 1}
\ar@{}[rr]|(.5){ { \Phi_{a,b}\otimes 1}\Rrightarrow}&&
F(ab)\!\otimes\! Fc\ar@{=>}[lld]^{\ \Phi(ab)\otimes \Phi c}\ar@{=>}[rr]^(.7){F_{ab,c}}
\ar@{}@<55pt>[d]|{ { \Phi_{ab,c}}\Rrightarrow}&& F(abc)\ar@{=>}[d]^{\Phi(abc)}\\
G(ab)\!\otimes\! Gc\ar@{=>}[rrrr]^{G_{ab,c}}&&&&G(abc),}
$$
$$
\xymatrix@R=20pt@C=20pt{
(Fa\!\otimes\! Fb)\!\otimes\!Fc
\ar@{}@<-70pt>[dd]|{\textstyle E'=}
\ar@{}[rrd]|{
\cong}\ar@{=>}[rr]^{ \boldsymbol{a}}\ar@{=>}[d]_{(\Phi
a\otimes \Phi b)\otimes \Phi c}&&Fa\!\otimes\!(Fb\!\otimes\!
Fc)\ar@{}[rd]^(.65){ {
1\otimes\Phi_{b,c}}\Rrightarrow}\ar@{=>}[d]|{\Phi a\otimes(\Phi
b\otimes\Phi c)}
\ar@{=>}[rr]^{1\otimes F_{b,c}}&&Fa\!\otimes\! F(bc)
\ar@{=>}[ld]_{\Phi a\otimes\Phi(bc)}\ar@{=>}[d]^{F_{a,bc}}\\
(Ga\!\otimes\! Gb)\!\otimes\! Gc \ar@{=>}[d]_{G_{a,b}\otimes 1}\ar@{=>}[rr]^{\boldsymbol{a}}&&
Ga\!\otimes\!(Gb\!\otimes\! Gc)\ar@{}@<-35pt>[d]|{{ G_{a,b,c}}\Rrightarrow}
\ar@{=>}[r]_-{1\otimes G_{b,c}}&Ga\!\otimes\! G(bc)\ar@{=>}[rd]_{G_{a,bc}}
\ar@{}[r]|(.6){ { \Phi_{a,bc}}\Rrightarrow}& F(abc)\ar@{=>}[d]^{\Phi(abc)}\\
G(ab)\!\otimes\! Gc\ar@{=>}[rrrr]^{G_{ab,c}}&&&&G(abc).}
$$

\noindent {\bf (CR4):} for any morphism of $I$, $j\overset{ a}\to
i$, the following two pasting equalities hold:

$$\xymatrix@R=10pt@C=10pt{ &1\otimes
Fa\ar@{}[d]|(.55){{
\widehat{F}_a}\Rrightarrow}\ar@{=>}[rd]^{\boldsymbol{l}}\ar@{=>}[ld]_{F_i\otimes
1}&&&& 1\otimes Fa\ar@{}[dddr]|(.5){\cong}
\ar@{=>}[dd]|{1\otimes \Phi a}\ar@{=>}[rd]^{\boldsymbol{l}}\ar@{=>}[ld]_{F_i\otimes 1}&\\
F1\otimes Fa
 \ar@{}[rrdd]|(.5){{
\Phi_{1,a}}\Rrightarrow}
\ar@{=>}[rr]_{F_{1,a}}\ar@{=>}[dd]_{\Phi 1\otimes \Phi a}&&Fa\ar@{=>}[dd]^{\Phi a}&&
F1\otimes Fa \ar@{}[rd]|(.55){{
\Phi_i\otimes 1}\cong}
\ar@{=>}[dd]_{\Phi 1\otimes \Phi a}&&Fa\ar@{=>}[dd]^{\Phi a}\\
&&&=&&1\otimes Ga^{}\ar@{}[d]|(.55){{
\widehat{G}_a}\Rrightarrow}\ar@{=>}[rd]^{\boldsymbol{l}}\ar@{=>}[ld]_{G_i\otimes 1}&\\
G1\otimes Ga\ar@{=>}[rr]_{G_{1,a}}&&Ga&&G1\otimes Ga\ar@{=>}[rr]_{G_{1,a}}& &Ga,}
$$
$$
\xymatrix@C=10pt@R=10pt{ &Fa\otimes 1
\ar@{}[d]|(.55){{
\widetilde{F}_a}\Rrightarrow}
\ar@{=>}[rd]^{\boldsymbol{r}^\bullet}\ar@{=>}[ld]_{1\otimes F_j}&&&&
Fa\otimes 1\ar@{=>}[dd]|{\Phi a\otimes
1}\ar@{=>}[rd]^{\boldsymbol{r}^\bullet}
\ar@{=>}[ld]_{1\otimes F_j}\ar@{}[rddd]|(.5){\cong}&\\
Fa\otimes F1 \ar@{}[rrdd]|(.5){{
\Phi_{1,a}}\Rrightarrow}
\ar@{=>}[rr]_{F_{a,1}}\ar@{=>}[dd]_{\Phi a\otimes \Phi 1}&&
Fa\ar@{=>}[dd]^{\Phi a}&&Fa\otimes F1\ar@{}[rd]|(.5){{
1\otimes \Phi_j}\cong}\ar@{=>}[dd]_{\Phi a\otimes \Phi 1}&&Fa\ar@{=>}[dd]^{\Phi a}\\
&&&=&&Ga\otimes 1
\ar@{}[d]|(.55){{
\widetilde{G}_a}\Rrightarrow}\ar@{=>}[rd]^{\boldsymbol{r}^\bullet}\ar@{=>}[ld]_{1\otimes G_j}&\\
Ga\otimes G1\ar@{=>}[rr]_{G_{a,1}}&&Ga&&Ga\otimes G1\ar@{=>}[rr]_{G_{a,1}}& &Ga.}
$$

A 2-cell $M:\Phi\Rrightarrow\Psi$, for $\Phi,\Psi:F\Rightarrow G$
two 1-cells in $\brep(I,\T)$, consists of a family of 3-cells in
$\T$, $ Ma:\Phi a\Rrightarrow \Psi a$, one for each arrow $a:j\to i$
in $I$, subject to the coherence condition {\bf (CR5)} below.

\noindent {\bf (CR5):} for any object $i$ and each two composable
arrows $k \overset{ b}\to j\overset{ a}\to i$ of $I$, the diagrams
of 3-cells below commute. $$\xymatrix@C=8pt@R=16pt{\Phi 1_i\circ
F_i\ar@3{->}[rr]^{M1_i\circ 1}\ar@3{->}[rd]_{\Phi_i}&& \Psi 1_i\circ
F_i\ar@3{->}[ld]^{\Psi_i}\\&G_i& }\hspace{0.6cm}
 \xymatrix@C=12pt@R=16pt{{G}_{\hspace{-1pt}a,b}\!\circ\!(\Phi a\!\otimes\!\Phi b)
\ar@3{->}[d]_{ 1\circ(Ma\otimes M
b)}\ar@3{->}[r]^-{\Phi_{a,b}}&\ar@3{->}[d]^{ M(ab)\otimes
1}\Phi(ab)\!\circ\! {F}_{\hspace{-1pt} a,b}\\\ar@3{->}[r]^-{
\Psi_{a,b}}{G}_{\hspace{-1pt}a,b}\!\circ\! (\Psi a\!\otimes\!\Psi
b)&\Psi(ab)\!\circ\! {F}_{\hspace{-1pt}a,b} }
$$

$\bullet$ {\em{Compositions in  $\brep(I,\T)$}}. The vertical
composition of a 2-cell $M:\Phi\Rrightarrow\Psi$ with a 2-cell
$N:\Psi\Rrightarrow\Gamma$, for $\Phi,\Psi,\Gamma:F\Rightarrow G$,
yields the 2-cell $N\cdot M:\Phi\Rrightarrow\Gamma$ which is defined
using pointwise vertical composition in the hom-bicategories of
$\T$; that is, for each $a:j\to i$ in $I$, $(N\cdot
M)a=(Na)\cdot(Ma):\Phi a\Rrightarrow\Gamma a:Fa\Rightarrow Ga$. The
horizontal composition of 1-cells $\Phi:F\Rightarrow G$ and
$\Psi:G\Rightarrow H$, for $F,G,H:I\to \T$ representations, is
$\Psi\circ\Phi:F\Rightarrow H$, where $(\Psi\circ\Phi) a=\Psi
a\circ\Phi a:Fa\Rightarrow Ha$, for each arrow $a:j\to i$ in $I$.
Its component $(\Psi\circ\Phi)_{a,b}:(\Psi\circ\Phi)a\otimes
(\Psi\circ\Phi)b\Rrightarrow \Psi(ab)\circ \Phi(ab)$, attached at a
pair of composable arrows $k \overset{ b}\to j\overset{ a}\to i$ of
the category $I$, is given by pasting in the bicategory $\T(Fk,Fi)$
the diagram
$$
\xymatrix@C=20pt@R=12pt{
Fa\otimes Fb\ar@{=>}[dd]_{(\Psi a\circ\Phi a)\otimes (\Psi b\circ\Phi b)}
\ar@{=>}[rr]^{F_{a,b}}\ar@{}[rrrd]|{{ \Phi_{a,b}}\Rrightarrow}
\ar@{=>}[dr]^{\Phi a\otimes\Phi b}&& F(ab)\ar@{=>}[rd]^{\Phi(ab)}&\\
&Ga\otimes Gb\ar@{}[l]|(.65){ \cong}\ar@{=>}[rr]^{G_{a,b}}\ar@{}[rd]|{{ \Psi_{a,b}}\Rrightarrow}
\ar@{=>}[ld]^{\Psi  a\otimes\Psi b}
&&G(ab)\ar@{=>}[ld]^{\Psi(ab)}\\
Ha\otimes Fb\ar@{=>}[rr]_{H_{a,b}}&&H(ab),&}
$$
and its component $(\Psi\circ\Phi)_i:(\Psi\circ\Phi)1_i\circ
F_i\Rrightarrow H_i$, attached at any object $i$ of $I$, is given by
pasting in $\T(Fi,Fi)$ the diagram
$$
\xymatrix@R=10pt@C=10pt{&1_{Fi=Gi=Hi}\ar@{=>}[rdd]^{H_i}\ar@{=>}[ldd]_{F_i}\ar@{=>}[dd]|{G_i}&\\
&&\\ F1_i\ar@{=>}[r]_{\Phi 1_i}&G 1_i
\ar@{}@<6pt>[ru]|(.37){{ \Psi_i}\cong}
\ar@{}@<-6pt>[lu]|(.37){{ \Phi_i}\cong}\ar@{=>}[r]_{\Psi 1_i}&H 1_i.
}
$$

The horizontal composition  of 2-cells $M:\Phi\Rrightarrow
\Psi:F\Rightarrow G$ and $N:\Gamma\Rrightarrow \Theta:G\Rightarrow
H$ in $\brep(I,\T)$
is $N\circ M:\Gamma\circ\Phi\Rrightarrow \Theta\circ \Psi$, which,
at each $a:j\to i$ in $I$, is given by the formula $(N\circ
M)a=Na\circ Ma.$

$\bullet$ {\em{Identities in  $\brep(I,\T)$}}. The identity 1-cell
of a representation $F:I\to \T$ is $1_F:F\Rightarrow F$, where
$(1_F)a=1_{Fa}$, the identity of $Fa$ in the bicategory $\T(Fi,Fi)$,
for each $a:j\to i$ in $I$. Its structure 3-cell
$(1_F)_{a,b}:F_{a,b}\circ (1_{Fa}\otimes 1_{Fa})\Rrightarrow
1_{F(ab)}\otimes F_{a,b}$,
 attached at each
pair of composable arrows $k \overset{ b}\to j\overset{ a}\to i$, is
the canonical one obtained from the identity constraints of the
bicategory $\T(Fk,Fi)$ by pasting  the diagram
$$
\xymatrix@R=25pt{Fa\otimes
Fb\ar@{=>}[r]^{F_{a,b}}\ar@{=>}[d]_{1\otimes 1}\ar@{=>}@/^2pc/[d]^1\ar@{}@<12pt>[d]|{
\cong}&F(ab)
\ar@{=>}[d]^{1}\ar@{}@<-20pt>[d]|{
\cong}\\
Fa\otimes Fb\ar@{=>}[r]_{F_{a,b}}&F(ab),}
$$
and its component attached at any object $i$ of $I$ is the left unit
constraints of the bicategory $\T(Fi,Fi)$ at $F_i:1_{F_i}\Rightarrow
F1_i$, that is, $(1_F)_i=\boldsymbol{l}:1_{F1_i}\circ F_i\cong F_i$.
The identity 2-cell $1_{\Phi}$, of a 1-cell $\Phi:F\Rightarrow G$,
is defined at any arrow $a:j\to i$ of $I$ by the simple formula
$(1_\Phi) a=1_{\Phi a}:\Phi a\Rrightarrow\Phi a $.

$\bullet$ {\em{The structure constraints in $\brep(I,\T)$}}. For any
three composable 1-cells $F\overset{\Phi}\Rightarrow
G\overset{\Psi}\Rightarrow H \overset{\Theta}\Rightarrow K$ in
$\brep(I,T)$, the component of the structure associativity
isomorphism $(\Theta\circ \Psi)\circ \Phi \cong \Theta\circ
(\Psi\circ\Phi)$, at any arrow $j\overset{a}\to i$ of the category
$I$, is provided by the associativity constraint $(\Theta a\circ
\Psi a)\circ \Phi a \cong \Theta a\circ (\Psi a\circ\Phi a)$
 of the hom-bicategory $\T(Fj,Fi)$. And similarly, the components of structure left and right
 identity isomorphisms $1_G\circ \Phi\cong \Phi$ and $\Phi\circ 1_F\cong \Phi$, at any arrow $a:j\to i$
 as above, are provided by the identity constraints $1_{Ga}\circ \Phi a\cong \Phi a$,
 and $\Phi a\circ 1_{Fa}\cong \Phi a$, of the bicategory $\T(Fj,Fi)$, respectively.

\subsection{The bicategories of unitary and homomorphic
representations} The bicategory of representations of a category $I$
in a tricategory $\T$, $\brep(I,\T)$, contains three
sub-bicategories that are of interest in our development:

The {\em bicategory of unitary representations},  denoted by
\begin{equation} \label{burep}
 \burep(I,\T),
\end{equation}
whose 0-cells are the unitary representations of $I$ in $\T$; its
1-cells are those $\Phi:F\Rightarrow G$ in $\brep(I,\T)$ that are
unitary, in the sense that $\Phi{1_i}=1_{1_{\hspace{-1.5pt}Fi}}$ and
the 3-cell $\Phi_i$ in $(\ref{3phi})$ is the canonical isomorphism
$1\circ 1\cong 1$, for all objects $i$ of $I$, and it is full on
2-cells $M:\Phi\Rrightarrow\Psi$ between such normalized 1-cells.

The {\em bicategory of homomorphic representations}, denoted by
\begin{equation} \label{bhrep}
 \bhrep(I,\T),
\end{equation}
whose 0-cells are  the homomorphic representations, its 1-cells are
those $\Phi:F\Rightarrow G$ in $\burep(I,\T)$ such that the
structure 3-cells $\Phi_{a,b}$ are all invertible, and it is full on
2-cells $M:\Phi\Rrightarrow\Psi$ between such 1-cells.

The {\em bicategory of unitary homomorphic representations}, denoted by $\buhrep(I,\T)$,  which
is defined to be the intersection of the above two, that is,
\begin{equation}\label{buhrep}\buhrep(I,\T)= \burep(I,\T)\cap
\bhrep(I,\T).
\end{equation}

\subsection{Example}\label{eta2}Let $\Sigma^2A$ be
the strict tricategory
 defined by an abelian group $A$ as in Example \ref{eta} and let $I$ be any category. Then,  the bicategory
   $\burep(I,\Sigma^2A)$ is actually a 2-groupoid whose objects  are normalized 3-cocycles  of $I$ with
 coefficients in $A$. If $F,G:\ner_3I\to A$ are two such 3-cocycles, then  a 1-cell $\Phi:F\Rightarrow G$
 is a normalized 2-cochain $\Phi:\ner_2 I\to A$ satisfying
 $$ G(a,b,c)+\Phi(b,c)+\Phi(a,bc)= F(a,b,c)+\Phi(ab,c)+\Phi(a,b),$$
 that is, $G=F+\partial \Phi$. And for any two 1-cells $\Phi,\Psi:F\Rightarrow G$ as above, a 2-cell
 $M:\Phi\Rrightarrow \Psi$ consists of a normalized 1-cochain $M:\ner_1I\to A$ such
 that $\Psi=\Phi+\partial
 M$, that is, such that
 $$\Psi(a,b)+M(a)+M(b)=M(ab)+\Phi(a,b).$$

 \subsection{Functorial proprieties of $\bhrep(I,-)$}  For a tricategory $\T$, any functor $a:J\to I$ induces a homomorphism (actually a 2-functor)
  $a^*:\bhrep(I,\T)\to \bhrep(J,\T)$, and the construction (\ref{brep}), $I\mapsto \bhrep(I,\T)$,  is
functorial on the  category $I$. For a trihomomorphism of
tricategories $H=(H,\chi,\iota,\omega,\gamma,\delta):\T\to\T'$, as
defined in \cite[Definition 3.1]{g-p-s}, we have the following
result:
\begin{lemma}\label{transtri} Let $I$ be any given small category.

(i) Every trihomomorphism $H:\T\to\T'$ gives rise to a homomorphism
$$H_*:\bhrep(I,\T)\to\bhrep(I,\T'),$$
which is natural on $I$, that is, for any functor $a:J\to I$,
 $$H_*a^*=a^*H_*:\bhrep(I,\T)\to\bhrep(J,\T').$$

 (ii) If $H:\T\to\T'$ and $H':\T'\to\T''$ are any two
composable trihomomorphisms, then there is a  pseudo-equivalence
$m:H'_*H_*\Rightarrow (H'H)_*$, such that,  for any functor $a:J\to
I$,
 the equality $ma^*=a^*m$ holds.

(iii) For any tricategory $\T$, there is a pseudo-equivalence
$m:(1_\T)_*\Rightarrow 1$, such that,  for any functor $a:J\to I$,
the equality $ma^*=a^*m$ holds.
\end{lemma}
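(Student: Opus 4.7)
The plan is to construct $H_*$ on each dimension of cell of $\bhrep(I,\T)$ by ``whiskering'' the structure data of a representation, $1$-cell, or $2$-cell against the coherence constraints $\chi,\iota,\omega,\gamma,\delta$ of the trihomomorphism $H=(H,\chi,\iota,\omega,\gamma,\delta)$, and then to verify axioms (CR1)--(CR5) by pasting the corresponding axioms for $F$, $\Phi$, $M$ with the GPS axioms (TA1) and (TA2) for $H$.

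Concretely, for a homomorphic representation $F:I\to\T$, I would set $(H_*F)i=HFi$, $(H_*F)a=HFa$, and take the structure $2$-cell $(H_*F)_{a,b}$ to be the equivalence
\[
HFa\otimes HFb\xRightarrow{\chi} H(Fa\otimes Fb)\xRightarrow{HF_{a,b}} HF(ab),
\]
with the unit $2$-cell supplied by $\iota$. The three structure $3$-cells $(H_*F)_{a,b,c}$, $\widehat{(H_*F)}_a$, $\widetilde{(H_*F)}_a$ would be built by pasting $HF_{a,b,c}$, $H\widehat{F}_a$, $H\widetilde{F}_a$ with suitable instances of $\omega,\gamma,\delta$ and the pseudo-naturality cells of $\chi$. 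With these definitions, axiom (CR1) for $H_*F$ reduces, after cancelling the naturality regions, to the image under $H$ of (CR1) for $F$ pasted with a pentagonal region built from $\omega$'s, which is exactly (TA1) for $H$; the three equations of (CR2) are handled in the same spirit using (TA2) together with $\gamma$ and $\delta$. For a $1$-cell $\Phi:F\Rightarrow G$ in $\bhrep(I,\T)$ I set $(H_*\Phi)a=H(\Phi a)$ and build $(H_*\Phi)_{a,b}$ and $(H_*\Phi)_i$ by pasting $H\Phi_{a,b}$, $H\Phi_i$ with the pseudo-naturality $3$-cells of $\chi$ and $\iota$; axioms (CR3) and (CR4) then follow by applying $H$ to the corresponding axioms for $\Phi$. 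For a $2$-cell $M$, $(H_*M)a:=H(Ma)$ satisfies (CR5) immediately through the hom-homomorphisms of $H$.

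The homomorphism structure of $H_*$ (its associator and unit constraints) is provided by the hom-homomorphism constraints of $H$, and the naturality $H_*a^*=a^*H_*$ for $a:J\to I$ is immediate because the constructed data at an arrow $c$ of $J$ only invoke the values of $F,\Phi,M$ at $a(c)$. For parts (ii) and (iii) the pseudo-equivalences $m$ are inherited from the comparison data built into the composition and identity trihomomorphisms of $\bicat$: the component $m_F:H'_*H_*F\Rightarrow (H'H)_*F$ at each arrow $a$ has as underlying $2$-cell the identity on $H'HFa$, equipped with structure $3$-cells arising from the canonical comparison between the composite coherence $\chi^{H'H}$ and the pasting of $\chi^{H'}$ with $H'\chi^H$; the pseudo-naturality data of $m$ on $1$-cells are obtained analogously. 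Part (iii) is the same construction with $1_\T$ in place of $H'H$, and in both cases $ma^*=a^*m$ holds by the pointwise nature of the construction.

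The main obstacle is combinatorial rather than conceptual: each of (CR1) and the three equations comprising (CR2) unfolds into a substantial pasting equality of $3$-cells in $\T'$, and verifying each requires carefully isolating the sub-region in which exactly one application of (TA1) or (TA2) is needed, after repeated appeals to the pseudo-naturality of $\chi$ and $\iota$ and to the coherence axioms for the hom-homomorphisms of $H$. The argument is a direct diagram chase, but the bookkeeping is heavy enough that it is most efficiently carried out using surface pasting diagrams of the kind displayed in the statement of (CR1)--(CR2).
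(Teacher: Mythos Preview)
Your proposal is correct and follows essentially the same approach as the paper: the paper defines $H_*F$, $H_*\Phi$, $H_*M$ by exactly the formulas you describe (composing with $\chi$ and $\iota$, pasting with $\omega$, $\gamma$, $\delta^{-1}$, and the pseudo-naturality of $\chi$), takes the homomorphism constraints of $H_*$ from those of the hom-homomorphisms of $H$, and for (ii) and (iii) defines $m_F$ with identity component $2$-cells and structure $3$-cells given by canonical comparison isomorphisms. The paper in fact records only the constructions and omits the detailed verification of (CR1)--(CR5), so your remarks on reducing these to (TA1)/(TA2) after isolating the naturality regions are a welcome elaboration rather than a departure.
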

\begin{proof} $(i)$ The homomorphism
$H_*$ is defined as follows: It carries a homomorphic representation
$F:I\to\T$ to the homomorphic representation $H_*F:I\to\T'$, which
is defined on objects $i$ of $I$ by $(H_*F)i=HFi$, and on arrows
$a:j\to i$ by $(H_*F)a=HFa:HFj\to HFi$. The 2-cell
$(H_*F)_{a,b}:(H_*F)a\otimes (H_*F)b\Rightarrow (H_*F)(ab)$, for
each pair of composable arrows $k\overset{ b}\to j
\overset{a}\rightarrow i$,  is the composition $ HFa\otimes
HFb\overset{\chi}\Longrightarrow H(Fa\otimes
Fb)\overset{HF_{a,b}}\Longrightarrow HF(ab)$. For each object $i$,
the 2-cell $(H_*F)_i:1_{(H_*F)i}\Rightarrow (H_*F)1_i$ is the
composite of $ 1_{HFi}\overset{\iota}\Longrightarrow
H1_{Fi}\overset{HF_i}\Longrightarrow HF1_i$.  The structure 3-cell
of $H_*F:I\to \T'$ associated to any three composable arrows
$l\overset{ c}\to k \overset{b}\to j\overset{ a}\to i$, is that
obtained by pasting the diagram
$$
\xymatrix@C=-7pt@R=6pt{&(H\!Fa\!\otimes\! H\!Fb)\!\otimes\! H\!Fc \ar@{}[rrrrrrdd]|{ {
  \omega}\Rrightarrow}
\ar@{=>}[dl]_(.6){\chi\otimes 1}\ar@{=>}^{\boldsymbol{a}}[rrrrrr]&&&&&&
H\!Fa \!\otimes\! (H\!Fb\!\otimes\! H\!Fc)\ar@{=>}[dr]^(.6){1\otimes \chi}&\\
H\!(Fa\!\otimes\! Fb)\!\otimes\!H\!Fc\ar@{=>}[rd]^(.55){\chi}
\ar@{=>}[dd]_{H\!F_{a,b}\otimes 1}
&&&&&&&&
H\!Fa\!\otimes\! H\!(Fb\!\otimes\! Fc)
\ar@{=>}[ld]_(.55){\chi}
\ar@{=>}[dd]^{1\otimes H\!F_{b,c}}
\\
&H\!((Fa\!\otimes\!Fb)\!\otimes\! Fc)\ar@{=>}[dd]|{H(F_{a,b}\otimes 1)}\ar@{=>}[rrrrrr]^{H\boldsymbol{a}}
\ar@{}[rrrrrrddd]|{  F_{a,b,c}\Rrightarrow}&&&&&&
H\!(Fa\!\otimes\!(Fb\!\otimes\! Fc))\ar@{}[rd]|{ \cong}
\ar@{=>}[dd]|{H(1\otimes F_{b,c})}&\\H\!F(ab)\!\otimes\!H\!Fc\ar@{=>}[rd]_{\chi}
\ar@{}[ru]|{ \cong}&&&&&&&&H\!Fa\otimes H\!F(bc)\ar@{=>}[ld]^{\chi}\\
&H\!(F(ab)\!\otimes\! Fc)\ar@{=>}[rrrd]_-{H\!F_{ab,c}}&&&
&&&H\!(Fa\!\otimes\!F(bc))\ar@{=>}[llld]\ar@{}@<3pt>[llld]^(.45){\ H\!F_{a,bc}}&
\\
&&&&H\!F(abc)&&&&}
$$
whereas the structure 3-cells of the representation $H_*F$ attached to
an arrow $a:j\to i$ of the category $I$, are respectively those
obtained by pasting the diagrams below.
$$
\xymatrix@C=12pt@R=5pt{
&H1_{Fi}\otimes HFa\ar@{}[ddd]|{ \cong}
\ar@{=>}[rd]_{\chi}\ar@{=>}[lddd]_{HF_i\otimes 1}
&&&1_{HFi}\otimes HFa\ar@{=>}[ddd]^{\boldsymbol{l}}\ar@{=>}[lll]_{\iota\otimes 1}\\
&&H(1_{Fi}\otimes Fa)\ar@{}[dd]|{ { H\widehat{F}_a}\Rrightarrow}
\ar@{=>}[rrdd]^{H\boldsymbol{l}}\ar@{=>}[ddl]|{H(F_i\otimes 1)}\ar@{}[rru]_(.6){ { \gamma} \Rrightarrow}&&\\
&&&&\\
HF1_i\otimes HFa\ar@{=>}[r]^{\chi}&H(F1_i\otimes Fa)\ar@{=>}[rrr]^(.65){HF_{1,a}}&&&HFa,
}
$$
$$
\xymatrix@C=12pt@R=5pt{
&HFa\otimes H1_{Fj}\ar@{}[ddd]|{ \cong}
\ar@{=>}[rd]_{\chi}\ar@{=>}[lddd]_{1\otimes HF_j}
&&&HFa\otimes 1_{HFj}\ar@{=>}[ddd]^{\boldsymbol{r}^\bullet}\ar@{=>}[lll]_{1\otimes \iota}\\
&&H(Fa\otimes 1_{Fj})\ar@{}[dd]|{ { H\widetilde{F}_a}\Rrightarrow}
\ar@{=>}[rrdd]^{H\boldsymbol{\boldsymbol{r}^\bullet}}\ar@{=>}[ddl]|{H(1\otimes F_j)}
\ar@{}[rru]_(.6){ { \delta^{-1}} \Rrightarrow}&&\\
&&&&\\
HFa\otimes HF1_j\ar@{=>}[r]^{\chi}&H(Fa\otimes F1_j)\ar@{=>}[rrr]^(.65){HF_{a,1}}&&&HFa.
}
$$

If $\Phi:F\Rightarrow G$ is any 1-cell in the bicategory
$\bhrep(I,\T)$, then $H_*\Phi:H_*F\Rightarrow H_*G$ is the 1-cell in
$\brep(I,\T')$ whose component at an arrow $a:j\to i$ of $I$ is the
2-cell of $\T'$ defined by $(H_*\Phi)a=H\Phi a:HFa\Rightarrow HGa$.
For any pair of composable arrows $k\overset{ b}\to j
\overset{a}\rightarrow i$ and any object $i$ of $I$, the
corresponding structure 3-cells (\ref{3phi}), $(H_*\Phi)_{a,b}$ and
$(H_*\Phi)_i$,  are respectively  given by pasting in
$$
\xymatrix@R=35pt@C=20pt{HFa\otimes HFb\ar@{}[rd]|{ \cong}
\ar@{=>}[r]^{\chi}\ar@{=>}[d]_{H\Phi a\otimes H\Phi b} &
H(Fa\otimes Fb)\ar@{=>}[r]^{HF_{a,b}}\ar@{=>}[d]|(.3){H(\Phi a\otimes \Phi b)}
\ar@{}[rd]|{{ H\Phi_{a,b}}\Rrightarrow}&HF(ab)\ar@{=>}[d]^{H\Phi(ab)}\\
HGa\otimes HGb\ar@{=>}[r]_{\chi}&H(Ga\otimes Gb)\ar@{=>}[r]_-{HG_{a,b}}&HG(ab),}
\xymatrix@R=12pt@C=8pt{&1_{HFi=HGi}\ar@{=>}[d]^\iota& \\ &H1_{Fi}\ar@{=>}[ld]_{HF_i}\ar@{=>}[rd]^{HG_i}
\ar@{}[d]|(.55){ { H\Phi_{i}} \cong}& \\HF1_i\ar@{=>}[rr]_{H\Phi_i}&&HG1_i.}
$$
And a 2-cell $M:\Phi\Rrightarrow \Psi$ of $\brep(I,\T)$ is applied
by the homomorphism $H_*$ to the 2-cell $H_*M:H_*\Phi\Rrightarrow
H_*\Psi$ of $\brep(I,\T')$, such that $(H_*M)a=HMa:H\Phi
a\Rrightarrow H\Psi a$ for any arrow $a:j\to i$ of the category $I$.

 Finally, if $\Phi:F\Rightarrow G$ and $\Psi:G\Rightarrow H$ are
any two composable 1-cells in $\brep(I,\T)$,
 and $F:I\to \T$ is any representation,
 then the constraints
 $(H_*\Psi)\circ (H_*\Phi)\cong H_*(\Psi\circ \Phi)$ and $1_{H_*F}\cong H_*1_F$ are,
 at any arrow $a:j\to i$ of $I$,
 the structure isomorphisms $H\Psi a\circ H\Phi a\cong H(\Psi a\circ \Phi a)$ and $1_{HFa}\cong H1_{Fa}$
 of the homomorphism $H:\T(Fj,Fi)\to \T'(HFj,HFi)$, respectively.

$(ii)$ For any homomorphic representation $F:I\to \T$, the 2-cell
attached by
$$m=m_F:H'_*(H_*F)\Rightarrow (H'H)_*F$$ at any arrow $a:j\to i$ of
$I$ is the identity, that is, $ma =1_{H'HFa}$. For any pair of
composable arrows $k\overset{ b}\to j \overset{a}\rightarrow i$ and
any object $i$ of $I$, the corresponding invertible structure
3-cells (\ref{3phi}),
$$\begin{array}{l}m_{a,b}:(H'_*(H_*F))_{a,b}\circ (ma\otimes
mb)\Rrightarrow m(ab)\circ ((H'H)_*F)_{a,b},\\ m_i:m1_i\circ
(H'_*(H_*F))_i\Rrightarrow ((H'H)_*F)_i,\end{array}$$ are,
respectively, given by pasting in the diagrams below.
$$
\xymatrix@C=35pt{
H'HFa\otimes H'HFb\ar@{}[rd]|{ \cong}\ar@{=>}[r]^\chi\ar@{=>}[d]_{1\otimes 1}&H'(HFa\otimes HFb)\ar@{=>}[rd]^{H'\chi}\ar@{=>}[d]_{1}\ar@{=>}[rr]^{H'(HF_{a,b}\circ \chi)}&&H'HF(ab)\ar@{=>}[d]^{1} \\
H'HFa\otimes H'HFb\ar@{=>}[r]_\chi&H'(HFa\otimes HFb)\ar@{=>}[r]_{H'\chi}\ar@{}@<4pt>[ru]|(.3){ \cong}&H'H(Fa\otimes Fb)\ar@{=>}[r]_-{H'HF_{a,b}}\ar@{=>}[ru]^{H'HF_{a,b}}\ar@{}[u]|{ \cong}&H'HF(ab),\ar@{}@<-4pt>[lu]|(.3){ \cong}
}
$$
$$
\xymatrix@C=35pt{
1_{H'HFi}\ar@{=>}[r]^\iota\ar@{=>}[rd]_{\iota}&H'1_{HFi}\ar@{}[ld]|(.3){ \cong}\ar@{=>}[rd]^{H'\iota}\ar@{=>}[d]_{1}\ar@{=>}[rr]^{H'(HF_{i}\circ \iota)}&&H'HF1_i\ar@{=>}[d]^{1} \\
 &H'1_{HFi}\ar@{=>}[r]_{H'\iota}\ar@{}@<4pt>[ru]|(.3){ \cong}&H'H1_{Fi}\ar@{=>}[r]_-{H'HF_{i}}\ar@{=>}[ru]^{H'HF_{i}}\ar@{}[u]|{ \cong}&H'HF1_i.\ar@{}@<-4pt>[lu]|(.3){ \cong}
}
$$

 If $\Phi:F\Rightarrow G$ any 1-cell in $\brep(I,\T)$, then the invertible naturality 2-cell $$m_\Phi: m_G\circ (H'_*(H_*\Phi))\Rrightarrow (H'H)_*\Phi\circ m_F,$$  at any arrow $a:j\to i$ of $I$, is provided by the canonical isomorphism $1\circ H'H\Phi a\cong H'H\Phi a\circ 1$ in the bicategory $\T''(H'HFj,H'HFi)$.

$(iii)$  For any representation $F:I\to \T$, the 2-cell attached by
$$m=m_F:(1_\T)_*F\Rightarrow F$$ at any arrow $a:j\to i$ of
$I$ is the identity, that is, $ma =1_{Fa}$. For any pair of
composable arrows $k\overset{ b}\to j \overset{a}\rightarrow i$ and
any object $i$ of $I$, the corresponding invertible structure
3-cells (\ref{3phi}), $$\begin{array}{l}m_{a,b}:F_{a,b}\circ
(ma\otimes mb)\Rrightarrow m(ab)\circ ((1_\T)_*F)_{a,b},\\
m_i:m1_i\circ ((1_\T)_*F)_i\Rrightarrow F_i,\end{array}$$ are,
respectively, the canonical isomorphisms in the diagrams below.
$$
\xymatrix@R=20pt@C=20pt{ Fa\otimes Fb\ar@{=>}[r]^{1}\ar@{=>}[d]_{1\otimes 1}\ar@{}[rrd]|{ \cong}&Fa\otimes Fb\ar@{=>}[r]^{F_{a,b}}&F(ab)\ar@{=>}[d]^{1}\\
Fa\otimes Fb\ar@{=>}[rr]_{F_{a,b}}&&F(ab),}\hspace{0.4cm} \xymatrix{&1_{Fi}\ar@{}[d]|{ \cong}\ar@{=>}[ld]_{1}\ar@{=>}[rd]^{F_i}&\\1_{Fi}\ar@{=>}[r]_{F_i}&F1_i\ar@{=>}[r]_{1}&F1_i.}
$$

If $\Phi:F\Rightarrow G$ any 1-cell in $\brep(I,\T)$, then the
invertible naturality 2-cell $$m_\Phi: m_G\circ
((1_\T)_*\Phi))\Rrightarrow \Phi\circ m_F,$$ at any arrow $a:j\to i$
of $I$, is provided by the canonical isomorphism $1\circ \Phi a\cong
\Phi a\circ 1$ in the bicategory $\T(Fj,Fi)$.
\end{proof}

\subsection{Representations of free categories}

Let us now replace category $I$ above by a (directed)
graph $\mathcal{G}$. For any tricategory $\T$, there is a bicategory of {\em representations of $\mathcal{G}$ in $\T$}, denoted by
\begin{equation}\label{gph}\brep(\mathcal{G},\T),
\end{equation}
where a 0-cell  $f:\mathcal{G}\to \T$, consists of a pair of maps
that assign an object $fi$ to each vertex $i\in \mathcal{G}$ and a
1-cell $fa:fj\to fi$ to each edge $a:j\to i$ in $\mathcal{G}$,
respectively. A 1-cell $\phi:f\Rightarrow g$ may exist only if $f$
and $g$ agree on vertices, that is, $fi=gi$ for all $i\in
\mathcal{G}$; and then it consists of a map that assigns to each
edge  $a:j\to i$ in the graph a 2-cell $\phi a:fa\Rightarrow ga$ of
$\T$. And a 2-cell $m:\phi\Rrightarrow\psi$, for
$\phi,\psi:f\Rightarrow g$ two 1-cells as above, consists of a
family of 3-cells in $\T$, $ ma:\phi a\Rrightarrow \psi a$, one for
each arrow $a:j\to i$ in $I$. Compositions in
$\brep(\mathcal{G},\T)$ are defined in the natural way by the same
rules as those stated above for the bicategory of representations
of a category in a tricategory.

Suppose now that $I(\mathcal{G})$ is the free category generated by
the graph $\mathcal{G}$ \cite{mac}. Then, restriction to the basic
graph gives a 2-functor
\begin{equation}\label{res} R: \brep(I(\mathcal{G}),\T)\to \brep(\mathcal{G},\T),
\end{equation}
and we shall prove the following auxiliary statement to be used
later:

\begin{lemma}\label{rufc} Let $I=I(\mathcal{G})$ be the free category generated by a graph $\mathcal{G}$.
Then, for any tricategory $\T$, there are a homomorphism
\begin{equation}\label{ext}L:\brep(\mathcal{G},\T)\to   \brep(I,\T), \end{equation}
and a lax transformation
\begin{equation}\label{v} \v:LR\Rightarrow 1_{\brep(I,\T)},\end{equation}
such that the following facts hold:
\begin{enumerate}
\item[(a)] $RL=1_{\brep(\mathcal{G},\T)}$,\  $\v L=1_{L}$,\
$R\v=1_{R}$.
\item[(b)] The image of $L$ is contained into the sub-bicategory $\buhrep(I,\T)\subseteq \brep(I,\T)$.
\item[(c)]
The restricted homomorphisms of $L$ and $R$ establish biadjoint
biequivalences
\begin{eqnarray}\label{lrg} \xymatrix@C=20pt{\brep(\mathcal{G},\T)\ar@<3pt>[r]^{L}\ar@{}[r]|{\sim}&
 \ar@<3pt>[l]^{R}\bhrep(I,\T)},\\
\label{lrg2}
\xymatrix@C=20pt{\brep(\mathcal{G},\T)\ar@<3pt>[r]^{L}\ar@{}[r]|(.48){\sim}&
\ar@<3pt>[l]^{R}\buhrep(I,\T)},\end{eqnarray} whose respective
unit is the identity $1:1\Rightarrow RL$, the counit is given by
the corresponding restriction of $\v:LR\Rightarrow 1$, and whose
triangulators are the canonical modifications $1\cong 1\circ 1
=\v L\circ L1$ and $R\v\circ 1 R=1\circ 1\cong 1$, respectively.
\end{enumerate}
\end{lemma}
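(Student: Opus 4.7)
The plan is to build $L$ by freely extending along the canonical inclusion $\mathcal{G}\hookrightarrow I=I(\mathcal{G})$, using only coherence cells of $\T$ for all required structure data. Recall that a non-identity morphism of $I$ is a composable word $\underline a=(a_1,\ldots,a_n)$ of edges of $\mathcal{G}$, with composition given by concatenation. I would define $L$ on 0-cells by setting $(Lf)i=fi$, $(Lf)1_i=1_{fi}$ and, on a non-identity word,
\[
(Lf)(a_1,\ldots,a_n)\;=\;fa_1\otimes\bigl(fa_2\otimes(\cdots\otimes fa_n)\bigr),
\]
i.e.\ the right-bracketed tensor, with $(Lf)_i=1$ so that $Lf$ is unitary. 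The composition structure 2-cell $(Lf)_{\underline a,\underline b}$ is taken to be the canonical coherence equivalence rebracketing to the right (specializing to $\boldsymbol r^\bullet$ or $\boldsymbol l$ when a factor is an identity), and the structure 3-cells $F_{\underline a,\underline b,\underline c}$, $\widehat F_{\underline a}$, $\widetilde F_{\underline a}$ are the unique canonical coherence 3-cells between the two resulting pasting diagrams. That axioms (CR1)--(CR2) then hold follows from Gurski's coherence theorem for tricategories, which says that any two parallel canonical pastings of coherence cells in $\T$ agree; the same theorem shows that $Lf$ is homomorphic and unitary, giving clause~(b).

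Extending $L$ to 1- and 2-cells is analogous: for $\phi:f\Rightarrow g$ set $(L\phi)(a_1,\ldots,a_n)=\phi a_1\otimes(\phi a_2\otimes\cdots\otimes\phi a_n)$ with $(L\phi)_{\underline a,\underline b}$ and $(L\phi)_i$ the canonical coherence 3-cells, and for $m:\phi\Rrightarrow\psi$ set $(Lm)(\underline a)$ to be the iterated horizontal composite of the $ma_k$. Axioms (CR3)--(CR5) and the pseudofunctoriality constraints of $L$ are once more instances of coherence in the hom-bicategories of $\T$. Restricting a right-bracketed word of length one returns the single edge, so $RL=1_{\brep(\mathcal{G},\T)}$ strictly, establishing the first equality in~(a).

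For the lax transformation $\v:LR\Rightarrow 1_{\brep(I,\T)}$, at a representation $F:I\to\T$ define $\v F:LRF\Rightarrow F$ by
\[
(\v F)(a_1,\ldots,a_n)\colon Fa_1\otimes(Fa_2\otimes\cdots\otimes Fa_n)\;\Longrightarrow\;F(a_1\cdots a_n),
\]
built by iterating the structure 2-cells $F_{a,b}$ from the inside out (reduce $Fa_{n-1}\otimes Fa_n$ via $F_{a_{n-1},a_n}$, then proceed); in particular $(\v F)(a)=1_{Fa}$ on a single edge and $(\v F)(1_i)=F_i$ on empty words. The 3-cell data $(\v F)_{\underline a,\underline b}$ and $(\v F)_i$ are the corresponding pasting instances of $F_{a,b,c}$, $\widehat F_a$, $\widetilde F_a$, so that axioms (CR3)--(CR4) for $\v F$ follow directly from (CR1)--(CR2) for $F$. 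The naturality components $\v_\Phi$ at 1-cells $\Phi:F\Rightarrow G$ of $\brep(I,\T)$ are produced analogously from the $\Phi_{a,b}$, and the lax transformation axioms of $\v$ reduce once more to (CR3)--(CR4). Since $(\v F)(a)=1_{Fa}$ on single edges, $R\v=1_R$; and when $F=Lf$ the structure cells $(Lf)_{a,b}$ applied to the right-bracketed words involved in the iterative construction are all identities, so $(\v Lf)(\underline a)=1$ and $\v L=1_L$, finishing~(a).

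Finally, for~(c), the restrictions of $L$ and $R$ between $\brep(\mathcal{G},\T)$ and $\bhrep(I,\T)$ (respectively $\buhrep(I,\T)$) form a biadjoint biequivalence: the unit is the identity $1:1\Rightarrow RL$ forced by $RL=1$; the counit is the restriction of $\v$, which is an object-wise pseudo-equivalence because at a homomorphic $F$ each $(\v F)(\underline a)$ is a vertical/horizontal composite of the invertible structure 2-cells $F_{a,b}$ and hence invertible; and the triangulators are the canonical coherence isomorphisms $1\cong 1\circ 1=\v L\circ L1$ and $R\v\circ 1R=1\circ 1\cong 1$. The biadjunction triangle identities again reduce to coherence in the hom-bicategories of $\T$. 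The main obstacle is not conceptual but combinatorial: one must verify that the canonical coherence fillers chosen for the 3-cell data of $Lf$ and $\v F$ really do satisfy the many-faced pentagon and unit axioms (CR1)--(CR2), and that these choices are compatible with the pseudo-naturality 3-cells of $\v$. The escape route is uniform: invoke Gurski's coherence theorem, which guarantees that any two parallel pastings of canonical coherence cells in $\T$ coincide.
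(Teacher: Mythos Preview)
Your approach coincides with the paper's: both define $L$ via right-bracketed iterated tensor, take the structure constraints of $Lf$ to be canonical tricategory coherence cells, build each component of $\v F$ by iterating the $F_{a,b}$, and deduce~(c) from the invertibility of all these data when $F$ is homomorphic. The only difference is presentational: the paper writes out every structure $2$- and $3$-cell by explicit recursion on word length (and does the same for the pseudo-naturality cells $\v_\Phi$), leaving the axiom checks as ``straightforward verification'', whereas you give a higher-level description and appeal to coherence.

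One correction to your closing paragraph: Gurski's coherence theorem is the right tool for the axioms of $Lf$, whose structure cells are genuine coherence pastings in $\T$, but it does \emph{not} apply to the $3$-cell data of $\v F$ or to the pseudo-naturality cells $\v_\Phi$. Those are pastings involving $F_{a,b,c}$, $\widehat F_a$, $\widetilde F_a$, and $\Phi_{a,b}$ --- data of the particular representation $F$ (respectively of $\Phi$), not coherence cells of $\T$. You had this right two sentences earlier (``(CR3)--(CR4) for $\v F$ follow directly from (CR1)--(CR2) for $F$''), and that is exactly how the verification goes; the proposed ``uniform escape route'' via tricategory coherence alone is an overstatement. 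Likewise, for part~(c) you should record not only that each $(\v F)(\underline a)$ is an equivalence but also that the $3$-cells $(\v F)_{\underline a,\underline b}$ and the modifications $\v_\Phi$ are invertible when $F,\Phi$ lie in $\bhrep(I,\T)$; this is what the paper uses to conclude that the restricted $\v$ is a genuine pseudo-equivalence, not merely an object-wise one.
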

\begin{proof} To describe the homomorphism $L$, we shall use the following useful
construction: For any list $(t_0,\dots,t_p)$ of objects in the
tricategory $\T$, let
\begin{equation}\label{multen}
\xymatrix{\overset{_\mathrm{or}}\otimes:\mathcal{T}(t_1,t_0)\times\mathcal{T}(t_2,t_1)\times\cdots\times\mathcal{T}(t_p,t_{p-1})
\longrightarrow \T(t_p,t_0)}
\end{equation}
denote the homomorphism recursively defined as the composite
$$
\xymatrix{
\prod\limits_{i=1}^{p}\T(t_i,t_{i-1})=\T(t_1,t_0)\times \prod\limits_{i=2}^{p}\T(t_i,t_{i-1})
\ar[r]^-{1\times \overset{_\mathrm{or}}\otimes}&\T(t_1,t_0)\times
\T(t_p,t_2)\ar[r]^-{\otimes}&\T(t_p,t_0).}
$$
That is, $\overset{_\mathrm{or}}\otimes$ is the homomorphism
obtained by iterating composition in the tricategory, which acts on
$0$-cells, $1$-cells and $2$-cells of the product bicategory
$\prod_{i=1}^{p}\T(t_{i},t_{i-1})$ by the recursive formula
$$
\overset{_\mathrm{or}}\otimes(x_1,\dots,x_p)=\left\{\begin{array}{lll}x_1&\text{if}& p=1,\\[5pt]
x_1\otimes\big(\overset{_\mathrm{or}}\otimes(x_2,\dots,x_p)\big)&
\text{if}&p\geq 2.\end{array}\right. $$

$\bullet$ {\em{The definition of $L$ on $0$-cells} }.  The
homomorphism $L$ takes a  representation of the graph in the
tricategory, say $f:\mathcal{G}\to \T$, to the unitary homomorphic
representation of the free category
$$L(f)=F:I\to \T,$$ such that
\begin{equation}\label{lf0} Fi=fi, \hspace{0.3cm}\text{for any
vertex $i$ of $\mathcal{G}$ (= objects of
$I$)},\hspace{2cm}\end{equation} and associates to  strings $
a:\,a(p)\overset{ a_{p}}\longrightarrow \cdots
\overset{a_{2}}\longrightarrow a(1)\overset{ a_{1}}\longrightarrow
a(0) $ of adjacent edges in $\mathcal{G}$  the 1-cells of $\T$
\begin{equation}\label{lf1}\xymatrix{Fa=
\overset{_\mathrm{or}}\otimes(fa_1,\dots,fa_p):fa(p)\to
fa(0).}\end{equation} The structure 2-cells  $F_{a,b}:Fa\otimes Fb
\Rightarrow F(ab)$, for any pair of strings in the graph,
$a=a_1\cdots a_p$ as above and $ b=b_1\cdots b_q$ with $b(0)=a(p)$,
are canonically obtained from the associativity constraint in the
tricategory: first by taking $F_{a_1,b}=1_{F(a_1b)}$ and then,
recursively for $p>1$, defining $F_{a,b}$ as the composite
$$\xymatrix@C=35pt{F_{a,b}:\ Fa\otimes Fb
\overset{ \boldsymbol{a}}\Longrightarrow Fa_1\otimes(
Fa'\otimes Fb)\ar@{=>}[r]^-{1\otimes F_{a',b} } &F(ab),}
$$
where $a'=a_2\cdots a_p$ (whence $Fa=Fa_1\otimes Fa'$). And the structure
3-cells  $F_{a,b,c}$, for any three strings in the graph
$a$, $b$ and $c$ as above with $a(p)=b(0)$ and $b(q)=c(0)$, are the unique
isomorphisms constructed from the tricategory coherence 3-cells
$\boldsymbol{\pi}$. For a particular construction of these isomorphisms, we can
first take each $F_{a_1,b,c}$ to be the canonical isomorphism
$$
\xymatrix@R=20pt@C=10pt{(Fa_1\otimes Fb)\otimes
Fc\ar@{}@<-63pt>[d]|{
F_{a_1,b,c}:}\ar@{}[drrrr]|{ \cong}\ar@{=>}[d]_{1\otimes
1}\ar@{=>}[rrrr]^{\boldsymbol{a}}&&&&Fa_1\otimes
(Fb\otimes Fc)\ar@{=>}[d]^{1\otimes F_{b,c}}\\F(a_1b)\otimes
Fc\ar@{=>}[r]^-{ \boldsymbol{a}}& Fa_1\otimes (Fb\otimes
Fc)\ar@{=>}[rr]^(.6){1\otimes F_{b,c}}&
&F(a_1bc)&\ar@{=>}[l]_{1}Fa_1\otimes F(bc), }
$$
and then, recursively for $p>1$, take $F_{a,b,c}$ to be the 3-cell
canonically obtained from $F_{a_2\cdots a_p,b,c}$ by pasting the
diagram bellow, where, as above, we write $a'$ for $a_2\cdots a_p$.
$$
\xymatrix@C=-28pt@R=4pt{&(Fa\!\otimes\! Fb)\!\otimes\!
Fc\ar@{=>}[rrdd]^{ \boldsymbol{a}}
\ar@{=>}[ldd]_{ \boldsymbol{a}\otimes 1}&&\\
&&&\\
(Fa_1\!\otimes\! (Fa'\!\otimes\! Fb))\!\otimes\!
Fc\ar@{}[rrr]|{ { \boldsymbol{\pi}}
\Rrightarrow}\ar@{=>}[rd]^(.55){\boldsymbol{a}}\ar@{=>}[ddd]|{(1\otimes
F_{a',b})\otimes 1} \ar@{}@<-50pt>[ddd]_{\textstyle
F_{a,b,c}:}&&&Fa\!\otimes\! (Fb\!\otimes\!
Fc)\ar@{=>}[ldd]_{\boldsymbol{a}}\ar@{=>}[rdd]^(.55){1\otimes F_{b,c}}\\
& Fa_1\!\otimes\! ((Fa'\!\otimes\! Fb)\!\otimes\! Fc)
\ar@{=>}[rd]^(.6){1\otimes\boldsymbol{a}}\ar@{=>}[ddd]|{1\otimes(F_{a',b}\otimes 1)}&&\\
&&Fa_1\!\otimes\!( Fa'\!\otimes\! (Fb\!\otimes\! Fc))
\ar@{=>}[rdd]^{1\otimes(1\otimes F_{b,c})}\ar@{}[rr]|(.6){
\cong}
\ar@{}[ll]|(.75){ \cong}&&\hspace{0.5cm}Fa\!\otimes\! F(bc)\ar@{=>}[ldd]^{ \boldsymbol{a}}\\
F(ab)\!\otimes\! Fc\ar@{=>}[rd]_(.4){
\boldsymbol{a}}\ar@{}@<5pt>[rrr]_(.6)
{ { 1\otimes F_{a',b,c}}\Rrightarrow}&&&\\
&Fa_1\!\otimes\!(F(a'b)\!\otimes\! Fc)\ar@{=>}[rd]_{1\otimes
F_{a'b,c}}&&Fa_1\!\otimes\!(Fa'\!\otimes\! F(bc))
\ar@{=>}[ld]^{\ \ 1\otimes F_{a',bc}}\\
&& F(abc)& }
$$

Note that, since  all structure 2-cells $F_{a,b}$  are equivalences
in the corresponding hom-bicategories of $\T$ in which they lie, as
well as all the structure 3-cells $F_{a,b,c}$ are invertible, the so
defined unitary representation $F:I\to \T$ is actually a homomorphic
one; that is, $L(f)=F\in \buhrep(I,\T)\subseteq \brep(I,\T)$.

\vspace{0.2cm} $\bullet$ {\em{The definition of $L$ on $1$-cells} }.
Any 1-cell $\phi:f \Rightarrow g$ of $\brep(\mathcal{G},\T)$, is
taken by $L$
 to the 1-cell in  $\buhrep(I,\T)$
$$L(\phi)=\Phi:F\Rightarrow G,
$$
consisting of the 2-cells in $\T$
\begin{equation}\label{lph1}\xymatrix{\Phi a=
\overset{_\mathrm{or}}\otimes(\phi{a_1},\dots,\phi{a_p}):Fa
\Rightarrow Ga},\end{equation} attached to the strings  $ a=a_1\cdots a_p$ of
adjacent edges in the graph. The structure (actually invertible)
3-cells $\Phi_{a,b}$, for any pair of strings in the graph, $a$ and
$b$ with $b(0)=a(p)$ as above, are defined by induction on the
length of $a$ as follows: each $\Phi_{a_1,b}$ is the canonical
isomorphism
$$
\xymatrix@R=20pt{Fa_1\otimes Fb\ar@{=>}[r]^{1}\ar@{}[rd]|{
\cong}\ar@{}@<-60pt>[d]|{\textstyle \Phi_{a_1,b}:}\ar@{=>}[d]_{\Phi
a_1\otimes \Phi b}& F(a_1b)\ar@{=>}[d]^{\Phi(a_1b)=\Phi{a_1}\otimes
\Phi{b}}\\Ga_1\otimes Gb\ar@{=>}[r]^{1}&G(a_1b),}
$$
and then, for $p>1$, each $\Phi_{a,b}$ is recursively obtained from
$\Phi_{a',b}$, where $a'=a_2\cdots a_p$, by pasting
$$
\xymatrix@R=25pt{Fa\otimes Fb \ar@{}@<35pt>[d]|{ \cong}
\ar@{}@<-60pt>[d]|{\textstyle \Phi_{a,b}:} \ar@{=>}[d]_{\Phi
a\otimes\Phi b} \ar@{=>}[r]^-{ \boldsymbol{a}}&
Fa_1\otimes (Fa'\otimes Fb) \ar@{}@<55pt>[d]|{
{ 1\!\otimes\! \Phi_{a'\!,b}}\Rrightarrow}
\ar@{=>}[d]|{\Phi{a_1} \otimes (\Phi{a'}\otimes \Phi{b})}
\ar@{=>}[r]^-{1\otimes F_{a'\!,b}}&F(ab)\ar@{=>}[d]^{\Phi(ab)=\Phi{a_1}\otimes (\Phi{a'}\otimes\Phi{b})}\\
Ga\otimes Gb\ar@{=>}[r]_-{ \boldsymbol{a}}& Ga_1\otimes
(Ga'\otimes Gb)\ar@{=>}[r]_-{1\otimes G_{a'\!,b}}&G(ab).}
$$

Note that, since  all structure 3-cells $\Phi_{a,b}$  are
invertible, the thus defined unitary 1-cell $\Phi$ of $\burep(I,\T)$
is actually a 1-cell of $\buhrep(I,\T)$, that is, $L(\phi)\in
\buhrep(I,\T)$.

\vspace{0.2cm} $\bullet$ {\em{The definition of $L$ on $2$-cells} }.
For $\phi,\, \psi :f \Rightarrow g$, any two 1-cells in
$\brep(\mathcal{G},\T)$, the homomorphism $L$ on a 2-cell
$m:\phi\Rrightarrow \psi$ gives the 2-cell of $\buhrep(I,\T)$
$$L(m)=M:\Phi\Rrightarrow\Psi,
$$
consisting of the 3-cells in $\T$
\begin{equation}\label{lm1}\xymatrix{Ma=
\overset{_\mathrm{or}}\otimes(m{a_1},\dots,m{a_p}):\Phi a
\Rightarrow \Psi a},\end{equation} for the  strings  $ a=a_1\cdots a_p$  of
adjacent edges in the graph $\mathcal{G}$.

\vspace{0.2cm}$\bullet$ {\em{The structure constraints of $L$}}.
If $\phi:f\Rightarrow g$ and $\psi: g\Rightarrow h$  are 1-cells in
$\burep(\mathcal{G},\T)$,  then the structure isomorphism in $\burep(I,T)$
$$L_{\psi,\phi}
: L(\psi)\circ L(\phi) \cong L(\psi\circ \phi),$$ at each string $ a=a_1\cdots a_p$ as above, is recursively
defined as the identity 3-cell
on $\psi{a_1}\circ \phi{a_1}$ if $p=1$, while, for $p>1$,
$L_{\psi,\phi}\,{a}:L(\psi){a}\circ L(\phi){a}\Rrightarrow
L(\psi\circ\phi){a}$ is obtained from $L_{\psi,\phi}\,{a'}$, where
$a'=a_2\cdots a_p$, as the composite
\begin{multline}\nonumber L(\psi)a\circ L(\phi)a=(\psi{a_1}\otimes L(\psi){a'})
\circ (\phi{a_1}\otimes L(\phi){a'})\cong
(\psi{a_1}\circ\phi{a_1})\otimes (L(\psi){a'}\circ L(\phi){a'})\\
\overset{1\otimes L_{\psi,\phi}{a'}}\Rrightarrow
(\psi{a_1}\circ\phi{a_1})\otimes
L(\psi\circ\phi){a'}=L(\psi\circ\phi)a.\end{multline}

And, similarly, the structure isomorphism
$$L_f:1_{L(f)}\cong L(1_f) $$
consists of the 3-cells $L_f\,{a}:1_{L(f)a}\Rrightarrow L(1_f){a}$,
where $L_f{a_1}=1:1_{fa_1}\Rrightarrow 1_{fa_1}$ and, for $p>1$,
$L_f{a}$ is recursively obtained from $L_f{a'}$, $a'=a_2\cdots a_p$,
as the composite
$$ 1_{L(f)a}=1_{fa_1\otimes L(f)a'}\cong 1_{fa_1}\otimes
1_{L(f)a'}\overset{1\otimes L_f{a'}}\Rrightarrow 1_{fa_1}\otimes
L(1_f){a'}=L(1_f)a.$$
This completes the description of the homomorphism $L$.

\vspace{0.2cm} $\bullet$ {\em{The definition of lax transformation
$\v$}}. The component of this lax transformation at a representation
$F:I\to \T$, $\v=\v(F):LR(F)\Rightarrow F$, is defined on identities
by
$$
\v1_i=F_i:1_{Fi}\Rightarrow F1_i,
$$
for any vertex $i$ of $\mathcal{G}$, and it associates to each
string of adjacent edges in the graph $a=a_1\cdots a_p$ the 2-cell
\begin{equation}\label{va}\xymatrix{\v a:\overset{_\mathrm{or}}\otimes(Fa_1,\dots,Fa_p)\Rightarrow
Fa,}\end{equation} which is given by taking $\v{ a_1}=1_{Fa_1}$ if
$p=1$, and then, recursively for $p>1$, by taking
 $\v{a}$ as the composite
$$\xymatrix@C=35pt{\v a:\ \overset{_\mathrm{or}}\otimes(Fa_1,\dots,Fa_p)
\ar@{=>}[r]^-{1\otimes \v{a'} } &Fa_1\otimes
Fa'\ar@{=>}[r]^-{F_{a_1,a'}}&Fa,}
$$
where $a'=a_2\cdots a_p$.

The structure 3-cell
\begin{equation}\label{vab}\v_{a,b}: F_{a,b}\circ (\v a\otimes \v b)\Rrightarrow \v(ab)\circ
LR(F)_{a,b},\end{equation} for any pair of composable morphisms in
$I$, is defined as follows: when $a=1_i$ or $b=1_j$ are identities,
then $\v_{1_i,b}$ and $\v_{a,1_j}$ are respectively given  by pasting
the diagrams
$$
\xymatrix@C=0pt@R=15pt{ &1_{Fi}\otimes LR(F)b\ar@{=>}[rr]^{\boldsymbol{l}}
\ar@{=>}[rd]^{1\otimes \v b}\ar@{=>}[dd]_{F_i\otimes\v b}&&LR(F)b\ar@{=>}[dd]^{\v b}\\
 \v_{1_i,b}:&\ar@{}[r]|(.4){\cong}&1_{Fi}\otimes Fb\ar@{=>}[rd]^{\boldsymbol{l}}
 \ar@{=>}[ld]_{F_i\otimes 1}\ar@{}[ru]|{\cong}\ar@{}[d]|{{\widehat{F}_b}\Rrightarrow}& \\
&F1_i\otimes Fb\ar@{=>}[rr]_{F_{1_i,b}}&&Fb, } \hspace{0.3cm}
\xymatrix@C=0pt@R=15pt{ &LR(F)a\otimes 1_{Fj}\ar@{=>}[rr]^{\boldsymbol{r}^\bullet}
\ar@{=>}[rd]^{\v a\otimes 1}\ar@{=>}[dd]_{\v a\otimes F_j}&&LR(F)a\ar@{=>}[dd]^{\v a}\\
\v_{a,1_j}:&\ar@{}[r]|(.4){\cong}&Fa\times 1_{Fj}\ar@{=>}[rd]^{\boldsymbol{r}^\bullet}
\ar@{=>}[ld]_{1\otimes F_j}\ar@{}[ru]|{\cong}\ar@{}[d]|{{\widetilde{F}_a}\Rrightarrow}& \\
&Fa\otimes F1_j\ar@{=>}[rr]_{F_{a,1_i}}&&Fa, }
$$
and, for strings  $a$ and $b$ in the graph with $b(q)=a(0)$, $\v_{a,b}$
is defined by induction on the length of $a$ by taking
$\v_{a_1\!,b}$ to be the canonical isomorphism
$$
\xymatrix@C=15pt{Fa_1\otimes LR(F)b\ar@{=>}[d]_-{1\otimes \v b}\ar@{}[rd]|(.5){
\cong}\ar@{}@<-2cm>[d]|{ \textstyle\v_{a_1,b}:}\ar@{=>}[r]^-{1}&
LR(F)(a_1b)\ar@{=>}[d]^{\v(a_1b)=F_{a_1\!,b}\circ (1\otimes \v b)}\\
Fa_1\otimes Fb\ar@{=>}[r]_-{F_{a_1,b}}&F(a_1b),}
$$
and then, for $p>1$, $\v_{a,b}$ is recursively obtained from
$\v_{a'\!,b}$, where $a'=a_2\cdots a_p$, by pasting
$$
\xymatrix@C=40pt{LR(F)a\otimes LR(F)b \ar@{=>}[r]^-{
\boldsymbol{a}}\ar@{=>}[d]_{(1\otimes \v a')\otimes\v b}
\ar@{}@<-2.5cm>[dd]|-{\textstyle \v_{a,b}:}&Fa_1\!\otimes\!(LR(F)a'\!\otimes\! LR(F)b)
\ar@{}[rd]|{ { 1\otimes \v_{a'\!,b}}\Rrightarrow}\ar@{=>}[d]|{1\otimes (\v a'\otimes\v b)}
\ar@{=>}[r]^-{1\otimes LR(F)_{a'\!,b}}\ar@{}[ld]|{ \cong}&LR(F)(ab)\ar@{=>}[d]^{1\otimes \v (a'b)}\\
(Fa_1\!\otimes\! Fa')\!\otimes\! Fb\ar@{}[rrd]|{ { F_{a_1\!,a'\!,b}}\Rrightarrow}
\ar@{=>}[r]^-{\boldsymbol{a}}\ar@{=>}[d]_{F_{a_1\!,a'}\otimes 1}&Fa_1\!\otimes\!(Fa'\!\otimes \!Fb)
\ar@{=>}[r]_{1\otimes F_{a'\!,b}}&Fa_1\otimes F(a'b)\ar@{=>}[d]^{F_{a_1\!,a'b}}\\
Fa\otimes Fb\ar@{=>}[rr]^(.4){F_{a,b}}&& F(ab). }
$$
And the structure 3-cell
\begin{equation}\label{vi} \v_i:\v 1_i\circ LR(F)_i\Rrightarrow F_i,\end{equation}
for any vertex $i$ of the graph, is the canonical isomorphism
$F_i\circ 1\cong F_i$.

The naturality component of $\v$ at a 1-cell $\Phi:F\Rightarrow G$
in $\burep(I,\T)$,
\begin{equation}\label{vphi}\v_\Phi:\v(G)\circ LR(\Phi)\Rrightarrow \Phi\circ \v(F),\end{equation}
is given on identities by
$$\xymatrix@C=30pt@R=25pt{ \ar@{}@<-25pt>[d]|{\v_\Phi
1_i:}1_{Fi}\ar@{=>}[r]^{F_i}\ar@{=>}[d]_{1}\ar@{=>}[rd]|{G_i}&F1_i\ar@{=>}[d]^{\Phi 1_i}\\
1_{Fi}\ar@{=>}[r]_{G_i}\ar@{}[ru]|(.3){\cong}|(.7){\overset{ \Phi_i}\cong}&G1_i,}
$$
and it is recursively defined at each string of adjacent edges in
the graph $a=a_1\cdots a_p$, by the 3-cells $\v_{\Phi}a$ where, if
$p=1$, then
$$
\xymatrix@C=20pt@R=20pt{F{a_1}\ar@{=>}[r]^{1}\ar@{}[rd]|{
\cong}\ar@{}@<-45pt>[d]|{\textstyle
\v_{\Phi}a_1:}\ar@{=>}[d]_{\Phi{a_1}}&
Fa_1\ar@{=>}[d]^{\Phi a_1}\\
Ga_1\ar@{=>}[r]^{1}&Ga_1,}
$$
is the canonical isomorphism, and then, when $p>1$, $\v_{\Phi}a$ is
 obtained from $\v_{\Phi}a'$, where $a'=a_2\cdots a_p$,
by pasting
$$
\xymatrix@R=25pt@C=35pt{Fa_1\otimes LR(F)a'
\ar@{}@<45pt>[d]|(.4){ { 1\otimes
\v_{\Phi}a'}\Rrightarrow} \ar@{}@<-80pt>[d]|{ \textstyle\v_{\Phi}a:}
\ar@{=>}[d]_{\Phi a_1\otimes LR(\Phi)a'} \ar@{=>}[r]^-{1\otimes
\v(F)a'}& Fa_1\otimes Fa' \ar@{}@<40pt>[d]|(.4){
{ \Phi_{a_1\!,a'}}\Rrightarrow} \ar@{=>}[d]|{\Phi
a_1\otimes \Phi a'}
\ar@{=>}[r]^-{F_{a_1\!,a'}}&Fa\ar@{=>}[d]^{\Phi a}\\
Ga_1\otimes LR(G)a'\ar@{=>}[r]^-{1\otimes \v(G)a'}& Ga_1\otimes
Ga'\ar@{=>}[r]^-{G_{a_1\!,a­}}&Ga.}
$$

\vspace{0.2cm} We are now ready to complete the proof of the lemma.
That the equalities $RL=1$, $\v L=1$, and $R\v=1$ hold only requires
a straightforward verification, and then part (a) follows. Moreover,
(b) has already been shown by construction of the homomorphism $L$.

$\bullet$ {\em The proof of}  (c).  Suppose that $F:I\to \T$ is any
homomorphic representation. This means that all structure 2-cells
$F_{a,b}$ and $F_i$ are equivalences, and 3-cells $F_{a,b,c}$,
$\widehat{F}_a$, and $\widetilde{F}_a$ are isomorphisms in the
hom-bicategories of $\T$ in which they lie. Then, directly from the
construction given, it easily follows that all the 2-cells $\v(F)a$
in (\ref{va}) are equivalences in the corresponding
hom-bicategories, and that all the 3-cells $\v(F)_{a,b}$ in
$(\ref{vab})$, and $\v_i$ in $(\ref{vi})$ are invertible. Hence,
each $\v(F):LR(F)\Rightarrow F$, for $F:I\to \T$ any homomorphic
representation, is an equivalence in the bicategory $\bhrep(I,\T)$.
Moreover, if $\Phi:F\Rightarrow G$ is any 1-cell  in $\bhrep(I,\T)$,
so that every 3-cell $\Phi_{a,b}$ is an isomorphism, then  we see
that the component (\ref{vphi}) of $\v$ at $\Phi$ consists only of
invertible 3-cells $\v_\Phi a$, whence $\v_\Phi$ is invertible
itself. Therefore, when  $\v$ is restricted to $\bhrep(I,\T)$, it
actually gives a pseudo-equivalence between $LR$ and $1$, the
identity homomorphism on the bicategory $\bhrep(I,\T)$. The claimed
biadjoint biequivalence (\ref{lrg}) is now an easy consequence of
all the already parts proved. Finally, it is clear that the
biadjoint biequivalence (\ref{lrg}) gives by restriction the
biadjoint biequivalence $(\ref{lrg2})$.
\end{proof}

\section{The Grothendieck nerve of a tricategory}

Let us briefly recall that it was Grothendieck
\cite{groth} who first associated a simplicial set
\begin{equation}\label{ncat}\ner C:\Delta^{\mathrm{op}}\to \set\end{equation} to a small
category $C$, calling it its \emph{nerve}. The set of $p$-simplices
$$
\ner_{p}C = \bigsqcup_{(c_0,\ldots,c_p)}\hspace{-0.15cm}
C(c_1,c_0)\times C(c_2,c_1)\times\cdots\times C(c_p,c_{p-1})
$$ consists of length $p$ sequences of composable morphisms in $C$.  Geometric realization
of its nerve is the {\em classifying space} of the category, $\class
C$.  A main result here shows how the Grothendieck nerve construction
for categories rises to tricategories.

 \subsection{The pseudo-simplicial bicategory nerve of a tricategory} When a tricategory $\T$ is strict, that is, a 3-category,  then the nerve construction (\ref{ncat})
  actually works by giving a simplicial 2-category (see Example \ref{ex3cat}).
  However, for an arbitrary tricategory, the device is more complicated since the compositions of cells in a tricategory is in general
  not associative and not unitary (which is crucial for the simplicial structure in the construction of
  $\ner\T$ as above), but it is only so up to coherent isomorphisms. This  `defect'
  has the effect of forcing one to deal with the classifying space of a nerve of $\T$ that is not
   simplicial but only up to coherent isomorphisms, that is, a {\em pseudo-simplicial bicategory} as  stated in the theorem below.
   Pseudo-simplicial bicategories, and the tricategory they form (whose 1-cells are pseudo-simplicial homomorphisms, 2-cells pseudo-simplicial transformations, and 3-cells pseudo-simplicial modifications) are treated in \cite{c-c-g},  to which we refer the reader.

\begin{theorem}\label{pstc} Any tricategory  $\T$
defines a normal pseudo-simplicial bicategory, called the  {\em
nerve of the tricategory},
\begin{equation}\label{bsnt}\ner\T=(\ner\T,\chi,\omega):\Delta^{\!{\mathrm{op}}}\
\to \bicat,
\end{equation}
whose bicategory of $p$-simplices, for $p\geq 1$, is {\em
\begin{equation}\label{bpsnt} \ner_{p}\mathcal{T} = \bigsqcup_{(t_0,\ldots,t_p)\in
\mbox{\scriptsize Ob}\mathcal{T}^{p+1}}\hspace{-0.3cm}
\mathcal{T}(t_1,t_0)\times\mathcal{T}(t_2,t_1)\times\cdots\times\mathcal{T}(t_p,t_{p-1}),
\end{equation}}

\noindent{and}  {\em $\ner_0\T=\mbox{0b}\T$}, as a discrete
bicategory. The face and degeneracy homomorphisms are defined on
$0$-cells, $1$-cells and $2$-cells of $\ner_{p}\mathcal{T}$ by the
ordinary formulas
\begin{equation}\label{fade}\begin{array}{l} d_i(x_1,\dots,x_p)=\left\{
\begin{array}{lcl}
\hspace{-0.2cm}  (x_2,\dots,x_p) &\text{ if }& i=0 ,  \\[4pt]
\hspace{-0.2cm}  (x_1,\dots,x_i\otimes x_{i+1},\dots,x_p) & \text{ if } & 0<i<p , \\[4pt]
\hspace{-0.2cm}  (x_1,\dots,x_{p-1}) & \text{ if }& i=p,
\end{array}\right.\\[20pt]
s_i(x_1,\dots,x_p)=(x_1,\dots,x_i,1,x_{i+1},\dots, x_p).
\end{array}
\end{equation}
Indeed, if $a:[q]\to [p]$ is any map in the simplicial category
$\Delta$, then the associated homomorphism $\ner_a\T:\ner_{p}\T
\to\ner_{q}\T$ is induced by  the composition
${\mathcal{T}(t',t)\times
\mathcal{T}(t'',t')\overset{\hspace{-4pt}\otimes}\to
\mathcal{T}(t'',t)}$ and unit $1_t:1\to \mathcal{T}(t,t)$
homomorphisms. The structure pseudo-equivalences
\begin{equation}\label{chis}\begin{array}{l}\xymatrix @C=10pt{\ner_p\T
\ar@/^0.9pc/[rr]^{ \ner_{\!b}\!\T\ \ner_{\!a}\!\T} \ar@/_0.9pc/[rr]_{ \ner_{\!ab}\!\T} \ar@{}[rr]|{
\Downarrow^{\chi_{a,b}}}& &\ner_n\T,}\end{array}\end{equation} for each pair of
composable maps  $[n]\overset{b}\to [q]\overset{a}\to [p]$ in
$\Delta$, and the invertible modifications
\begin{equation}\label{ws}\begin{array}{l}\xymatrix@C=40pt{\ner_{\!c}\hspace{-1pt}\T\ \ner_{\!b}\hspace{-1pt}\T\
 \ner_{\!a}\hspace{-1pt}\T\ar@{=>}[d]_{ \chi_{b,c}
\ner_{\!a}\hspace{-1pt}\T}\ar@{}[rd]|{ {
\omega_{a,b,c}}\,\Rrightarrow}\ar@{=>}[r]^{
\ner_{\!c}\hspace{-1pt}\T\,\chi_{a,b}}& \ner_c\hspace{-1pt}\T \ \ner_{\!ab}\hspace{-1pt}\T
\ar@{=>}[d]^{ \chi_{ab,c}}\\  \ner_{\!bc}\hspace{-1pt}\T
\  \ner_{\!a}\hspace{-1pt}\T\ar@{=>}[r]_{
\chi_{a,bc}}& \ner_{\!abc}\hspace{-1pt}\T,}\end{array}\end{equation} respectively associated to triplets of composable
arrows $[m]\overset{c}\to [n]\overset{b}\to [q] \overset{a}\to [p]$,
canonically arise all from the structure pseudo equivalences and
modifications data of the tricategory.
\end{theorem}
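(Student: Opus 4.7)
The plan is to identify $\ner_p\T$ with the bicategory $\brep(\mathcal{G}_p,\T)$ of representations of the linear graph $\mathcal{G}_p$ (vertices $0,1,\ldots,p$ and edges $e_i:i\to i-1$), and then to transport the strict simplicial structure of the assignment $[p]\mapsto\buhrep([p],\T)$ through the biadjoint biequivalence $L:\brep(\mathcal{G}_p,\T)\simeq\buhrep([p],\T):R$ of Lemma \ref{rufc}. Since $[p]$ is the free category on $\mathcal{G}_p$, the formula \eqref{bpsnt} merely restates the description of $\brep(\mathcal{G}_p,\T)$ on cells, so each $\ner_p\T$ inherits a canonical bicategory structure as a product of hom-bicategories of $\T$, with $\ner_0\T=\mathrm{Ob}\T$ discrete.

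For a simplicial operator $a:[q]\to [p]$, viewed as a functor of ordinal categories, Section 2.6 together with Lemma \ref{transtri} supplies a strict $2$-functor $a^*:\buhrep([p],\T)\to\buhrep([q],\T)$ by precomposition. I define the face/degeneracy homomorphisms by conjugation through the biequivalences,
$$a^*\;:=\;R\circ a^*\circ L\;:\;\ner_p\T\longrightarrow\ner_q\T.$$
Unpacking with the recursive definition of $L$ and the iterated tensor $\overset{_\mathrm{or}}\otimes$ of \eqref{multen}, the $i$-th component of $a^*(x_1,\ldots,x_p)$ comes out as $\overset{_\mathrm{or}}\otimes(x_{a(i-1)+1},\ldots,x_{a(i)})$, where empty products are interpreted as identity $1$-cells. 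Specialising $a$ to a face or degeneracy operator reproduces \eqref{fade} verbatim.

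For composable $[n]\xrightarrow{b}[q]\xrightarrow{a}[p]$, one has the honest equality $b^*a^*=(ab)^*$ at the level of $\buhrep(-,\T)$. The pseudo-equivalence $\chi_{a,b}$ is then obtained as the mate of this identity under the biequivalence; concretely, since $RL=1$ by Lemma \ref{rufc}(a),
$$b^*a^*\;=\;Rb^*(LR)a^*L\;\xRightarrow{\;Rb^*\v a^*L\;}\;Rb^*a^*L\;=\;(ab)^*,$$
and the explicit formulas for $\v$ in the proof of Lemma \ref{rufc} exhibit its components as iterated associators $\boldsymbol{a}$ and unitors $\boldsymbol{l},\boldsymbol{r}$ of $\T$. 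For a triple $[m]\xrightarrow{c}[n]\xrightarrow{b}[q]\xrightarrow{a}[p]$, the invertible modification $\omega_{a,b,c}$ in \eqref{ws} is assembled analogously from the structure $3$-cells $\v_{-,-}$ and $\v_{-}$, which in turn are canonically built from the tricategory modifications $\pi,\mu,\lambda,\rho$.

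The main obstacle is the verification of the two remaining axioms for a normal pseudo-simplicial bicategory: normality of $\chi$ and $\omega$ when one operator is an identity, and the coherence pentagon for $\omega$ at four composable simplicial operators. Normality is immediate, because $L$ lands in \emph{unitary} homomorphic representations and Lemma \ref{rufc}(a) gives $\v L=1$ and $R\v=1$, so $\chi_{a,1}$ and $\chi_{1,b}$ collapse to identities, and so do the corresponding $\omega$'s. The pentagon for $\omega$ reduces, by induction on the lengths of the operators and the recursive construction of $\v_{a,b}$, to axiom \textbf{(CR1)} of Section 2.1 applied to the extended representation $L(x_1,\ldots,x_p)$; the analogous unit compatibilities reduce to \textbf{(CR2)}. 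Since the $3$-cells $F_{a,b,c},\widehat{F}_a,\widetilde{F}_a$ of $L(x_1,\ldots,x_p)$ are themselves canonically assembled from $\pi,\mu,\lambda,\rho$, both verifications ultimately boil down to the defining coherence axioms of the tricategory $\T$, and no data beyond $(\boldsymbol{a},\boldsymbol{l},\boldsymbol{r},\pi,\mu,\lambda,\rho)$ is required.
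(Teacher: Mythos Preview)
Your proof is correct and follows essentially the same route as the paper: identify $\ner_p\T=\brep(\mathcal{G}_p,\T)$, transport the strict simplicial structure on $[p]\mapsto\bhrep([p],\T)$ (the paper uses $\bhrep$ rather than $\buhrep$, but either works) through the biequivalence $L\dashv R$ of Lemma~\ref{rufc}, and define $\ner_a\T=Ra^*L$ and $\chi_{a,b}=Rb^*\v a^*L$.

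The one place where your argument diverges from the paper's is the construction of $\omega$ and the verification of its pentagon. You describe $\omega$ as assembled from the structure $3$-cells $\v_{-,-}$ and $\v_{-}$ of Lemma~\ref{rufc}, and then reduce the pentagon inductively to axiom \textbf{(CR1)} for $L(x_1,\ldots,x_p)$. The paper instead builds $\omega_{a,b,c}$ from the \emph{interchange modification} $(\ref{4})$ of the tricategory $\bicat$ applied to the counit $\v$: namely $\omega_{a,b,c}=R_mc^*\omega'_ba^*L_p$ with $\omega'_b$ the canonical square $(\ref{2vec})$. The pentagon and normality then follow almost formally from the equalities $RL=1$, $\v L=1$, $R\v=1$ together with the general Fact~\ref{fpre1} about lax transformations, with no induction required. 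Your inductive reduction to \textbf{(CR1)}/\textbf{(CR2)} is valid in principle but considerably more laborious; the paper's abstract argument is what makes the verification short.
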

We shall prove Theorem \ref{pstc} simultaneously with Theorem
\ref{psth} below, which states the basic properties concerning 
the behavior of the Grothendieck nerve construction, $\T\mapsto
\ner\T$, with respect to trihomomorphisms of tricategories.

\begin{theorem}\label{psth} (i) Any trihomomorphism between tricategories $H:\T\to\T'$ induces
a normal pseudo-simplicial homomorphism \begin{equation}\ner H=(\ner
H,\theta,\Pi):\ner\T\to\ner \T',\end{equation} which, at any integer
$p\geq 0$, is the evident homomorphism
$\ner_pH:\ner_p\T\to\ner_p\T'$ defined on any cell $(x_1,\dots,x_p)$
of $\ner_p\T$ by \begin{equation}\ner_pH(x_1,\dots,x_p)=(Hx_1,\dots,
Hx_p).\end{equation}

The structure pseudo-equivalence
\begin{equation}\label{thet}\begin{array}{l}\xymatrix@R=15pt{\ner_p\T
\ar@{}[rd]|{\theta_a\,\Rightarrow}\ar[r]^{\ner_{\!p}H}\ar[d]_{\ner_{\!a}\!\T}&\ner_p\T'
\ar[d]^{\ner_{\!a}\!\T'}\\\ner_q\T\ar[r]_{\ner_{\!q}H}&\ner_q\T',}\end{array}\end{equation} for each  map
$a:[q]\to [p]$ in $\Delta$, and the invertible modifications
\begin{equation}\label{piH} \begin{array}{l}\xymatrix@R=8pt@C=30pt{\ner_{\!n}H\
\ner_{\!b}\T\ \ner_{\!a}\T
\ar@2{->}[dd]_{ \theta \ner_{\!a}\!\T} \ar@{=>}[rr]^{ \ner_{\!n}\!H\chi} &
&\ner_{\!n}H\ \ner_{\!ab}\T\ar@{=>}[dd]^{\theta}
\\ \ar@{}[rr]|(0.5){{\Pi_{a,b}}\,\Rrightarrow} && \\
\ner_{\!b}\T'\ \ner_{\!q}H\ \ner_{\!a}\T\ar@{=>}[r]_{ \ner_{\!b}\!\T' \theta}&
\ner_{\!b}\T'\ \ner_{\!a}\T'\ \ner_{\!p}H\ar@{=>}[r]_{\chi \ner_{\!p}H}&
\ner_{\!ab}\T'\ \ner_{\!p}H,}\end{array}
\end{equation}
 respectively associated to pairs of composable
arrows $[n]\overset{b}\to [q] \overset{a}\to [p]$, canonically arise
all from the structure pseudo equivalences and modifications data of
the trihomomorphism $H$ and the involved tricategories $\T$ and
$\T'$.

 (ii) For any
pair of composable trihomomorphisms $H:\T\to \T'$ and $H':\T'\to
\T''$, there is a pseudo-simplicial pseudo-equivalence \begin{equation}\label{trucom}\ner H'\
\ner H\Rightarrow \ner (H'H).\end{equation}

(iii) For any tricategory $\T$, there is a pseudo-simplicial
pseudo-equivalence \begin{equation}\label{truni} \ner 1_\T\Rightarrow 1_{\ner\!\T}.\end{equation}

\end{theorem}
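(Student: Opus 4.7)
The strategy is to prove Theorems \ref{pstc} and \ref{psth} simultaneously by transporting the structural data already available on bicategories of homomorphic representations, developed in Section \ref{secrep}, across the biadjoint biequivalence of Lemma \ref{rufc}. Observe first that if $\mathcal{G}_p$ denotes the linear path graph $0\leftarrow 1\leftarrow\cdots\leftarrow p$ of adjacent edges, then $\ner_p\T=\brep(\mathcal{G}_p,\T)$ by direct inspection of \eqref{bpsnt}, and the free category generated by $\mathcal{G}_p$ is precisely the ordinal $[p]$. Hence Lemma \ref{rufc} furnishes biadjoint biequivalences $L_p:\ner_p\T\rightleftarrows\bhrep([p],\T):R_p$ with $R_pL_p=1_{\ner_p\T}$ and counit $\v_p:L_pR_p\Rightarrow 1_{\bhrep([p],\T)}$.

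For Theorem \ref{pstc}, I would define the face and degeneracy homomorphisms by $\ner_a\T:=R_q\circ a^*\circ L_p:\ner_p\T\to\ner_q\T$ for each $a:[q]\to[p]$ in $\Delta$, and verify directly that on $0$-cells this agrees with the formulas \eqref{fade}; this reduces to the definition \eqref{lf1} of $L_p$, noting that $\overset{_{\mathrm{or}}}\otimes(x_i,x_{i+1})=x_i\otimes x_{i+1}$. Since $b^*a^*=(ab)^*$ holds strictly in $\bhrep$ for any composable $[n]\overset{b}\to[q]\overset{a}\to[p]$, the pseudo-equivalence $\chi_{a,b}$ of \eqref{chis} is defined by whiskering with $\v_q$:
\[\ner_b\T\,\ner_a\T\ =\ R_n b^* L_q R_q a^* L_p\ \overset{R_n b^*\v_q a^*L_p}{\Longrightarrow}\ R_n b^* a^* L_p\ =\ \ner_{ab}\T,\]
and the invertible modifications $\omega_{a,b,c}$ of \eqref{ws} are obtained as canonical instances of the coherence modification \eqref{4} for the pseudo-transformation $\v$. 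The pseudo-simplicial identities for $\omega$ then reduce to the canonical coherence of $\v$, which in turn was built in Lemma \ref{rufc} from the tricategorical data $\pi,\mu,\lambda,\rho$.

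For Part (i) of Theorem \ref{psth}, I would define $\ner_pH$ either componentwise (matching the stated formula) or equivalently as $R_p\circ H_*\circ L_p$; the two agree because $(H_*F)a=HFa$ and $R_pL_p=1$. The strict equality $H_*a^*=a^*H_*$ supplied by Lemma \ref{transtri}(i) then yields $\theta_a$ of \eqref{thet} by two whiskerings with $\v$, inserting and removing $L_qR_q$ and $L_pR_p$ as needed, and $\Pi_{a,b}$ is assembled by pasting the pseudo-naturality constraints of $\v$ with the $\chi$ and $\theta$ just constructed. For Parts (ii) and (iii), the pseudo-equivalences $m:H'_*H_*\Rightarrow(H'H)_*$ and $m:(1_\T)_*\Rightarrow 1$ of Lemma \ref{transtri}(ii)--(iii) transport through the outer $R_p,L_p$ to produce the required pseudo-simplicial pseudo-equivalences \eqref{trucom} and \eqref{truni}; the equalities $ma^*=a^*m$ recorded in Lemma \ref{transtri} guarantee compatibility with the face and degeneracy homomorphisms, while the coherence modifications again come from $\v$.

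The principal obstacle is the verification of the pseudo-simplicial coherence axioms, i.e., the $3$-cell equations that $\omega$, $\Pi$, and the higher modifications controlling \eqref{trucom} and \eqref{truni} must satisfy. After the transport, each such axiom unfolds into a pasting equation whose constituent $3$-cells are pseudo-naturality squares of $\v$, the structure constraints of the homomorphism $L_p$, and the tricategorical modifications $\pi,\mu,\lambda,\rho$ together with the modification data $\gamma,\delta,\omega$ of the relevant trihomomorphism. These equations are lengthy but mechanical: they are canonical consequences of the axioms defining a trihomomorphism and of the coherence already incorporated into the construction of $L$ in Lemma \ref{rufc}, so no ad hoc choices beyond those already fixed in Section \ref{secrep} are required. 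This is precisely the payoff of passing through $\bhrep([p],\T)$: the pseudo-simplicial, pseudo-functorial and pseudo-natural structure of $\ner(-)$ is reduced to the representation-theoretic bookkeeping already completed in Section \ref{secrep}.
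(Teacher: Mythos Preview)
Your proposal is correct and follows essentially the same route as the paper: both proofs identify $\ner_p\T=\brep(\mathcal{G}_p,\T)$, invoke the biadjoint biequivalence $L_p\dashv R_p$ of Lemma~\ref{rufc}, set $\ner_a\T=R_qa^*L_p$ and $\chi_{a,b}=R_nb^*\v_qa^*L_p$, and then transport the data of Lemma~\ref{transtri} through $R$ and $L$ to produce $\ner H$, $\theta$, $\Pi$, and the pseudo-equivalences \eqref{trucom}, \eqref{truni}. One small sharpening: to \emph{insert} an $L_pR_p$ (as opposed to removing one) you need the adjoint quasi-inverse $\v^\bullet:1\Rightarrow LR$, not $\v$ itself---the paper makes this explicit, choosing $\v^\bullet$ with $R\v^\bullet=1$ and $\v^\bullet L=1$, and defines $\theta_a=Ra^*\v^\bullet H_*L\circ RH_*\v a^*L$; your phrase ``two whiskerings with $\v$'' should be read accordingly.
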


\begin{proof}[Proof of Theorems $\ref{pstc}$ and $\ref{psth}$] Let us note
that, for any integer $p\geq 0$, the category $[p]$ is free on the
graph \begin{equation}\label{graphp}\mathcal{G}_p=(p\to  \cdots\to
1\to 0),\end{equation} and that
$\ner_{p}\T=\brep(\mathcal{G}_p,\T)$. The existence of a biadjoint biequivalence
\begin{equation}\label{plrg}L_p\dashv R_p: \ner_p\T \rightleftarrows \bhrep([p],\T)\end{equation}
 follows from Lemma
\ref{rufc},
where $R_p$ is the 2-functor defined by
restricting to the basic graph $\mathcal{G}_p$ of the category
$[p]$, such that $R_pL_p=1$, whose unity is the identity, and whose
counit $\v_p:L_pR_p\Rightarrow 1$ is a pseudo-equivalence satisfying
the equalities $\v_p L_p=1$ and $R_p\v_p=1$.

Then, if $a:[q]\to [p]$ is any map in the simplicial category, the
associated homomorphism $\ner_{\!a}\T:\ner_p\T\to\ner_q\T$, is
defined to be the composite
\begin{equation}\label{astar} \begin{array}{l}\xymatrix@R=20pt{ \ner_p\T\ar@{.>}[r]^{\ner_{\!a}\T}\ar[d]_{L_p}
&\ner_q\T\\
\bhrep([p],\T)\ar[r]^{a^*}&\bhrep([q],\T). \ar[u]_{R_q}}
\end{array}\end{equation}

 Observe that, thus
defined, the homomorphism $\ner_{\!a}\T$ maps the component
bicategory of $\ner_{p}\T$ at $(t_0,\dots,t_p)$  into the component
at $(t_{a(0)},\dots,t_{a(q)})$ of $\ner_{q}\T$, and it acts on
$0$-cells, $1$-cells, and $2$-cells of $\ner_{p}\mathcal{T}$ by the
formula
$$\ner_{\!a}\!\T(x_1,\dots,x_p)=(y_1,\dots,y_q)$$ where, for $0\leq k< q$, (see $(\ref{multen})$ for the definition of $\overset{_\mathrm{or}}\otimes$)
\begin{equation}\label{fa}
y_{k+1}=\left\{\begin{array}{lll} \xymatrix{\hspace{-4pt}\overset{_\mathrm{or}}\otimes(x_{a(k)+1},\ldots, x_{a(k+1)})}& \text{\em if} & a(k)<a(k+1),\\[6pt]
1& \text{\em if} & a(k)=a(k+1).\end{array}\right.
\end{equation}
Whence, in particular, the formulas (\ref{fade}) for the face and
degeneracy homomorphisms.

The pseudo natural equivalence $(\ref{chis})$ is
   \begin{equation}\label{chi}\xymatrix@C=35pt{\ner_{\!b}\!\T\ \ner_{\!a}\!\T\!= R_n
   b^*L_qR_q
    a^* L_p\ar@{=>}[rr]^-{\chi_{a,b}=R_nb^*\v_q a^*L_p}
    &&R_n b^*a^*L_p= R_n (ab)^*L_p=\ner_{\!ab}\T,}\end{equation}
and the invertible modification $(\ref{ws})$ is
\begin{equation}\label{omega} \omega_{a,b,c}=R_mc^*\omega'_ba^*L_p,\end{equation}
where $\omega'_b$ is the canonical modification
\begin{equation}\label{2vec}\begin{array}{l}
\xymatrix@C=45pt@R=18pt{L_nR_n b^*L_qR_q\ar@{=>}[r]^(.55){L_nR_n b^*\v_q}\ar@{=>}[d]_{\v_n b^*L_qR_q}
\ar@{}[rd]|{ \overset{ (\ref{4})}\Rrightarrow}\ar@{}@<-2cm>[d]|-{\omega'_b:}
&L_nR_n b^*\ar@{=>}[d]^{\v_n b^*}
\\ b^*L_qR_q\ar@{=>}[r]_{b^*\v_q}&b^*.}\end{array}
\end{equation}

Thus defined, $\ner\T$ is actually a normal pseudo-simplicial
bicategory. Both coherence conditions for $\ner\T$ (i.e., conditions
$(\mathbf{CC1})$ and $(\mathbf{CC2})$ in \cite{c-c-g}, with the
modifications $\gamma$ and $\delta$ the unique unity coherence
isomorphisms $1\circ 1\cong 1$) follow from the equalities $R_pL_p=1$, $\v_p L_p=1$, and
$R_p\v_p=1$, by employing the useful Fact
\ref{fpre1} below. This proves Theorem \ref{pstc}.

\begin{fact}\label{fpre1} Let $\alpha:F\Rightarrow F':\b\to \c$ be a lax transformation between
homomorphisms of bicategories. Then, for any $2$-cell in $\b$
$$\begin{array}{l}\label{e1.1}\xymatrix@C=10pt@R=0pt{
&x_1\ar[r]&\cdots\ar[r]&x_n\ar[dr]^{a_n}&\\ x\ar[ur]^{ a_0}\ar[rd]_{b_0}\ar@{}[rrrr]|{\Downarrow u}&&&&x',\\
&x'_1\ar[r]&\cdots\ar[r]&x'_m\ar[ur]_{
b_m}&}\end{array}$$ the following equality holds:
$$
\xymatrix@C=8pt@R=0pt{&&&&&\ar@{}@<2pt>[ddddd]^(.7){ =}~\hspace{0.3cm}&&&&&&&&&\\
&Fx_1\ar[r]&\cdots\ar[r]&Fx_n\ar[rd]^{Fa_n}& &&& &Fx_1\ar[r]\ar[ddd]&\cdots\ar[r]&Fx_n\ar[rd]^{Fa_n}
\ar[ddd]
& \\
Fx\ar[ru]^{Fa_0}\ar[rd]^{Fb_0}\ar[ddd]_{\alpha x}\ar@{}@<6pt>[dddd]^{ \Downarrow\!\alpha_{b_0}}
\ar@{}[rrrr]|{\Downarrow Fu}&
&
&\ar@{}@<6pt>[dddd]^{\Downarrow\!\alpha_{b_m}}&
Fx'\ar[ddd]\ar@{}@<-2pt>[ddd]^(.5){\alpha x'}&&&
Fx\ar[ru]^{F\!a_0}\ar[ddd]\ar@{}@<0.5pt>[ddd]_(.5){\alpha x}
\ar@{}@<6pt>[dddd]^(.25){ \Downarrow\!\alpha_{a_0}}&
&&&Fx'\ar[ddd]^{\alpha x'}
\ar@{}@<-30pt>[dddd]^(.25){\Downarrow\!\alpha_{a_n}}\\
&Fx'_1\ar[r]\ar[ddd]&\cdots\ar[r]&Fx'_m\ar[ru]^{Fb_m}\ar[ddd]& &&& &&\cdots&&\\
&&\cdots&&&&&&F'x_1\ar[r]&\cdots\ar[r]&F'x_n\ar[rd]_{F'a_n}& \\
F'x\ar[rd]_{F'b_0}&&&&F'x'&&&F'x\ar[ru]_{F'a_0}\ar[rd]_{F'b_0}\ar@{}[rrrr]|{\Downarrow F'u}&&&&F'\!x'.\\
&F'x'_1\ar[r]&\cdots\ar[r]&F'x'_m\ar[ru]_{F'b_m}&&&&&F'x'_1\ar[r]&\cdots\ar[r]&F'x'_m\ar[ru]_{F'b_m}&
}
$$
\end{fact}
And when it comes to Theorem \ref{psth}, first, let us  note
that the homomorphisms $\ner_pH:\ner_p\T\to\ner_p\T'$, $p\geq 0$,
make commutative the diagrams
$$
\xymatrix@R=20pt{\ner_p\T\ar[r]^{\ner_pH}\ar[d]_{L=L_p^\T}&\ner_p\T'\\ \bhrep([p],\T)\ar[r]^{H_*}&\bhrep([p],\T')
\ar[u]_{R=R_p^{\T'}},}
$$
where  $H_*$ is the induced homomorphism by the trihomomorphism
$H:\T\to\T'$ (see Lemma \ref{transtri} $(i)$). Then, the
pseudo-equivalence $(\ref{thet})$, $\theta_a$, is provided by the
pseudo-equivalences $\v:LR\Rightarrow 1$ and their adjoint
quasi-inverses $\v^\bullet:1\Rightarrow LR$ (which we can choose
such that $R\v^\bullet=1$ and $\v^\bullet L=1$); that is, $ \theta_a
= Ra^*\v^\bullet H_*L\circ RH_*LRa^*L$,
$$
\xymatrix@C=-30pt@R=20pt{&
RH_*a^*L=Ra^*H_*L\ar@{=>}[rd]\ar@{}@<10pt>[rd]|(.6){\ Ra^*\v^\bullet H_*L}&\\
\ner_qH\ \ner_{\!a}\T=RH_*LRa^*L\ar@{=>}[ru]\ar@{}@<12pt>[ru]|(.4){RH_*\v a^*L}\ar@{.>}[rr]^{\theta_a} &
&Ra^*LRH_*L= \ner_{\!a}\T'\ \ner_pH.}
$$

And, for $[n]\overset{b}\to [q]\overset{a}\to [p]$, any two
composable arrows  of $\Delta$, the  structure invertible
modification $(\ref{piH})$, $\Pi_{a,b}$, is the modification
obtained by pasting the diagram
$$
\xymatrix@C=32pt@R=30pt{RH_*LRb^*LRa^*L \ar@{}@<-3pt>[rrrrd]^{\overset{(\ref{4})}\Rrightarrow}
\ar@{=>}[d]_{RH_*\v b^*LRa^*L}\ar@{=>}[rrrr]^-{RH_*LRb^*\v a^*L}
&&&&RH_*RLb^*a^*L\ar@{=>}[d]^{RH_*\v b^*a^*L}\\
RH_*b^*LRa^*L\ar@{}@<3pt>[rrd]_{\overset{(\ref{4})}\Rrightarrow}
\ar@{=>}[rrrr]^{Rb^*H_*\v a^*L}\ar@{=>}[d]_{Rb^*\v^\bullet H_*LR a^*L}
&&&&RH_*b^*a^*L\ar@{=>}[ld]^1\ar@{=>}@/_1.5pc/[lllldd]_{Rb^*\!\v^\bullet\! H_*\!a^*\!L\hspace{10pt} }
\ar@{=>}[d]^{Rb^*\!a^*\!\v^\bullet\! H_*\!L}
\\Rb^*LRH_*LRa^*L\ar@{=>}[d]_{Rb^*LRH_*\v a^*L}&&&Rb^*a^*H_*L
\ar@{=>}[r]_{Rb^*\!a^*\!\v^\bullet\! H_*\!L}&Rb^*a^*LRH_*L\ar@{}[lu]_(.3){\cong} \\
Rb^*LRH_*a^*L\ar@{=>}[rrrr]^(.6){Rb^*LRa^*\v^\bullet H_*L}\ar@{=>}[urrr]_(.5){\ \  Rb^*\v a^*H_*L}
\ar@{}@<-24pt>[rrruu]^(.6){\overset{(A)}\cong}&&&&
Rb^*LRa^*LRH_*L\ar@{=>}[u]_{Rb^*\!\v a^*\!LRH_*\!L}\ar@{}[lu]|{ \overset{(\ref{4})}\Rrightarrow}
}
$$
where the isomorphism labelled $(A)$ is given by the adjunction
invertible modification  $\v\circ \v^\bullet\cong 1$.

The coherence conditions for $\ner H:\ner\T \to\ner\T'$  (i.e.,
conditions $(\mathbf{CC3})$ and $(\mathbf{CC4})$ in \cite{c-c-g},
with the modifications $\Gamma$ the  coherence isomorphisms $1\circ
1\cong 1$), are easily verified by using Fact \ref{fpre1}.

Suppose now $\T\overset{H}\to \T'\overset{H'}\to \T''$ two
composable trihomomorphisms. Then, the pseudo-simplicial
pseudo-equivalence (\ref{trucom}),  $\alpha: \ner H'\ \ner
H\Rightarrow \ner (H'H)$, is, at any integer $p\geq 0$, given by
$\alpha_p=RmL\circ RH'_*\v H_*L$,
$$
\xymatrix@C=-10pt{&
RH'_*H_*L\ar@{=>}[rd]\ar@{}@<14pt>[rd]|(.6){\ R mL}&\\
\ner_pH'\ \ner_pH=RH'_*LRH_*L\ar@{=>}[ru]\ar@{}@<18pt>[ru]|(.3){RH'_*\v H_*L}
\ar@{.>}[rr]^{\alpha_p} &
&R(H'H)_*L= \ner_p(H'H),}
$$
where the pseudo-equivalence $m:H'_*H_*\Rightarrow (H'H)_*:\bhrep([p],\T)\to\bhrep([p],\T'')$
is that given in Lemma \ref{transtri} $(ii)$. The naturality
component of $\alpha$ at any map $a:[q]\to [p]$,
$$
\xymatrix@C=40pt{\ner_qH'\ \ner_qH\ \ner_{\!a}\T\ar@{}[rrd]|{\Rrightarrow}
\ar@{=>}[rr]^{\alpha_q\ner_{\!a}\T}\ar@{=>}[d]_{\ner_q\!H'\,\theta_a}&&\ner_q(H'H)\
\ner_{\!a}\!\T\ar@{=>}[d]^{\theta_a}\\
\ner_qH'\ \ner_{\!a}\T'\ \ner_pH\ar@{=>}[r]_{\theta_a\ner_p\!H}&\ner_{\!a}\T''\
\ner_pH'\ \ner_pH\ar@{=>}[r]_{\ner_{\!a}\!\T''\alpha_p}& \ner_{\!a}\T''\ \ner_p(H'H),}
$$
is provided by the invertible modification obtained by pasting in
$$
\xymatrix@C=34pt{H'_*LRH_*LRa^* \ar@{}@<-4pt>[rrd]^{  \overset{(\ref{4})}\Rrightarrow}
\ar@{=>}[d]_{H'_*LRH_*\v a^*} \ar@{=>}[rr]^-{H'_*\v LRa^*}&&
H'_*H_*LRa^*\ar@{=>}[r]^{ m LRa^* }\ar@{=>}[d]_(.5){H'_*H_*\v a^*}
\ar@{}@<-4pt>[rd]^{  \overset{(\ref{4})}\Rrightarrow}&(H'H)_*LRa^*
 \ar@{=>}[d]^{(H'H)_*\v a^*}\\
H'_*LRH_*a^* \ar@{}@<14pt>[d]^(.55){  \overset{(A)}\Rrightarrow}
\ar@{=>}[rr]^-{H'_*\v H_*a^*}\ar@{=>}[d]_{H'_*LRa^*\v^\bullet H_*}\ar@{=>}[rd]^1 &\ar@{}[d]|(.45){  \cong}&
H'_*H_*a^*\ar@{=>}[r]^{ma^*=a^*m}\ar@{=>}[ddd]_{a^*\v^\bullet H'_*H_*}& (H'H)_*a^* \ar@{}@<-4pt>[ldd]^{  \overset{(\ref{4})}\Rrightarrow}
\ar@{=>}[ddd]^{a^*\v^\bullet(H'H)_*}\\
H'_*LRa^*LRH_*\ar@{}@<-4pt>[rdd]^{  \overset{(\ref{4})}\Rrightarrow}
\ar@{=>}[dd]_{H'_*\v a^*LRH_*}\ar@{=>}[r]_{H'_*\hspace{-1pt}LRa^*\hspace{-1pt}\v\hspace{-1pt} H_*}&H'_*LRa^*H_* \ar@{}@<-4pt>[rdd]^(.6){  \overset{(\ref{4})}\Rrightarrow}
\ar@{=>}[ru]^{H'_*\v H_*a^*}&&\\
&&& \\
H'_*a^*LRH_* \ar@{=>}@/_2pc/[rruuu]^{H'_*a^*\v H_*}\ar@{=>}[r]_{a^*\v^\bullet H'_*LRH_*}&
a^*LRH'_*LRH_*\ar@{=>}[r]_{\ a^*LRH'_*\v H_*}&a^*LRH'_*H_*\ar@{=>}[r]_{a^*LRm}&a^*LR(H'H)_*,
}
$$
where the isomorphism labelled $(A)$ is given by the adjunction
invertible modification  $\v\circ \v^\bullet\cong 1$.

Finally, the pseudo-simplicial pseudo-equivalence $(\ref{truni})$,
$\beta:\ner 1_\T\Rightarrow 1_{\ner\!\T}$, is defined by the family
of pseudo-equivalences
\begin{equation} \xymatrix@C=45pt{\ner_p1_\T=R(1_\T)_*L\ar@{=>}[r]^-{\beta_p=RmL}&RL=1_{\ner_p\!\T},} \end{equation}
 where $m:(1_\T)_*\Rightarrow 1:\bhrep([p],\T)\to \bhrep([p],\T)$
is the pseudo-equivalence in Lemma \ref{transtri} $(iii)$. The
naturality invertible modification attached at any map
$a:[q]\to[p]$,
$$
\xymatrix@C=30pt@R=20pt{\ner_p1_\T\ \ner_{\!a}\!\T \ar@{}[rd]|{ \Rrightarrow}\ar@{=>}[d]_{\theta_a}\ar@{=>}[r]^-{\beta_p\ner_{\!a}\!\T}&\ner_{\!a}\!\T\ar@{=>}[d]^{1}\\
\ner_{\!a}\!\T\ \ner_q1_\T \ar@{=>}[r]_{\ner_{\!a}\!\T\beta_q}&\ner_{\!a}\!\T,}
$$
is that obtained by pasting the diagram
$$
\xymatrix{R(1_\T)_*LRa^*L  \ar@{}@<-6pt>[rd]^{\overset{(\ref{4})}\Rrightarrow}
\ar@{=>}[rr]^{RmLRa^*L}\ar@{=>}[d]_{R(1_\T)_*\v a^*\!L}&&RLRa^*L=Ra^*L\ar@{=>}[dd]^{1}\ar@{=>}[ld]\ar@{}@<-3pt>[ld]_-{1=R\v a^*\!L}\\
R(1_\T)_*a^*L\ar@{}@<-7pt>[rd]^{\overset{(\ref{4})}\Rrightarrow}
\ar@{=>}[r]^-{Rma^*\!L}\ar@{=>}[d]_{Ra^*\v^\bullet (1_\T)_*L}&Ra^*L\ar@{}[r]|-{\cong}\ar@{=>}[rd]\ar@{}@<-3pt>[rd]_(.4){1=Ra^*\v^\bullet\! L}& \\
Ra^*LR(1_\T)_*L\ar@{=>}[rr]_{Ra^*\!LRmL}&&Ra^*LRL=Ra^*L.}
$$
The coherence conditions $(\mathbf{CC5})$ and $(\mathbf{CC6})$ in
\cite{c-c-g}, for both $\alpha$ and $\beta$,  are plainly verified.
\end{proof}

\section{The classifying space of a tricategory}
\subsection{Preliminaries on classifying spaces of bicategories}When a bicategory $\b$ is regarded as a tricategory all
of whose 3-cells are identities, the nerve construction (\ref{bsnt}) on
it actually produces a normal pseudo-simplicial category
$$\ner\b=(\ner\b,\chi):\Delta^{\!^{\mathrm{op}}}\to \cat,$$ which is
called in \cite[\S 3]{ccg} the {\em pseudo-simplicial nerve of the
bicategory}. The {\em classifying space of the bicategory}, denoted
by $\class\b$, is then defined to be the ordinary classifying space
of the category obtained by the Grothendieck construction
\cite{groth71} on the nerve of the bicategory, that is,
$\class\b=\class\!\int_{\!\Delta}\!\ner\b$,  \cite[Definition
3.1]{ccg}. The following facts, concerning classifying spaces
of bicategories, are proved in \cite[(30) and Theorem 7.1]{ccg}:

\begin{fact}\label{i} Each homomorphism between bicategories $F:\b\to \c$ induces
a continuous cellular map $\class F:\class\b\to\class\c$. Thus, the
classifying space construction, $\b\mapsto \class\b$, defines a
functor from the category {\bf Hom} of bicategories  to
CW-complexes.
\end{fact}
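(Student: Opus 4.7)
The plan is to factor $\class F$ as a composite of three operations, each with good functoriality properties. First, viewing the bicategories $\b,\c$ as tricategories with all $3$-cells being identities, the homomorphism $F$ becomes a trihomomorphism, so Theorem \ref{psth} produces a normal pseudo-simplicial homomorphism $\ner F = (\ner F,\theta)\colon \ner\b\to\ner\c$; because the hom-bicategories of $\b$ and $\c$ are mere categories, $\ner\b$ and $\ner\c$ are pseudo-simplicial \emph{categories} and each $\theta_a$ is a family of natural isomorphisms (no non-trivial $\Pi_{a,b}$ survives). Second, the Grothendieck construction $\int_{\!\Delta}$ sends any pseudo-simplicial category to an ordinary category and any pseudo-simplicial functor to an ordinary functor, so we obtain a functor $\int_{\!\Delta}\!\ner F\colon \int_{\!\Delta}\!\ner\b\to\int_{\!\Delta}\!\ner\c$. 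Third, the ordinary classifying space $\class(-)=|\ner(-)|$ is functorial on $\cat$. Composing the three yields the continuous cellular map $\class F := \class\!\int_{\!\Delta}\!\ner F\colon \class\b\to\class\c$.

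For the functor property $\class 1_\b = 1_{\class\b}$ and $\class(GF) = \class G\circ\class F$, I would verify that the nerve itself is strictly functorial on $\Hom$. By the construction of $\ner F$ given in the proof of Theorem \ref{psth}, on cells one has $\ner_{\!p}F(x_1,\ldots,x_p) = (Fx_1,\ldots,Fx_p)$, and the structure constraints $\theta_a$ are the pasted composites of the structure $2$-cells $F_{a,b},F_x$ of $F$ dictated by the recursive `iterated composition' formulas $(\ref{fa})$ defining $\ner_{\!a}\!\T$. A direct inspection shows that these assignments are strictly multiplicative in $F$: $\ner(GF) = \ner G\cdot\ner F$ and $\ner 1_\b = 1_{\ner\b}$ on the nose. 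Since $\int_{\!\Delta}$ and $\class(-)$ are honest functors, the desired equalities follow automatically.

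The main obstacle I anticipate is this last strictness claim. For general trihomomorphisms, Theorem \ref{psth}(ii)--(iii) only provides pseudo-equivalences $\ner(H'H)\simeq \ner H'\cdot\ner H$ and $\ner 1_\T\simeq 1_{\ner\T}$, which after passing through the Grothendieck construction and geometric realization would produce only a \emph{homotopy} $\class(GF)\simeq \class G\circ\class F$ rather than an equality. To upgrade to strict functoriality, one must track that in the bicategorical case the higher coherence data $\omega,\gamma,\delta$ of the trihomomorphism associated to $F$ are all identities, so the canonical modifications $\omega'_b$ in $(\ref{2vec})$, as well as $\Pi_{a,b}$, $\alpha$, and $\beta$ appearing in the proof of Theorem \ref{psth}, reduce to identity transformations. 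The verification is a routine but meticulous coherence check: the biadjoint biequivalence $L_p\dashv R_p$ from Lemma \ref{rufc} specializes, when there are no $3$-cells, to a $2$-adjoint equivalence whose counit $\v_p$ is an identity on the image of $L_p$, and all the pasting diagrams entering $\theta$ and $\Pi$ then collapse.
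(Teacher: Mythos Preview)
The paper does not prove this Fact; it is quoted from \cite[(30) and Theorem 7.1]{ccg}. Your first paragraph accurately describes the construction carried out there: $\ner F$ is a pseudo-simplicial functor between pseudo-simplicial categories, $\int_{\!\Delta}$ and the ordinary classifying space are functorial, and one composes. Routing this through Theorem~\ref{psth} is logically backward in the present paper, since Facts~\ref{i}--\ref{iv} are cited as prerequisites for the tricategorical development, but the content is the same.

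Your argument for strict functoriality, however, misidentifies what collapses in the bicategorical case. The absence of $3$-cells forces the modifications $\Pi_{a,b}$, $\omega$, and the naturality components of $m$ to be identities, so $\ner F$ is indeed an honest pseudo-simplicial functor. But the pseudo-equivalence $\alpha$ of Theorem~\ref{psth}(ii) is \emph{not} the identity: its component at $f\in\ner_p\b$ involves the counit $\v$ (for $\c$) evaluated at $H_*L(f)$, and $H_*L(f)$ does not lie in the image of $L$ when $H$ is non-strict, since its $1$-cells have the form $H\big(f_1\otimes(f_2\otimes\cdots)\big)$ rather than $Hf_1\otimes(Hf_2\otimes\cdots)$. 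So the observation ``$\v_p L_p=1$'' is true but does not apply here. The equality $\ner(GF)=\ner G\cdot\ner F$ holds for a different reason: once unwound, $\theta^F_a$ is a pasted composite of instances of $\chi^F$ and $\iota^F$, and the composition formula $\chi^{GF}=G\chi^F\circ\chi^G_{F-,F-}$ defining $\Hom$ then yields $\theta^{GF}_a$ as the required paste of $\theta^G_a$ and $\theta^F_a$. That is the direct verification in \cite{ccg}.
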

\begin{fact}\label{ii} If $F,F':\b\to\c$ are two homomorphisms between bicategories, then any lax (or oplax)
transformation, $F\Rightarrow F'$, canonically defines a homotopy
between the induced maps on classifying spaces, $ \class F\simeq
\class F':\class\b\to\class\c$.
\end{fact}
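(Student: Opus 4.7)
The plan is to package $F$, $F'$, and $\alpha$ into a single lax functor on a cylinder bicategory, and then invoke (the extension to lax functors of) Fact~\ref{i}. Let $[1]$ denote the ordered set $\{0<1\}$ viewed as a locally discrete bicategory, and form the product bicategory $\b\times[1]$ (componentwise composition and constraints; note $[1]$ is strict with only identity 2-cells). Define a lax functor $\widetilde\alpha:\b\times[1]\to\c$ by $\widetilde\alpha(x,0)=Fx$, $\widetilde\alpha(x,1)=F'x$; restrict to $F$ on $\b\times\{0\}$ and to $F'$ on $\b\times\{1\}$; send the transition 1-cell $(1_x,0\!\to\!1)$ to $\alpha x$, and a mixed 1-cell $(a,0\!\to\!1)$ with $a:x\to y$ in $\b$ to the composite $F'a\otimes\alpha x:Fx\to F'y$. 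For homogeneous composable pairs the compositors are those of $F$ or $F'$. For a mixed composable pair $(a,\mathrm{id}_1)\otimes(b,0\!\to\!1)\Rightarrow(ab,0\!\to\!1)$ the compositor is built from $F'_{a,b}$ and the associator of $\c$; for $(a,0\!\to\!1)\otimes(b,\mathrm{id}_0)\Rightarrow(ab,0\!\to\!1)$ one uses the naturality 2-cell $\alpha_b:\alpha\otimes Fb\Rightarrow F'b\otimes\alpha$ followed by $F'_{a,b}$.

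Checking that $\widetilde\alpha$ satisfies the lax-functor coherence axioms is the main technical step. The verification splits by a case analysis on where the transition $0\!\to\!1$ occurs in a given triple (or unit) of composable morphisms. In the homogeneous cases one recovers the coherence axioms of $F$ or $F'$. In the mixed cases the associativity pentagon for $\widetilde\alpha$ is precisely the lax-transformation pentagon axiom for $\alpha_{a,b}$ relative to $\alpha_a$ and $\alpha_b$, and the unit axiom is the unit coherence of $\alpha$. Conceptually, this is the bicategorical analogue of the familiar construction in which a natural transformation of ordinary functors extends canonically to a functor on the cylinder $C\times[1]$.

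Applying the classifying space functor then yields a continuous map $\class\widetilde\alpha:\class(\b\times[1])\to \class\c$, and it remains to identify $\class(\b\times[1])$ with $\class\b\times I$ up to homotopy. At the level of pseudo-simplicial nerves one has $\ner_p(\b\times[1])\cong \ner_p\b\times\ner_p[1]$ for each $p$, and passage through the Grothendieck construction followed by Milnor realization sends such a levelwise product to a product of spaces up to natural homotopy equivalence (via the diagonal / Eilenberg--Zilber identification on the associated bisimplicial sets, together with the standard compatibility of Milnor realization with products in compactly generated topology). Combined with $\class[1]=|\Delta[1]|\cong I$, this yields a natural homotopy equivalence $\class(\b\times[1])\simeq \class\b\times I$, and the composite $\class\b\times I\simeq \class(\b\times[1])\xrightarrow{\class\widetilde\alpha}\class\c$ restricts, by construction, to $\class F$ on $\class\b\times\{0\}$ and to $\class F'$ on $\class\b\times\{1\}$, producing the required homotopy. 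The oplax case is entirely dual, obtained by defining $\widetilde\alpha(a,0\!\to\!1)=\alpha y\otimes Fa$ instead.
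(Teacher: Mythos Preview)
The paper does not prove this statement; it merely records it as a fact established in \cite[(30) and Theorem 7.1]{ccg}. So there is no in-paper proof to compare against, and your proposal should be judged on its own merits.

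Your cylinder-functor strategy is the right idea and is essentially the approach used in the literature. However, there is a genuine gap at the step where you ``invoke (the extension to lax functors of) Fact~\ref{i}''. As stated in this paper, Fact~\ref{i} gives functoriality of $\class$ only on $\mathbf{Hom}$, i.e.\ on \emph{homomorphisms} (pseudo-functors), and $\class\b$ is defined here as $\class\!\int_\Delta\!\ner\b$ via the pseudo-simplicial Grothendieck nerve. Your $\widetilde\alpha$ is genuinely only a \emph{lax} functor (the mixed compositor built from the not-necessarily-invertible $\alpha_b$ is not an isomorphism), and a lax functor does \emph{not} induce a pseudo-simplicial homomorphism on $\ner(-)$: the naturality squares with the inner face maps only commute up to non-invertible 2-cells, so you do not get a map $\int_\Delta\!\ner\b\to\int_\Delta\!\ner\c$ directly. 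You cannot simply wave at an ``extension'' of Fact~\ref{i}; that extension \emph{is} the content of the result you are trying to prove.

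The fix, and what \cite{ccg} actually does, is to pass to the \emph{geometric} nerve $\Delta\b$ (Example~\ref{exgeobicat} and Fact~\ref{viii} here): $\Delta(-)$ \emph{is} functorial on lax functors, since a lax functor $\b\to\c$ post-composes with unitary lax functors $[p]\to\b$. One then has $\Delta(\b\times[1])\cong\Delta\b\times\Delta[1]$ on the nose, so $|\Delta\widetilde\alpha|$ furnishes the homotopy directly, and the homotopy equivalence $\class\b\simeq|\Delta\b|$ of Fact~\ref{viii} transports it back to $\class$. If you route your argument through $\Delta$ rather than through $\ner$, your proof becomes complete; as written, the appeal to Fact~\ref{i} for a lax functor is unjustified.
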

\begin{fact}\label{iii} If a homomorphism of bicategories has a left or right biadjoint, the map induced on classifying spaces is a homotopy
equivalence. In particular, any biequivalence of bicategories
induces a homotopy equivalence on classifying spaces.
\end{fact}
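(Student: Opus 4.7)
The plan is to deduce Fact~\ref{iii} as a formal corollary of Facts~\ref{i} and~\ref{ii}. Suppose $F:\b\to\c$ admits a right biadjoint $G:\c\to\b$; the biadjunction data provide pseudo-natural transformations $\eta:1_\b\Rightarrow GF$ and $\epsilon:FG\Rightarrow 1_\c$ (the unit and counit) satisfying the triangle identities up to invertible modifications. By Fact~\ref{i}, $\class$ is functorial on $\Hom$, so we obtain continuous maps $\class F$ and $\class G$ together with strict equalities $\class(GF)=\class G\circ \class F$, $\class(FG)=\class F\circ \class G$, $\class 1_\b=1_{\class\b}$, and $\class 1_\c=1_{\class\c}$.

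Next, since every pseudo-natural transformation is simultaneously a lax transformation and an oplax transformation, Fact~\ref{ii} applies to both $\eta$ and $\epsilon$, producing homotopies $1_{\class\b}=\class 1_\b\simeq \class(GF)=\class G\circ \class F$ and $\class F\circ \class G=\class(FG)\simeq \class 1_\c=1_{\class\c}$. Hence $\class G$ is a two-sided homotopy inverse of $\class F$, which is therefore a homotopy equivalence. The case in which $F$ has a left biadjoint is entirely symmetric, exchanging the roles of the unit and counit. For the concluding assertion I would invoke the standard fact that every biequivalence extends to a biadjoint biequivalence: given a pseudo-inverse of $F$, a coherence argument as in \cite{gurski} adjusts the invertible pseudo-natural transformations witnessing the equivalence so that they become the unit and counit of a biadjunction. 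Thus any biequivalence is in particular a homomorphism with both a left and a right biadjoint, and the first part of the statement delivers the desired homotopy equivalence.

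The only delicate point is to ensure the strict functoriality $\class(GF)=\class G\circ \class F$ asserted by Fact~\ref{i}, without which the argument above would only exhibit a zig-zag of homotopies rather than genuine homotopy inverses; this strictness is precisely what is guaranteed by \cite[Theorem~7.1]{ccg}. Beyond that, the proof is a routine application of the general principle that any notion of adjunction preserved by a homotopy-invariant functor turns the adjoint pair into mutually inverse homotopy equivalences, with Fact~\ref{ii} doing all of the topological work via the unit and counit.
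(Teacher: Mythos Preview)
Your argument is correct and is precisely the standard deduction of Fact~\ref{iii} from Facts~\ref{i} and~\ref{ii}. Note that the paper does not supply its own proof of this fact: it is quoted from \cite[(30) and Theorem 7.1]{ccg} as an established result, and the argument you give is the same one recorded there.
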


Furthermore, we should recall that the classifying space of any pseudo-simplicial bicategory
$\f:\Delta^{\!^{\mathrm{op}}}\to \bicat$ is defined \cite[Definition
5.4]{c-c-g} to be the classifying space of its {\em bicategory of
simplices} $\int_{\!\Delta}\!\f$, also called {\em the Grothendieck
construction on $\f$} \cite[\S 3.1]{c-c-g}. That is, the bicategory
whose objects are the pairs $(x,p)$, where $p\geq 0$ is an integer
and $x$ is an object of the bicategory $\f_p$, and whose
hom-categories  are
$$\xymatrix{\int_{\!\Delta}\!\f\big((y,q),(x,p)\big)=\bigsqcup\limits_{[q]\overset{a}\to [p]}\f_q(y,a^*x),}$$
where the disjoint union is over all arrows $a:[q]\to [p]$ in the
simplicial category $\Delta$; compositions, identities,  and
structure constraints are defined in the natural way. We refer the
reader to \cite[\S 3]{c-c-g} for details about the bicategorical
Grothendieck construction trihomomorphism
$$\xymatrix{\int_{\!\Delta}{\!-} :\bicat^{
\Delta^{\mathrm{op}}}\rightarrow \bicat,}$$ from the tricategory of pseudo-simplicial
bicategories to the tricategory of bicategories.
The following facts, concerning  classifying spaces of
pseudo-simplicial bicategories, are proved in \cite[(42), (43),
Proposition 5.5, and Theorem 5.7]{c-c-g} :

\begin{fact}\label{nv} (i) If $\f,\g:\Delta^\mathrm{op}\to \bicat$ are
pseudo-simplicial bicategories, then each pseudo-simplicial
homomorphism $F:\f\to \g$ induces a continuous map $\class\!
\int_{\!\Delta}\!F:\class\! \int_{\!\Delta}\!\f\to \class\!
\int_{\!\Delta}\!\g$.

(ii) For any pseudo-simplicial bicategory
$\f:\Delta^{\!^{\mathrm{op}}}\to \bicat$, there is a homotopy
$$\xymatrix{\class \!\int_{\!\Delta}\!1_\f\simeq
1_{\class\!\int_{\!\Delta}\!\f}:\class\! \int_{\!\Delta}\!\f\to\class\! \int_{\!\Delta}\!\f.}$$

(iii)  For  any pair of composable pseudo-simplicial homomorphism
 $F:\f\to \g$, $G:\g\to \h$, there is a homotopy
$$\xymatrix{\class \!\int_{\!\Delta}\!\! G\ \, \class
\!\int_{\!\Delta}\!\!F\simeq \class \!\int_{\!\Delta}\!
(GF):\class\! \int_{\!\Delta}\!\f\to\class\! \int_{\!\Delta}\!\h.}$$
\end{fact}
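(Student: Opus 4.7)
My plan is to derive all three claims from the trihomomorphism structure of the bicategorical Grothendieck construction $\int_{\!\Delta}\!{-}:\bicat^{\Delta^{\mathrm{op}}}\to\bicat$ recalled just above the statement, combined with Facts \ref{i} and \ref{ii}. By definition, $\class\!\int_{\!\Delta}\!\f$ is the classifying space of the bicategory $\int_{\!\Delta}\!\f$, so every claim reduces to producing an appropriate homomorphism, respectively lax transformation, at the level of these bicategories and then invoking functoriality or homotopy invariance on classifying spaces.

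For part (i), a pseudo-simplicial homomorphism $F:\f\to\g$ is a $1$-cell of the source tricategory $\bicat^{\Delta^{\mathrm{op}}}$; the trihomomorphism $\int_{\!\Delta}\!{-}$ sends it to a $1$-cell of $\bicat$, that is, a genuine homomorphism of bicategories $\int_{\!\Delta}\!F:\int_{\!\Delta}\!\f\to\int_{\!\Delta}\!\g$. Concretely, on objects it acts by $(x,p)\mapsto (F_p x,p)$, and on the hom-category $\bigsqcup_{a:[q]\to[p]}\f_q(y,a^*x)$ it acts by applying $F_q$ and post-composing with the structure pseudo-equivalence $\theta_a:F_q\,a^*\Rightarrow a^*F_p$ of $F$. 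Fact \ref{i} then yields the required continuous cellular map $\class\!\int_{\!\Delta}\!F:\class\!\int_{\!\Delta}\!\f\to\class\!\int_{\!\Delta}\!\g$.

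For parts (ii) and (iii), the trihomomorphism $\int_{\!\Delta}\!{-}$ carries its own unit and composition comparison data: pseudo-equivalences (in particular invertible pseudo-transformations) $\iota_\f:\int_{\!\Delta}\!1_\f\Rightarrow 1_{\int_{\!\Delta}\!\f}$ in $\bicat$, for every pseudo-simplicial bicategory $\f$, and $\chi_{F,G}:\int_{\!\Delta}\!G\circ\int_{\!\Delta}\!F\Rightarrow \int_{\!\Delta}\!(GF)$, for every composable pair $F:\f\to\g$, $G:\g\to\h$. Since a pseudo-equivalence is in particular a lax transformation, Fact \ref{ii} applied to $\iota_\f$ and to $\chi_{F,G}$ produces the desired homotopies $\class\!\int_{\!\Delta}\!1_\f\simeq 1_{\class\!\int_{\!\Delta}\!\f}$ and $\class\!\int_{\!\Delta}\!G\ \class\!\int_{\!\Delta}\!F\simeq \class\!\int_{\!\Delta}\!(GF)$, respectively.

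The main obstacle is packaged into the verification that $\int_{\!\Delta}\!{-}$ really does assemble into a trihomomorphism, as established in \cite{c-c-g}. The delicate points are the pseudo-functoriality of $\int_{\!\Delta}\!F$ on hom-categories (which uses the coherence modifications of $F$ to handle the composition of 1-cells indexed by composable pairs $a,b$ in $\Delta$), and the explicit construction of the pseudo-equivalences $\iota_\f$ and $\chi_{F,G}$ on objects of the form $(x,p)$, whose invertibility is inherited from the invertibility of the structure modifications of $\f$, $F$, and $G$. Once this tricategorical machinery is in place, Fact \ref{nv} is formal: it is nothing more than the composite of the trihomomorphism $\int_{\!\Delta}\!{-}$ with the classifying space functor on $\bicat$, applied at the level of $1$-cells, identities, and composites.
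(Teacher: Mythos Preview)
Your proposal is correct and matches the paper's own treatment: the paper does not give an independent proof of Fact~\ref{nv} but simply records it as a consequence of the trihomomorphism structure of $\int_{\!\Delta}\!{-}$ established in \cite{c-c-g}, which is precisely the mechanism you invoke together with Facts~\ref{i} and~\ref{ii}. Your sketch is exactly the argument one reconstructs from that reference, so there is nothing to add.
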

\begin{fact}\label{vii}
Any pseudo-simplicial transformation $F\Rightarrow G:\f\to\g$
induces a homotopy $$\xymatrix{\class\! \int_{\!\Delta}\!F\simeq
\class\! \int_{\!\Delta}\!G:\class\! \int_{\!\Delta}\!\f\to \class\!
\int_{\!\Delta}\!\g.}$$
\end{fact}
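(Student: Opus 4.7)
The plan is to deduce the homotopy from the two pieces of machinery already in place: the trihomomorphism $\int_{\!\Delta}{-}:\bicat^{\Delta^{\mathrm{op}}}\to\bicat$ of bicategorical Grothendieck construction, and Fact~\ref{ii}, which converts any lax (or oplax) transformation between homomorphisms of bicategories into a homotopy between induced maps on classifying spaces. Concretely, I will produce from the pseudo-simplicial transformation $\alpha:F\Rightarrow G$ a pseudo-transformation $\widehat{\alpha}:\int_{\!\Delta}\!F\Rightarrow\int_{\!\Delta}\!G$ of homomorphisms of bicategories, and then invoke Fact~\ref{ii}.

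Recall that an object of $\int_{\!\Delta}\!\f$ is a pair $(x,p)$ with $x\in\f_p$, while a $1$-cell $(y,q)\to(x,p)$ is given by a pair $(a,u)$ where $a:[q]\to[p]$ is in $\Delta$ and $u:y\to a^*x$ is a $1$-cell in $\f_q$; the $2$-cells are inherited from those of $\f_q$. The homomorphism $\int_{\!\Delta}\!F$ sends $(x,p)$ to $(F_px,p)$ and $(a,u)$ to $(a,F_qu\circ\theta^F_a)$, where $\theta^F_a:F_qa^*\Rightarrow a^*F_p$ is the structure pseudo-equivalence of $F$ (and similarly for $G$). The first step is to define $\widehat{\alpha}$ on an object $(x,p)$ as the $1$-cell $\widehat{\alpha}(x,p)=(1_{[p]},\alpha_p(x)):(F_px,p)\to(G_px,p)$, where $\alpha_p(x):F_px\to G_px$ is the component of the pseudo-transformation $\alpha_p:F_p\Rightarrow G_p$ at $x$. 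On a $1$-cell $(a,u):(y,q)\to(x,p)$ the required structure $2$-cell $\widehat{\alpha}(a,u):\widehat{\alpha}(x,p)\circ \int_{\!\Delta}\!F(a,u)\Rightarrow \int_{\!\Delta}\!G(a,u)\circ \widehat{\alpha}(y,q)$ is built by pasting the naturality $2$-cell $(\alpha_q)_u:\alpha_q(a^*x)\circ F_qu\Rightarrow G_qu\circ \alpha_q(y)$ of the pseudo-transformation $\alpha_q$ on $u$ together with the compatibility invertible modification between $\alpha_p$, $\alpha_q$, $\theta^F_a$ and $\theta^G_a$ that is part of the pseudo-simplicial transformation data.

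The main obstacle is the verification of the coherence axioms of a pseudo-transformation of bicategories for $\widehat{\alpha}$: compatibility with horizontal composition of $1$-cells in $\int_{\!\Delta}\!\f$ and with identities. Horizontal composition of $(a,u)$ and $(b,v)$ in $\int_{\!\Delta}\!\f$ simultaneously involves composition of simplicial operators $ab$, the pseudo-equivalences $\chi_{a,b}^{\f}$, $\chi_{a,b}^{\g}$, and the structure constraints of $\theta^F$, $\theta^G$ and of $\alpha$ with respect to $\chi$. The pasting diagram for $\widehat{\alpha}((a,u)(b,v))$ agrees with that obtained by composing $\widehat{\alpha}(a,u)$ with $\widehat{\alpha}(b,v)$ precisely because of the pseudo-simplicial coherence axioms imposed on $\alpha$ (conditions $(\mathbf{CC5})$--$(\mathbf{CC6})$ of~\cite{c-c-g}); normality of the pseudo-simplicial structure then handles the unit axiom. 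A clean way to package this verification is to invoke the trihomomorphism property of $\int_{\!\Delta}{-}$ itself, which sends a $2$-cell in $\bicat^{\Delta^{\mathrm{op}}}$ (i.e.\ a pseudo-simplicial transformation) to a $2$-cell in $\bicat$ (i.e.\ a pseudo-transformation), so that the coherence essentially reduces to the functoriality already built into the Grothendieck construction.

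Having produced $\widehat{\alpha}:\int_{\!\Delta}\!F\Rightarrow\int_{\!\Delta}\!G$, Fact~\ref{ii} directly yields a homotopy $\class\!\int_{\!\Delta}\!F\simeq\class\!\int_{\!\Delta}\!G$ between the induced cellular maps, which is exactly the content of the statement. I expect that the only substantive work lies in writing down $\widehat{\alpha}(a,u)$ compatibly with the horizontal composition of $1$-cells in $\int_{\!\Delta}\!\f$; all remaining steps are formal consequences of Facts~\ref{i}--\ref{ii} and the trihomomorphism $\int_{\!\Delta}{-}$.
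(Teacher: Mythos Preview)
Your proposal is correct, and it is essentially the argument one would expect: the trihomomorphism $\int_{\!\Delta}{-}$ carries the pseudo-simplicial transformation $\alpha$ to a pseudo-transformation $\int_{\!\Delta}\alpha$ between homomorphisms of bicategories, and then Fact~\ref{ii} produces the homotopy. Note, however, that the paper does not supply its own proof of this statement; it is recorded as a Fact and attributed to \cite[Proposition 5.5]{c-c-g}, so there is no in-paper argument to compare against --- but your outline matches the approach of that reference, and the ``clean packaging'' you mention (invoking the trihomomorphism property directly rather than rebuilding $\widehat{\alpha}$ by hand) is exactly how the coherence verification is avoided in practice.
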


\begin{fact}\label{fvi}If $F:\f\to\g$ is pseudo simplicial
homomorphism, between pseudo simplicial bicategories
$\f,\g:\Delta^{\!^{\mathrm{op}}}\to \bicat$, such that the induced
map $\class F_p:\class \f_p\to\class\g_p$ is a homotopy
equivalence, for all integers $p\geq 0$, then the induced map
$\class\!\int_{\!\Delta}\!F:\class\!\int_{\!\Delta}\!\f\to
\class\!\int_{\!\Delta}\!\g$ is a homotopy equivalence.
\end{fact}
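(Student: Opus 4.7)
The plan is to reduce to the classical fact that a levelwise weak equivalence of good simplicial spaces induces a weak equivalence on geometric realizations, by identifying $\class\!\int_{\!\Delta}\!\f$ with the realization of the simplicial topological space $[p]\mapsto \class\f_p$, naturally in the pseudo-simplicial bicategory $\f$.

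Concretely, for an arbitrary pseudo-simplicial bicategory $\f$, I would first construct a natural homotopy equivalence
\[
e_\f:\class\!\int_{\!\Delta}\!\f\;\simeq\;|[p]\mapsto \class\f_p|.
\]
One route is to write $\class\!\int_{\!\Delta}\!\f=|\diag\ner\ner\!\int_{\!\Delta}\!\f|$ and to compare this diagonal, row by row, with the realization of the simplicial space obtained by taking $\class\f_p$ levelwise. The pseudo-simplicial structure $(\chi,\omega)$ of $\f$ prevents a strict bisimplicial identification, but the standard diagonal-versus-realization lemma for bisimplicial sets, combined with the levelwise biequivalences provided by $(\chi,\omega)$ and Fact~\ref{iii}, yields the desired $e_\f$. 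Naturality of $e$ in $\f$ produces a homotopy commutative square
\[
\xymatrix@R=14pt{
\class\!\int_{\!\Delta}\!\f \ar[r]^{\class\int F}\ar[d]_{e_\f}^{\simeq} & \class\!\int_{\!\Delta}\!\g \ar[d]^{e_\g}_{\simeq}\\
|[p]\mapsto \class\f_p| \ar[r]_{|\class F|} & |[p]\mapsto \class\g_p|,
}
\]
reducing the theorem to showing that $|\class F|$, the realization of the map $p\mapsto \class F_p$, is a homotopy equivalence.

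Since each $\class F_p$ is a homotopy equivalence by hypothesis, and since both simplicial spaces $[p]\mapsto \class\f_p$ and $[p]\mapsto \class\g_p$ are good (the degeneracy maps are cellular inclusions, by Fact~\ref{i} applied to the pseudo-simplicial degeneracies), Segal's realization theorem \cite{segal68} ensures that $|\class F|$ is a homotopy equivalence, and hence so is $\class\!\int_{\!\Delta}\!F$.

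The main obstacle is the construction of $e_\f$ in the genuinely pseudo-simplicial setting: the coherences $\chi,\omega$ make the bisimplicial identification between $\ner\ner\!\int_\Delta\!\f$ and $[p,q]\mapsto\ner_q\f_p$ only weak, and one must verify that this weakness is harmlessly absorbed on passing to diagonals. A possibly cleaner alternative is a skeletal induction on the filtration of $\int_\Delta\f$ by the preimage of the $p$-skeleton of $\Delta$, gluing on a copy of $\class\f_p\times\Delta^p$ along $\class\f_p\times\partial\Delta^p$ at each stage; the inductive step reduces to the gluing lemma for homotopy equivalences, with Fact~\ref{iii} absorbing the non-strictness of the associativity constraints at each gluing. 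Either approach converts the levelwise hypothesis into the desired equivalence on total realizations.
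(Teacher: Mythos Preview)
The paper does not prove this statement; it is cited as a fact from \cite[Theorem 5.7]{c-c-g}, so there is no ``paper's own proof'' to compare against. That said, your proposed argument has a genuine gap worth naming.

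The central object of your reduction, the ``simplicial space $[p]\mapsto\class\f_p$'', does not exist for a \emph{pseudo}-simplicial bicategory. The classifying-space functor $\class:\mathbf{Hom}\to\Top$ of Fact~\ref{i} is an honest functor, but $\f:\Delta^{\mathrm{op}}\to\bicat$ is only a pseudo-functor: the simplicial identities among the $\ner_a\f$ hold only up to the equivalences $\chi_{a,b}$. Consequently the maps $\class(\ner_a\f)$ satisfy the simplicial identities only up to homotopy, and $[p]\mapsto\class\f_p$ is not a simplicial space, so neither its geometric realization nor the goodness claim about its degeneracies makes sense as written. Your square therefore has no well-defined bottom row, and Segal's realization lemma does not apply.

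The standard remedy, and the one underlying the result in \cite{c-c-g}, is to first rectify: produce a genuine simplicial bicategory $\f^{\mathrm{st}}$ together with a pseudo-simplicial homomorphism $\f\to\f^{\mathrm{st}}$ which is a levelwise biequivalence (this is a bicategorical supercoherence/strictification, analogous to Jardine's for pseudo-simplicial categories). One then invokes Fact~\ref{iv}, which \emph{is} only stated for strict simplicial bicategories, to identify $\class\!\int_{\!\Delta}\!\f^{\mathrm{st}}\simeq|\class\f^{\mathrm{st}}|$, and finishes with the levelwise-equivalence argument you sketched. Your skeletal-induction alternative could also be made to work, but it too needs the rectification step (or an equivalent careful treatment of the pseudo-coherences) before any gluing-lemma argument becomes available.
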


\begin{fact}\label{iv} If  $\f:\Delta^{\!{^\mathrm{op}}}\to
\mathbf{Hom}\subset \bicat$ is a simplicial bicategory, then
 there is a natural  homotopy equivalence
$$\xymatrix{ \class\! \int_{\!\Delta}\!\f\simeq |\class \f|,}$$
where the latter is the geometric realization of the simplicial
space $\class\f:\Delta^{\!^{\mathrm{op}}}\to \Top$, obtained by
composing $\f$ with the classifying space functor $\class:\Hom\to
\Top$.
\end{fact}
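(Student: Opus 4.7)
The plan is to exhibit both sides as geometric realizations of essentially the same bisimplicial object, and then to invoke a Fubini-type theorem. Since $\f$ takes values in $\mathbf{Hom}$, faces and degeneracies are strict homomorphisms of bicategories, so I may apply the pseudo-simplicial nerve construction for bicategories (the bicategorical specialization of Theorem \ref{pstc}) levelwise and assemble the results into a bisimplicial category $\ner_{\bullet}\f_{\bullet} : \Delta^{\mathrm{op}} \times \Delta^{\mathrm{op}} \to \cat$, strictly simplicial in the $p$-direction and pseudo-simplicial in the $q$-direction. The right-hand side $|\class \f|$ is then, by the definition of $\class$ on each $\f_{p}$ and the commutation of geometric realization with the classifying-space functor in the $q$-direction, the double realization of this bisimplicial object.

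For the left-hand side, I would unfold the definition $\class \int_{\!\Delta}\!\f = \class \int_{\!\Delta}\!\ner(\int_{\!\Delta}\!\f)$ and construct a natural pseudo-simplicial comparison between $\ner(\int_{\!\Delta}\!\f)$ and the pseudo-simplicial category obtained from $\ner_{\bullet}\f_{\bullet}$ by integrating in the $p$-direction. Concretely, an $n$-simplex of $\ner(\int_{\!\Delta}\!\f)$ packages a chain $[q_{n}] \to \cdots \to [q_{0}]$ of morphisms in $\Delta$ together with coherent $n$-cell data in the bicategories $\f_{q_{i}}$, which matches exactly an $n$-simplex of the $p$-integrated bisimplicial nerve. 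This comparison is at each pseudo-simplicial degree an equivalence of categories, so Fact \ref{iii} gives a levelwise homotopy equivalence of classifying spaces and Fact \ref{fvi} promotes this to a homotopy equivalence on the Grothendieck construction. Applying a Fubini-type theorem for bisimplicial realizations (Segal's diagonal lemma, as in \cite{segal68}) then identifies this with the double realization on the right-hand side, yielding the desired homotopy equivalence; naturality in $\f$ follows from the naturality of all constructions involved, namely the nerve, the Grothendieck construction, and the Fubini identification.

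The main obstacle lies in the second step, where one must build the pseudo-simplicial comparison homomorphism carefully and verify the coherence axioms (those governing the structure equivalences $\chi$ and modifications $\omega$ of pseudo-simplicial bicategories developed in Section 3 and in \cite{c-c-g}). Matching the associator and unitor data coming from the pseudo-simplicial nerve of the bicategory $\int_{\!\Delta}\!\f$ with those arising from the $p$-integration of the bisimplicial data is essentially a bicategorical analogue of Thomason's homotopy colimit theorem, and the bookkeeping of the structure modifications — rather than any conceptual hurdle — is the technical heart of the proof.
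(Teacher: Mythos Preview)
The paper does not actually prove this statement: it is recorded as a \emph{Fact} and explicitly attributed to \cite[(42), (43), Proposition~5.5, and Theorem~5.7]{c-c-g}, where the bicategorical analogue of Thomason's homotopy-colimit theorem is established. So there is no in-paper argument to compare your proposal against.

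That said, your outline is along the right lines and matches what one expects the cited proof to do: recognise $|\class\f|$ as the homotopy colimit (over $\Delta^{\mathrm{op}}$) of the spaces $\class\f_p$, and identify $\class\!\int_{\!\Delta}\!\f$ with the same homotopy colimit via a bisimplicial/Fubini argument and the categorical Thomason theorem applied to the pseudo-simplicial nerve. Two places in your sketch would need tightening before it could stand on its own. First, the assertion that the comparison between $\ner_q(\int_{\!\Delta}\!\f)$ and the $p$-integration of $\ner_q\f_\bullet$ is ``at each pseudo-simplicial degree an equivalence of categories'' is the whole content of the theorem; you have described the objects on both sides but not supplied the functor or the argument (this is where the actual Thomason-style proof lives, and where the hypothesis that $\f$ lands in $\mathbf{Hom}$, rather than merely in $\bicat$, is used to make the $p$-direction genuinely simplicial). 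Second, your invocation of a ``Fubini-type theorem'' must handle a bisimplicial object that is only pseudo-simplicial in one direction; the standard diagonal lemma applies to honest bisimplicial sets or spaces, so one has to first pass from the pseudo-simplicial nerve to something strictly simplicial (e.g.\ via the supercoherence/rectification results used in \cite{c-c-g}) before the diagonal argument goes through. These are exactly the technicalities that \cite{c-c-g} is cited to supply.
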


\subsection{The classifying space construction for tricategories}
 We are now ready
to set the following definition, which recovers the more traditional
way through which a classifying space is assigned in the literature
to certain specific kinds of tricategories, such as 3-categories,
bicategories, monoidal categories, or braided monoidal categories
(see Examples \ref{ex3cat}, \ref{emb}, and \ref{ebmc} below, also
\cite{b-f-s-v,c-c-g} or \cite{jardine} and references therein).

\begin{definition}\label{cst}
The classifying space $\class\T$, of a tricategory $\T$, is the
classifying space of its bicategorical pseudo-simplicial
Grothendieck nerve {\em (\ref{bsnt})},
$\ner\T:\Delta^{\!{\mathrm{op}}}\to \bicat$, that is,
\begin{equation}\label{ecst} \xymatrix{\class\T:=\class \!\int_{\!\Delta}\!\!\ner\T}.
\end{equation}
\end{definition}

Let us remark that the classifying space of a tricategory $\T$ is
then realized as the classifying space of a category canonically
associated to it, namely, as
$$\xymatrix{\class\T=
\class\!\int_{\!\Delta}\!\!\ner\big(\!\int_{\!\Delta}\!\!\ner\T\big)=
|\ner\big(\int_{\!\Delta}\!\!\ner\big(\!\int_{\!\Delta}\!\!\ner\T\big)\big)|.}$$

The next two theorems deal with the basic properties concerning with the
homotopy behavior of the classifying space construction, $\T\mapsto
\class\T$, with respect to trihomomorphisms of tricategories.

\begin{theorem}\label{clastri} (i) Any trihomomorphism between tricategories $H:\T\to\T'$ induces a
continuous map $\class H:\class\T\to\class\T'$.

(ii) For any pair of composable trihomomorphisms $H:\T\to \T'$ and
$H':\T'\to \T''$, there is a homotopy $$\class H'\ \class H\simeq
\class (H'H):\class\T\to\class\T''.$$

(iii) For any tricategory $\T$, there is a homotopy $\class
1_\T\simeq 1_{\class \T}:\class\T\to\class\T.$
\end{theorem}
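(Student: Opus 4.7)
The plan is to obtain all three statements as formal consequences of Theorem \ref{psth} (which transfers the corresponding structural results from tricategories to their pseudo-simplicial bicategorical Grothendieck nerves) together with the homotopical properties of the classifying space functor $\class\!\int_{\!\Delta}\!(-)$ recorded in Facts \ref{i}--\ref{iv}. Since, by Definition \ref{cst}, $\class\T=\class\!\int_{\!\Delta}\!\ner\T$, the whole argument consists of chasing diagrams of functors/homotopies through two layers of already-established machinery.

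For part (i), given a trihomomorphism $H:\T\to\T'$, Theorem \ref{psth}(i) produces a normal pseudo-simplicial homomorphism $\ner H:\ner\T\to\ner\T'$. Then Fact \ref{nv}(i) yields a continuous map, which I would \emph{define} to be $\class H := \class\!\int_{\!\Delta}\!\ner H : \class\T\to\class\T'$.

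For part (ii), Theorem \ref{psth}(ii) gives a pseudo-simplicial pseudo-equivalence $\ner H'\ \ner H\Rightarrow \ner(H'H)$, which by Fact \ref{vii} induces a homotopy on classifying spaces
\[
\class\!\int_{\!\Delta}\!(\ner H'\ \ner H)\simeq \class\!\int_{\!\Delta}\!\ner(H'H)=\class(H'H).
\]
Composing this with the homotopy provided by Fact \ref{nv}(iii),
\[
\class\!\int_{\!\Delta}\!\ner H'\ \,\class\!\int_{\!\Delta}\!\ner H\simeq \class\!\int_{\!\Delta}\!(\ner H'\ \ner H),
\]
yields $\class H'\ \class H\simeq \class(H'H)$, as required. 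Part (iii) follows analogously: Theorem \ref{psth}(iii) supplies a pseudo-simplicial pseudo-equivalence $\ner 1_\T\Rightarrow 1_{\ner\T}$, Fact \ref{vii} converts it into a homotopy $\class 1_\T=\class\!\int_{\!\Delta}\!\ner 1_\T\simeq \class\!\int_{\!\Delta}\!1_{\ner\T}$, and Fact \ref{nv}(ii) chains to a further homotopy $\class\!\int_{\!\Delta}\!1_{\ner\T}\simeq 1_{\class\!\int_{\!\Delta}\!\ner\T}=1_{\class\T}$.

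There is no real conceptual obstacle to overcome: all of the hard work has already been absorbed into Theorem \ref{psth} (where the pseudo-simplicial structure pseudo-equivalences and modifications were constructed from the trihomomorphism data using the biadjoint biequivalence of Lemma \ref{rufc}) and into the classifying space machinery of \cite{c-c-g} encapsulated in Facts \ref{i}--\ref{iv}. The only point requiring modest care is to ensure that the homotopies produced by Facts \ref{nv} and \ref{vii} are chained in the correct order so that the source and target maps match up, but this is a bookkeeping matter and presents no genuine difficulty.
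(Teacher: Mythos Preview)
Your proposal is correct and follows essentially the same route as the paper's own proof: each part is obtained by feeding the output of the corresponding clause of Theorem~\ref{psth} into the appropriate Fact (\ref{nv}(i) for the induced map, \ref{vii} plus \ref{nv}(iii) for compositionality, and \ref{vii} plus \ref{nv}(ii) for the identity), with $\class H$ defined as $\class\!\int_{\!\Delta}\!\ner H$. The only minor imprecision is the blanket reference to Facts~\ref{i}--\ref{iv} in your overview; in fact only Facts~\ref{nv} and~\ref{vii} are invoked.
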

\begin{proof} $(i)$ By Theorem \ref{psth} $(i)$, any trihomomorphism
$H:\T\to \T'$ gives rise to a pseudo-simplicial homomorphism on the
corresponding Grothendieck nerves $\ner H:\ner\T\to\ner \T'$, which,
by Fact \ref{nv} $(i)$, produces the claimed continuous map $\class
H=\class\!\int_{\!\Delta}\!\ner H:\class\T\to\class \T'$.

\vspace{0.2cm} $(ii)$ Suppose $\T\overset{H}\to \T'\overset{H'}\to
\T''$ are trihomomorphisms. By Theorem \ref{psth} $(ii)$, there is a
pseudo-simplicial pseudo-equivalence $\ner H'\ \ner H\Rightarrow
\ner (H'H)$, which, by Fact \ref{vii}, induces a homotopy
$$\xymatrix{ \class \!\int_{\!\Delta}\!(\ner H'\,\ner H)\simeq
\class\! \int_{\!\Delta}\!\ner( H'H)=\class(H'H).}$$ Then, the
result follows since, by Fact \ref{nv} $(iii)$, there is a homotopy
$$\xymatrix{\class \!\int_{\!\Delta}\!(\ner H'\,\ner H)\simeq
\class \!\int_{\!\Delta}\!\ner H'\,  \class \!\int_{\!\Delta}\!\ner
H =\class H'\,\class H.}$$

\vspace{0.2cm} $(iii)$  By Theorem \ref{psth} $(iii)$,  there is a
pseudo-simplicial pseudo-equivalence $\ner 1_\T\Rightarrow
1_{\ner\!\T}$, which, by Fact \ref{vii}, induces a homotopy
$\xymatrix{\class 1_\T= \class\!\int_{\!\Delta}\!\ner 1_\T\simeq
\class \!\int_{\!\Delta}\! 1_{\ner\T}}$. Since,  by Fact \ref{nv}
$(ii)$, there is a homotopy $\xymatrix{\class \!\int_{\!\Delta}\!
1_{\ner\T}\simeq \class
1_{\!\int_{\!\Delta}\!\ner\T}=1_{\class\T}}$, the result follows.
\end{proof}

\begin{theorem}\label{trahom} If $F,G:\T\to\T'$ are two trihomomorphisms between
tricategories, then any tritransformation, $F\Rightarrow G$,
canonically defines a homotopy between the induced maps on
classifying spaces, $ \class F\simeq \class G:\class\T\to\class\T'$.
\end{theorem}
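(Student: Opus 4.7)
The plan is to mirror the strategy used for Theorems~\ref{psth} and~\ref{clastri}: upgrade the tritransformation $\sigma:F\Rightarrow G$ to a pseudo-simplicial transformation $\ner\sigma:\ner F\Rightarrow \ner G$ of pseudo-simplicial homomorphisms between the Grothendieck nerves, and then invoke Fact~\ref{vii} to produce the desired homotopy $\class F=\class\!\int_{\!\Delta}\!\ner F\simeq \class\!\int_{\!\Delta}\!\ner G=\class G$.

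The first step is to extend Lemma~\ref{transtri} to tritransformations: for every small category $I$, the tritransformation $\sigma$ should induce a pseudo-equivalence $\sigma_*:F_*\Rightarrow G_*:\bhrep(I,\T)\to\bhrep(I,\T')$, natural in $I$ in the sense that $\sigma_*\,a^*=a^*\sigma_*$ for every functor $a:J\to I$. At a homomorphic representation $X:I\to\T$, the $1$-cell $\sigma_*X:F_*X\Rightarrow G_*X$ of $\bhrep(I,\T')$ assigns to each arrow $a:j\to i$ of $I$ a $2$-cell component built from the adjoint equivalence datum of $\sigma$ evaluated at the $1$-cell $Xa$, while the structure $3$-cells $(\sigma_*X)_{a,b}$ and $(\sigma_*X)_i$ are produced by pasting the modification data of $\sigma$ with the coherence constraints of $F_*X$, $G_*X$ and of $\T'$; naturality at $1$-cells and $2$-cells of $\bhrep(I,\T)$ is treated analogously using the remaining modification data of $\sigma$.

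Next I would transport $\sigma_*$ through the biadjoint biequivalences $L_p\dashv R_p:\ner_p\T\rightleftarrows\bhrep([p],\T)$ of Lemma~\ref{rufc}, in precisely the fashion used to define $\ner_{\!a}\!\T$ and the structure pseudo-equivalence $\theta_a$ in the proof of Theorems~\ref{pstc}--\ref{psth}, thereby defining at each simplicial level a pseudo-equivalence $(\ner\sigma)_p = R\,\sigma_*\,L : \ner_pF\Rightarrow \ner_pG$. Naturality of $\sigma_*$ in $I$ then supplies, for each simplicial operator $a:[q]\to[p]$, an invertible modification mediating the square formed by $(\ner\sigma)_p$, $(\ner\sigma)_q$ and the face/degeneracy homomorphisms $\ner_{\!a}\!\T$, $\ner_{\!a}\!\T'$; this modification is obtained by pasting the adjunction cells of $\v$ and $\v^\bullet$ (and of $\v\circ\v^\bullet\cong 1$) with the naturality equality $\sigma_*\,a^*=a^*\sigma_*$. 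These data together constitute the pseudo-simplicial pseudo-transformation $\ner\sigma:\ner F\Rightarrow \ner G$.

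The main obstacle will be the verification of the coherence conditions (namely (\textbf{CC5}) and (\textbf{CC6}) of~\cite{c-c-g}) for $\ner\sigma$. This reduces, via repeated use of Fact~\ref{fpre1} to simplify long pastings, to checking that certain diagrams built from the tritransformation coherence modifications of $\sigma$, the structural data of $\T'$, and the adjunction invertible modifications of $L_p\dashv R_p$ commute, which ultimately follows from the tritransformation axioms. Once $\ner\sigma$ has been established, Fact~\ref{vii} delivers the desired homotopy $\class F\simeq \class G:\class\T\to\class\T'$.
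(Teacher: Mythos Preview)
Your approach has a genuine obstruction at the very first step. Recall from the paper's description of $\brep(I,\T')$ that a $1$-cell $\Phi:F_*X\Rightarrow G_*X$ in $\bhrep(I,\T')$ exists \emph{only} when $F_*X$ and $G_*X$ agree on objects, i.e.\ when $FXi=GXi$ for every object $i$ of $I$; these $1$-cells are icons, not general transformations. Likewise, at the level of Grothendieck nerves, the bicategory $\ner_p\T'$ is the coproduct $\bigsqcup_{(t_0,\ldots,t_p)}\T'(t_1,t_0)\times\cdots\times\T'(t_p,t_{p-1})$, and there are no $1$-cells between objects lying in distinct components. A tritransformation $\sigma:F\Rightarrow G$ has component $1$-cells $\sigma_t:Ft\to Gt$ in $\T'$ with $Ft$ and $Gt$ \emph{different} objects in general, so neither your proposed $\sigma_*X$ nor your $(\ner\sigma)_p=R\,\sigma_*\,L$ can exist as a $1$-cell or pseudo-transformation in the bicategories in question. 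Your scheme would go through only under the additional hypothesis that $F$ and $G$ agree on objects.

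This is precisely why the paper takes a different route: it builds a trihomomorphism $H:\T\times[1]\to\T'$ restricting to $F$ and $G$ along the two ends (the classical cylinder construction, with the components $\theta_p:Fp\to Gp$ of the tritransformation supplying the images of the ``vertical'' $1$-cells $(p,1)\to(p,0)$), and then applies $\class$ together with Theorem~\ref{clastri}$(ii)$ and $\class[1]\cong[0,1]$ to extract the homotopy. The cylinder packaging is what allows the object-level discrepancy between $F$ and $G$ to be absorbed.
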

\begin{proof} Suppose $\theta=(\theta,\Pi,\mathrm{M}):F\Rightarrow
G:\T\to\T'$ is a tritransformation. There is a trihomomorphism
$H:\T\times [1]\to \T'$ making  the diagram commutative
\begin{equation}\label{homo}\begin{array}{c}
\xymatrix@C=60pt@R=20pt{\T\!\times\![0]\cong \T \ar[d]_{  1\times\delta_0}\ar[dr]^{  F}&
\\ \T\times[1]\ar[r]^{  H}&\T',\\
\T\times [0]\cong \T\ar[ru]_{  G}\ar[u]^{  1\times \delta_1}&}
\end{array}
\end{equation}
such that, for any objects $p,q$ of $\T$, the homomorphism
$$H=H_{(p,1)(q,0)}:\T\!\times\![1]((p,1),(q,0))\longrightarrow
\T'(Fp,Gq)$$ is the composite of
$$\xymatrix@C=40pt{\T(p,q)\!\times\! \{(1,0)\}\cong
\T(p,q)\overset{G}\longrightarrow
\T'(Gp,Gq)\ar[r]^(.7){\T'(\theta_p,1)}&\T'(Fp,Gq).}$$ For objects
$p,q,r$ of $\T$, the pseudo-equivalence
$$
\xymatrix@C=45pt{(\T\!\times\![1])((q,0),(r,0))\times (\T\!\times\![1])((p,1),(q,0))\ar[r]^(.6){H\times H}
\ar[d]_{\otimes}
\ar@{}[rd]|{\Downarrow\chi^H}&\T'(Gq,Gr)\times
\T'(Fp,Gq)\ar[d]^{\otimes}\\ \T\!\times\![1]((p,1),(r,0))\ar[r]^(.6){H}&\T'(Fp,Gr)}
$$
is obtained by pasting the diagram
$$
\xymatrix@C=40pt{\T(q,r)\!\times\!\T(p,q)\ar@{}[rd]|{\Downarrow \chi^G}\ar[d]_{\otimes}\ar[r]^-{G\times G}&\T'(Gq,Gr)\!\times\!\T'(Gp,Gq)
\ar[d]^{\otimes}\ar[r]^{1\times\T'(\theta_p,1)}\ar@{}[rd]|{\cong}&\T'(Gq,Gr)\!\times\!\T'(Fp,Gq)\ar[d]^{\otimes} \\
\T(p,r)\ar[r]^(.4){G}&\T'(Gp,Gr)\ar[r]^{\T'(\theta_p,1)}&\T'(Fp,Gr),}
$$
and the pseudo-equivalence
$$
\xymatrix@C=45pt{(\T\!\times\![1])((q,1),(r,0))\times (\T\!\times\![1])((p,1),(q,1))\ar[r]^(.6){H\times H}
\ar[d]_{\otimes}
\ar@{}[rd]|{\Downarrow\chi^H}&\T'(Fq,Gr)\times
\T'(Fp,Fq)\ar[d]^{\otimes}\\ \T\!\times\![1]((p,1),(r,0))\ar[r]^(.6){H}&\T'(Fp,Gr)}
$$
by pasting in
$$
\xymatrix@C=40pt@R=15pt{\T(q,r)\!\times\!\T(p,q)\ar[r]^-{G\times F}\ar[rd]_{F\times F}\ar[ddd]_{\otimes}&
\T'(Gq,Gr)\!\times\!\T'(Fp,Fq)\ar@{}[lddd]|{\Downarrow \chi^F}\ar@{}[rddd]|{\cong}\ar@{}[d]|{\Downarrow \theta^\bullet\times 1}\ar[r]^{\T'(\theta_q,1)\times 1}&
\T'(Fq,Gr)\!\times\!\T'(Fp,Fq)\ar[ddd]^{\otimes}\\
&\T'(Fq,Fr)\!\times\!\T'(Fp,Fq)\ar[ru]_{\T'(1,\theta_r)\times 1}\ar[d]^{\otimes}
&\\
&\T'(Fp,Fr)\ar@{}[d]|{\Downarrow\theta}\ar[rd]^{\T'(1,\theta_r)}&\\
\T(p,r)\ar[ru]^{F}\ar[r]^{G}&\T'(Gp,Gr)\ar[r]^{\T'(\theta_p,1)}&\T'(Fp,Gr).}
$$

For $p,q,r,s$ any objects of $\T$, the component of the invertible
modification $\omega^H$ at the triples of composable 1-cells of
$\T\!\times\![1]$
$$\begin{array}{l}\xymatrix{(p,1)\ar[r]^{(z,(1,0))}&(q,0)\ar[r]^{(y,1_0)}&(r,0)\ar[r]^{(x,1_0)}&(s,0),}\\
\xymatrix{(p,1)\ar[r]^{(z,1_1))}&(q,1)\ar[r]^{(y,(1,0))}&(r,0)\ar[r]^{(x,1_0)}&(s,0),}\\
\xymatrix{(p,1)\ar[r]^{(z,1_1))}&(q,1)\ar[r]^{(y,1_1))}&(r,1)\ar[r]^{(x,(1,0))}&(s,0),}
\end{array}$$
are canonically provided by the 3-cells (\ref{1omega}),
$(\ref{2omega})$ and $(\ref{3omega})$ below.
\begin{equation}\label{1omega}\begin{array}{c}
\xymatrix@C=40pt@R=15pt{((Gx\!\otimes\!Gy)\!\otimes\!Gz)\!\otimes\!\theta_p\ar@{=>}
[dd]_{\boldsymbol{a}\otimes 1}
\ar@{=>}[rr]^{(\chi^G\otimes 1)\otimes 1}
\ar@{}[rrdd]|{\Lleftarrow\,\omega^G\otimes 1}&
&
(G(x\!\otimes\!y)\!\otimes\!Gz)\!\otimes\!\theta_p
\ar@{=>}[d]\ar@{}@<4pt>[d]^{\chi^G\otimes  1}\\
&&
G((x\!\otimes\!y)\!\otimes\!z)\!\otimes\!\theta_p\ar@{=>}[d]^{G\boldsymbol{a}\otimes 1}\\
(Gx\!\otimes\!(Gy\!\otimes\!Gz))\!\otimes\!\theta_p\ar@{=>}[r]^{(1\otimes\chi^G)\otimes 1}&
(Gx\!\otimes\!(G(y\!\otimes\!z))\!\otimes\!\theta_p\ar@{=>}[r]^{\chi^G\otimes 1}
&
G(x\!\otimes\!(y\!\otimes\!z))\!\otimes\!\theta_p.
}\end{array}
\end{equation}
\begin{equation}\label{2omega}\begin{array}{c}
\xymatrix{(Gx\!\otimes\!Gy)\!\otimes\!(\theta_q\!\otimes\!Fz)\ar@{=>}[r]^{\chi^G\otimes 1}\ar@{=>}[d]_{1\otimes \theta}
\ar@{}[rd]|{\cong}&G(x\!\otimes\!y)\!\otimes\!(\theta_q\!\otimes\!Fz)
\ar@{=>}[d]^{1\otimes \theta}\\ (Gx\!\otimes\!Gy)\!\otimes\!(Gz\!\otimes\!\theta_p)
\ar@{=>}[r]^{\chi^G\otimes 1}&G(x\!\otimes\!y)\!\otimes\!(Gz\!\otimes\!\theta_p)
}
\end{array}\end{equation}
\begin{equation}\label{3omega}\begin{array}{c}
\xymatrix@C=35pt@R=15pt{Gx\!\otimes\!((\theta_r\!\otimes\!Fy)\!\otimes\!Fz)\ar@{=>}[r]^{1\otimes(\theta\otimes 1)}
\ar@{=>}[d]_{1\otimes\boldsymbol{a}}\ar@{}[rrdd]|{\Lleftarrow\, 1\otimes \Pi}&
Gx\!\otimes\!((Gy\!\otimes\!\theta_q)\!\otimes\!Fz)\ar@{=>}[r]^{1\otimes\boldsymbol{a}}&
Gx\!\otimes\!(Gy\!\otimes\!(\theta_q\!\otimes\!Fz))\ar@{=>}[d]^{1\otimes(1\otimes\theta)}\\
Gx\!\otimes\!(\theta_r\!\otimes\!(Fy\!\otimes\!Fz))\ar@{=>}[d]^{1\otimes(1\otimes\chi^F)}
&&Gx\!\otimes\!(Gy\!\otimes\!(Gz\!\otimes\!\theta_p))\ar@{=>}[d]^{1\otimes\boldsymbol{a}^\bullet}\\
Gx\!\otimes\!(\theta_r\!\otimes\!F(y\!\otimes\!z))\ar@{=>}[r]^{1\otimes \theta}&
Gx\!\otimes\!(G(y\!\otimes\!z)\!\otimes\!\theta_p)
&Gx\!\otimes\!((Gy\!\otimes\!Gz)\!\otimes\!\theta_p)\ar@{=>}[l]_{1\otimes (\chi^G\otimes 1)}
}\end{array}
\end{equation}

To finish the description of the homomorphism $H$, say that the
component of the invertible modification $\delta^H$ at any morphism
$(x,(1,0)):(p,1)\to (q,0)$ is canonically obtained from the 3-cells
$1\otimes \mathrm{M}$ and $\delta^G\otimes 1$ below, while the
component of $\gamma^H$ is provided by 3-cell $\gamma^G\otimes 1$.
$$\xymatrix@C=40pt{Gx\!\otimes\!
\theta_p\ar@{=>}[r]^{1\otimes\boldsymbol{r}}
\ar@{}[rrd]|{\Lleftarrow\, 1\otimes \mathrm{M} }
\ar@{=>}[d]_{1\otimes\,\boldsymbol{l}^\bullet}
&Gx\!\otimes\!(\theta_p\!\otimes\!1) \ar@{=>}[r]^{1\otimes(1\otimes
\iota^F)}&Gx\!\otimes\!(\theta_p\!\otimes\!F1)
\ar@{=>}[d]^{1\otimes \theta}\\
Gx\!\otimes\!(1\otimes\theta_p)\ar@{=>}[rr]^(.35){1\otimes(\iota^G\otimes 1)}&&Gx\!\otimes\!
(G1\!\otimes\!\theta_q),
}
$$
$$
\xymatrix@C=40pt{Gx\!\otimes\!\theta_p\ar@{=>}[r]^{\boldsymbol{r}\otimes 1}
\ar@{=>}[d]_{G\boldsymbol{r}\otimes 1} \ar@{}[rd]|{\delta^G\otimes 1\,\Rrightarrow}
&(Gx\!\otimes\!1)\!\otimes\!\theta_p
\ar@{=>}[d]^{(1\otimes\iota^G)\otimes 1}
\\
G(x\!\otimes\!1)\!\otimes\!\theta_p&(Gx\!\otimes\!G1)\!\otimes\!\theta_p,
\ar@{=>}[l]_{\chi^G\otimes 1}}
\xymatrix@C=40pt{(1\!\otimes\!Gx)\!\otimes\!\theta_p\ar@{=>}[r]^{(\iota^G\otimes 1)\otimes 1}
\ar@{=>}[d]_{\boldsymbol{l}\otimes 1}  \ar@{}[rd]|{\Lleftarrow\,\gamma^G\otimes 1}
&(G1\!\otimes\!Gx)\!\otimes\!\theta_p\ar@{=>}[d]^{\chi^G\otimes 1}\\
Gx\!\otimes\!\theta_p&G(1\!\otimes\!x)\!\otimes\!\theta_p.\ar@{=>}[l]_{G\boldsymbol{l}\otimes 1}}
$$

We are now ready to complete the proof of the theorem: Applying the
classifying space construction to diagram (\ref{homo}), we obtain a
diagram of maps
$$
\xymatrix@C=60pt{\class\T\times \class[0]\cong \class\T \ar[d]_{  1\times \delta_0}
\ar[dr]^{  \class F}& \\
\class\T\times\class[1]\ar[r]^{   \class H}&\class\T',\\
\class\T\times \class[0]\cong \class\T\ar[ru]_{  \class G}\ar[u]^{  1\times\delta_1}&}
$$
where, by Theorem \ref{clastri} $(ii)$, both triangles are homotopy
commutative. Since $\class[1]=[0,1]$, the unity interval, the result
follows.
\end{proof}

As a relevant consequence for triequivalences between tricategories
\cite[Definition 3.5]{g-p-s}, we have the following:
\begin{theorem}\label{theoclasstri} $(i)$ If $F:\T\to \T'$ is any trihomomorphism such
that there are a trihomomorphism $G:\T'\to\T$ and tritransformations
$FG\Rightarrow 1_{T'}$ and $1\Rightarrow GF$, then the induced map
$\class F:\class\T\to\class \T'$ is a homotopy equivalence.

$(ii)$ Any triequivalence of tricategories induces a homotopy
equivalence on classifying spaces.
\end{theorem}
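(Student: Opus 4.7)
\begin{proof}
The proof is a direct application of the properties of the classifying space construction established in Theorems \ref{clastri} and \ref{trahom}.

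$(i)$ Suppose $F:\T\to\T'$ is a trihomomorphism equipped with $G:\T'\to\T$ and tritransformations $\alpha:FG\Rightarrow 1_{\T'}$ and $\beta:1_\T\Rightarrow GF$. By Theorem \ref{clastri} $(i)$, $F$ and $G$ induce continuous maps $\class F:\class\T\to\class\T'$ and $\class G:\class\T'\to\class\T$. By Theorem \ref{trahom}, the tritransformation $\alpha$ induces a homotopy $\class(FG)\simeq \class 1_{\T'}$, and the tritransformation $\beta$ induces a homotopy $\class 1_{\T}\simeq \class(GF)$. Combining these with the homotopies given by Theorem \ref{clastri} $(ii)$ and $(iii)$, namely
$$\class F\ \class G\simeq \class(FG),\quad \class G\ \class F\simeq \class(GF),\quad \class 1_\T\simeq 1_{\class\T},\quad \class 1_{\T'}\simeq 1_{\class\T'},$$
we obtain homotopies $\class F\ \class G\simeq 1_{\class\T'}$ and $\class G\ \class F\simeq 1_{\class\T}$. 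Hence $\class F$ is a homotopy equivalence with homotopy inverse $\class G$.

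$(ii)$ By the coherence theorem for tricategories \cite[Theorem 9.1]{g-p-s} (see also \cite{gurski}), every triequivalence of tricategories $F:\T\to\T'$ admits a quasi-inverse trihomomorphism $G:\T'\to\T$ together with tritransformations $FG\Rightarrow 1_{\T'}$ and $1_\T\Rightarrow GF$ (indeed, a triequivalence is precisely an internal equivalence in the tricategory $\mathbf{Tricat}$). Therefore part $(i)$ applies and $\class F:\class\T\to\class\T'$ is a homotopy equivalence.
\end{proof}

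The only step that requires invocation of an external fact is the passage, in $(ii)$, from the definition of triequivalence to the existence of a quasi-inverse with the two coherent tritransformations; once that is in hand, the argument is purely formal and relies only on the functorial and homotopical properties of the classifying space construction already developed.
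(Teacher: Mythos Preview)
Your proof is correct and follows essentially the same route as the paper's own argument: both use Theorems \ref{clastri} and \ref{trahom} to chain together the homotopies $\class F\,\class G\simeq \class(FG)\simeq \class 1_{\T'}\simeq 1_{\class\T'}$ (and symmetrically for $GF$). The only difference is that for part $(ii)$ the paper simply asserts that it ``clearly follows from part $(i)$'', whereas you spell out that a triequivalence in the sense of \cite[Definition 3.5]{g-p-s} admits a quasi-inverse with the needed tritransformations; this extra justification is reasonable, though the precise reference you give (Theorem 9.1 of \cite{g-p-s}) is the coherence theorem rather than the statement about quasi-inverses of triequivalences.
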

\begin{proof}
$(i)$ Given any trihomomorphism $F:\T\to\T'$ in the hypothesis, by
Theorem \ref{clastri} $(ii)$, there is a homotopy $\class F\,\class
G\simeq \class(FG)$. By Theorem \ref{trahom},  the
existence of a homotopy $\class (FG)\simeq \class 1_{\T'}$ follows. Since,
by Theorem \ref{clastri} $(iii)$, there is a homotopy $\class
1_{\T'}\simeq 1_{\class\T'}$, we conclude the existence of a
homotopy $\class F\,\class G\simeq 1_{\class\T'}$.  Analogously, we
can prove that $1_{\class \T}\simeq \class G\,\class F$, which
completes the proof.

Part $(ii)$ clearly follows from part $(i)$.
\end{proof}

\subsection{Example: Classifying spaces of $3$-categories}\label{ex3cat} In \cite{segal68}, Segal observed that, if $\mathbb{C}$ is a
topological category, then its Grothendieck nerve (\ref{ncat}) is, in
a natural way, a simplicial space, that is,
$\ner\mathbb{C}:\Delta^{\!^{\mathrm{op}}}\to \Top$. Then, he defines
the \emph{classifying space} of a topological category $\mathbb{C}$
to be $|\ner\mathbb{C}|$,
 the realization of this simplicial space.

This notion given by Segal provides, for instance, the usual
definition of classifying spaces of strict tricategories, or
3-categories (hence of categories and 2-categories, which can be
regarded as special 3-categories). A 3-category $\T$ is just a
category enriched in the category of 2-categories and 2-functors,
that is, a category $\T$ endowed with 2-categorical hom-sets
$\T(t',t)$, in such a way that the compositions $\T(t',t)\times
\T(t'',t')\rightarrow \T(t'',t)$ are 2-functors. By replacing the
hom 2-categories $\T(t',t)$ by their classifying spaces, we obtain a
topological category, say $\mathbb{C}_\T$, with discrete space of
objects and whose hom-spaces are
$\mathbb{C}_\T(t',t)=\class\T(t',t)$. The classifying space of this
topological category $|\ner\mathbb{C}_\T|$ is, by definition, the
classifying space  of the 3-category $\T$.

\begin{theorem}\label{the3cat}
For any $3$-category $\T$ there are homotopy equivalences
$$|\ner\mathbb{C}_\T|\simeq \class\T\simeq |\diag\ner\ner\ner\T|,$$
where
$\ner\ner\ner\T:\Delta^{\!{\mathrm{op}}}\!\times\!\Delta^{\!{\mathrm{op}}}\!\times\!
\Delta^{\!{\mathrm{op}}}\to \set$, $([p],[q],[r])\mapsto
\ner_{\!r}\ner_{q}\ner_{p}\T$, is the trisimplicial set {\em triple
nerve} of the 3-category, and
$\diag\ner\ner\ner\T:\Delta^{\mathrm{op}}\to\set$, $[p]\mapsto
\ner_{p}\ner_{p}\ner_{p}\T$, its diagonal simplicial set.
\end{theorem}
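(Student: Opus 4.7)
The plan is to exploit the fact that when $\T$ is strict, its Grothendieck nerve $\ner\T$ is not merely pseudo-simplicial but genuinely simplicial, with values in $2$-categories viewed inside $\mathbf{Hom}$. Indeed, in a $3$-category composition is strictly associative and unital, so the pseudo-equivalences $\chi_{a,b}$ of (\ref{chis}) and the modifications $\omega_{a,b,c}$ of (\ref{ws}), all canonically built from $\boldsymbol{a},\boldsymbol{l},\boldsymbol{r},\pi,\mu,\lambda,\rho$, reduce to identities. Thus $\ner\T\colon\Delta^{\mathrm{op}}\to \mathbf{Hom}$ is a simplicial bicategory, and Fact \ref{iv} immediately yields a homotopy equivalence
\begin{equation*}
\class\T \;=\; \class\!\int_{\!\Delta}\!\ner\T \;\simeq\; |\class\ner\T|,
\end{equation*}
where $\class\ner\T\colon\Delta^{\mathrm{op}}\to \Top$ is the simplicial space $[p]\mapsto \class\ner_p\T$. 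This observation is the common starting point for both asserted equivalences.

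For the first equivalence $\class\T\simeq |\ner\mathbb{C}_\T|$, I would identify $\class\ner\T$ with $\ner\mathbb{C}_\T$ up to levelwise homotopy. Since $\ner_p\T=\bigsqcup\T(t_1,t_0)\times\cdots\times\T(t_p,t_{p-1})$ is a disjoint union of finite products of hom $2$-categories, and the classifying space construction preserves finite products up to natural homotopy equivalence (geometric realization of double nerves preserves products of CW-complexes), one obtains
\begin{equation*}
\class\ner_p\T \;\simeq\; \bigsqcup_{(t_0,\ldots,t_p)}\class\T(t_1,t_0)\times\cdots\times\class\T(t_p,t_{p-1}) \;=\; \ner_p\mathbb{C}_\T.
\end{equation*}
These equivalences are compatible with faces and degeneracies, because the simplicial operators of $\ner\T$ are built from composition $\otimes$ and units $1_t$ in $\T$, and applying $\class$ converts these into precisely the composition and unit maps of $\mathbb{C}_\T$ by the very definition of that topological category. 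Hence there is a map of simplicial spaces $\class\ner\T\to\ner\mathbb{C}_\T$ that is a pointwise homotopy equivalence, and consequently $|\class\ner\T|\simeq|\ner\mathbb{C}_\T|$.

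For the second equivalence $\class\T\simeq|\diag\ner\ner\ner\T|$, I would iterate the same idea one categorical dimension down. For each fixed $p$, $\ner_p\T$ is a $2$-category, so its nerve $\ner\ner_p\T\colon\Delta^{\mathrm{op}}\to\cat$ is a genuine simplicial category, and Fact \ref{iv} applied in the category-valued case (the classical Segal--Thomason version) gives $\class\ner_p\T\simeq|\class\ner\ner_p\T|=|\ner\ner\ner_p\T|$, which by the Eilenberg--Zilber theorem is homotopy equivalent to $|\diag\ner\ner\ner_p\T|$. Assembling naturally over $[p]$ produces a simplicial space whose realization, by the standard commutation of total realization and diagonal realization for trisimplicial sets, is homotopy equivalent to $|\diag\ner\ner\ner\T|$; concatenating,
\begin{equation*}
\class\T\;\simeq\;|\class\ner\T|\;\simeq\;\bigl|[p]\mapsto|\diag\ner\ner\ner_p\T|\bigr|\;\simeq\;|\diag\ner\ner\ner\T|.
\end{equation*}

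The principal obstacle is bookkeeping: one must verify that the levelwise equivalences coming from preservation of products and from diagonals of bisimplicial sets assemble into maps of (bi- or tri-)simplicial objects, and invoke the classical commutation of geometric realization for multisimplicial sets (any order of partial realization, and the diagonal realization, all agree up to natural homotopy equivalence). Both ingredients are standard consequences of the Eilenberg--Zilber machinery already underlying Fact \ref{iv}, so once the initial strictness reduction making $\ner\T$ a genuine simplicial bicategory is observed, the theorem follows by a routine compilation.
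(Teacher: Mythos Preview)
Your proposal is correct and follows essentially the same route as the paper. The one simplification you miss is that the paper observes the \emph{equality} $\ner\mathbb{C}_\T=\class\ner\T$ of simplicial spaces outright: since $\ner_p\T$ is a disjoint union of finite products of hom $2$-categories and the classifying-space functor $\class:\text{2-}\cat\to\Top$ (built from geometric realization in compactly generated spaces) commutes with disjoint unions and finite products on the nose, one has $\class\ner_p\T=\bigsqcup\prod\class\T(t_i,t_{i-1})=\ner_p\mathbb{C}_\T$, with the simplicial operators matching automatically. This dispenses with your ``levelwise homotopy equivalence plus compatibility'' bookkeeping. For the second equivalence the paper proceeds exactly as you do, iterating Fact~\ref{iv} and then invoking Quillen's Lemma (what you call Eilenberg--Zilber machinery) for the identification of iterated realization with the diagonal.
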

\begin{proof}
Note that, for any 3-category  $\T$, the equality of simplicial
spaces  $\ner\mathbb{C}_\T=\class\ner\T$ holds, where the latter is
the simplicial space obtained by composing its (actually simplicial
2-category) nerve (\ref{bsnt}), $\ner\T:\Delta^{\!{\mathrm{op}}}\to
\text{2-}\cat$, with the classifying functor
$\class:\text{2-}\cat\to \Top$. Then, by Fact \ref{iv}, there is a
natural homotopy equivalence $\class\T\simeq |\ner\mathbb{C}_\T|$.

Furthermore, an iterated use of the natural homotopy equivalences
$\class\T\simeq |\class\ner\T|$ (which, of course, also work
both for 2-categories and categories) give the following chain of
homotopy equivalences, for any 3-category $\T$,
\begin{equation}\label{neqt}
\begin{array}{lll}
           \class \T&\simeq&|[p]\mapsto \class\ner_{p}\T|
           \ \simeq \ |[p]\mapsto |[q]\mapsto \class\ner_{q}\ner_{p}\T|| \\ [4pt]
           &\simeq&  |[p]\mapsto |[q]\mapsto |[r]\mapsto \ner_{\!r}\ner_{q}\ner_{p}\T|||
          \  \cong\ |\diag\ner\ner\ner\T|,
\end{array}
\end{equation}
where the last homeomorphism above is a consequence of Quillen's
Lemma \cite[page 86]{quillen}.\end{proof}

\subsection{Example: Classifying spaces of  bicategories}
When a bicategory $\b$ is viewed as a tricategory whose 3-cells are
all identities, then its classifying space as a tricategory,
according to Definition \ref{cst}, coincides with the classifying
space of the bicategory, $\class\b$, as  defined in
\cite[Definition 3.1]{ccg}.

\subsection{The Segal nerve of a tricategory}\label{subsegalner} Several theoretical interests suggest dealing with the
so-called Segal nerve construction for tricategories. This
associates to any tricategory $\T$ a simplicial bicategory, denoted
by $\bsner\T$, which can be thought of as a `rectification' of the
pseudo-simplicial Grothendieck nerve of the tricategory $\ner\T$
$(\ref{bsnt})$, since both are biequivalent in the tricategory of
pseudo-simplicial bicategories and therefore model the same homotopy type.  Furthermore, $\bsner\T$ is a weak 3-category under the
point of view of Tamsamani \cite{tam} and Simpson \cite{sim}, in the
sense that it is a {\em special simplicial bicategory}, that is, a
simplicial bicategory $S:\Delta^{\!^{\mathrm{op}}} \to \Hom $
satisfying the following two conditions:

(i)  $\bsner_0$ is discrete (i.e., all its 1- and 2-cells are
identities).

(ii) for $n\geq 2$, the Segal projection homomorphisms (see
\cite[Definition 1.2]{segal74})
\begin{equation}\label{1.2.29}
p_n=\prod_{k=1}^n d_0\cdots d_{k-2} d_{k+1}\cdots d_n \!\!:\bsner_n\longrightarrow
\bsner_{1}{_{d_0}\times_{d_1}}\bsner_{1}{_{d_0}\times_{d_1}}\overset{(n}\cdots {_{d_0}\times_{d_1}}
\bsner_{1}
\end{equation}
are biequivalences of bicategories.

For a reduced special simplicial bicategories $S$ as above, that is,
with $S_0=1$, the one-object discrete bicategory, the simplicial
space $\class S:\Delta^{\!^{\mathrm{op}}} \to \Top$, obtained by
replacing each bicategory by its classifying space,  satisfies 
hypothesis $(i)$ and $(ii)$ of Segal's Proposition 1.5 in
\cite{segal74} (see also \cite{m-s}): $\class S_0$ is contractible,
and the maps $\class p_n:\class S_n\to \class(S_1^n)=(\class S_1)^n$
are homotopy equivalences. Then, $\class S_1$ becomes an $H$-space
with multiplication induced by the composite homomorphism $S_1\times
S_1\overset{p_2^\bullet}\simeq S_2\overset{d_1}\to S_1$, and we have
the following useful result:

\begin{lemma}\label{lms}
If $S$ is any reduced special simplicial bicategory, then  the loop
space $\Omega|\class S|$ is a group completion of $\class S_1$.
Then,  if $\pi_0S_1$ is a group, there is a homotopy equivalence
$\class S_1\simeq \Omega|\class S|$. \end{lemma}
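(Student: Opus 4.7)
The plan is to reduce the statement to Segal's Proposition 1.5 of \cite{segal74}, applied to the simplicial space obtained by post-composing $S$ with the classifying space functor $\class\colon \Hom \to \Top$. That is, I would consider $\class S\colon \Delta^{\!^{\mathrm{op}}} \to \Top$, $[n]\mapsto \class S_n$, and check that this simplicial space fits the hypotheses of Segal's proposition: (a) $\class S_0$ is contractible, and (b) the maps $\class p_n\colon \class S_n\to (\class S_1)^n$ induced by the Segal projections are homotopy equivalences for all $n\geq 2$.

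To verify (a), note that $S_0=1$ is the terminal (one object, discrete) bicategory because $S$ is reduced, so $\class S_0$ is a single point. For (b), since $S$ is special, each Segal projection $p_n\colon S_n\to S_1{_{d_0}}\!\!\times_{d_1}\!\cdots\times_{d_1}\! S_1$ is a biequivalence of bicategories, so by Fact \ref{iii} the induced map $\class p_n$ is a homotopy equivalence onto the classifying space of the iterated strict pullback. Composing with the natural homotopy equivalence from $\class$ of an iterated product of bicategories to the corresponding product of classifying spaces (which uses that the Grothendieck nerve of a product of bicategories is naturally the product of their nerves, together with the compatibility of geometric realization with finite products of CW complexes), we obtain the required homotopy equivalence $\class S_n \simeq (\class S_1)^n$.

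With these two properties established, Segal's Proposition 1.5 gives directly that $\class S_1$ becomes a homotopy-associative $H$-space, with multiplication induced by $d_1\colon S_2\to S_1$ composed with a homotopy inverse of $p_2$, and that the canonical map $\class S_1 \to \Omega|\class S|$ is a group completion. For the second assertion, if $\pi_0 S_1$ is already a group, then $\pi_0 \class S_1 = \pi_0 S_1$ is a group, so the $H$-space $\class S_1$ has invertible $\pi_0$; in this situation the group completion map is a homology equivalence between $H$-spaces whose fundamental group acts trivially on homology of the universal cover (as is standard; see e.g.\ the discussion in \cite{segal74}), hence a weak equivalence, and thus a genuine homotopy equivalence between CW complexes.

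The main obstacle I anticipate is being careful with the step that converts biequivalences of bicategories into homotopy equivalences of the appropriate product spaces, because the iterated strict pullback $S_1{_{d_0}}\!\!\times_{d_1}\!\cdots$ need not coincide strictly with the bicategorical product $S_1\times\cdots\times S_1$; however, since $S_0$ is a point (reducedness) the two agree on the nose, which eliminates the difficulty. The remainder of the argument is a direct invocation of Segal's group completion machinery.
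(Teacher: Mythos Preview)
Your proposal is correct and follows essentially the same approach as the paper: the paper's argument (given in the paragraph immediately preceding the lemma) is precisely to observe that $\class S$ satisfies the hypotheses of Segal's Proposition~1.5 in \cite{segal74}, namely that $\class S_0$ is contractible and the Segal maps $\class p_n:\class S_n\to(\class S_1)^n$ are homotopy equivalences, and then invoke that proposition together with the group-completion theorem of \cite{m-s}. Your treatment is in fact slightly more detailed than the paper's, since you explicitly flag the use of Fact~\ref{iii} and correctly note that reducedness makes the iterated pullback coincide with the product, a point the paper passes over in silence.
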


Let us recall that, for a given tricategory $\T$, the construction
given in (\ref{buhrep}) of the bicategory of unitary homomorphic
representations of any small category $I$ in the tricategory $\T$,
$I\mapsto \buhrep(I,\T)$, is clearly functorial on the small category
$I$, and it leads to the definition below.

\begin{definition} \label{dbsner} The  {\em Segal nerve} of a tricategory
$\T$ is  the simplicial bicategory
\begin{equation}\label{bsn}\bsner\T:\Delta^{\!^{\mathrm{op}}}
\to  \Hom\subset \bicat,\hspace{0.4cm}[p]\mapsto \bsner_{p}\T=\buhrep([p],\T).
\end{equation}
\end{definition}

\vspace{0.2cm} We should remark that, when $\T=\b$ is a bicategory,
that is, when its 3-cells are all identities, then the Segal nerve
$\bsner\b$ is introduced in \cite[Definition 5.2]{ccg}, although  it
was first studied by Lack and Paoli in \cite{lack-paoli} under the
name  of `2-nerve of $\b$'. However,  this  may be a confusing
terminology  for $\bsner\b$ since, for example, the so-called
`geometric nerve' of a 2-category $\b$ \cite{b-c,ccg} is also called
the `2-nerve of $\b$' in
 \cite{tonks}.

\begin{theorem}\label{bsners} Let $\T$ be a tricategory. Then, the
following statements hold:

$(i)$ There is a normal pseudo-simplicial homomorphism
\begin{equation}\label{cm2}L:\ner\T\to\bsner\T,\end{equation}
such that, for any $p\geq 0$, the homomorphism $L_p:\ner_p\T\to
\bsner_p\T$ is a biequivalence of bicategories.

$(ii)$ The simplicial bicategory $\bsner\T$ is special.
\end{theorem}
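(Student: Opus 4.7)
The plan is to exploit the biadjoint biequivalence $L \dashv R : \brep(\mathcal{G}_p,\T) \rightleftarrows \buhrep([p],\T)$ established in Lemma \ref{rufc}, since on the one hand $\ner_p\T = \brep(\mathcal{G}_p,\T)$ (where $\mathcal{G}_p$ is the graph (\ref{graphp})) and on the other hand $\bsner_p\T = \buhrep([p],\T)$.

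For part $(i)$, I would take each $L_p : \ner_p\T \to \bsner_p\T$ to be the homomorphism $L$ of Lemma \ref{rufc} applied to $\mathcal{G}_p$; its image lies in $\buhrep([p],\T)$ by part $(b)$, and biequivalence at each level follows at once from the biadjoint biequivalence $(\ref{lrg2})$. To upgrade these into a pseudo-simplicial homomorphism, recall from $(\ref{astar})$ that $\ner_a\T = R_q a^* L_p$ for any $a : [q]\to[p]$, while $\bsner_a\T = a^*$ is just the restriction on $\buhrep$. The structure pseudo-equivalence $\theta_a : L_q \ner_a\T \Rightarrow \bsner_a\T\, L_p$ is then supplied by whiskering the counit $\v_q : L_qR_q \Rightarrow 1$, that is, $\theta_a = \v_q a^* L_p$, which is a genuine pseudo-equivalence since $a^*L_p$ lands in $\bhrep([q],\T)$. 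The coherence modifications making $(\ner\T, L)$ into a pseudo-simplicial homomorphism then arise canonically from the modifications $\omega'_b$ of $(\ref{2vec})$ and from the identities $\v_p L_p = 1$, $R_p\v_p = 1$, $R_pL_p = 1$ of Lemma \ref{rufc}$(a)$; verification of the coherence axioms (CC3) and (CC4) of \cite{c-c-g} is then a direct application of Fact \ref{fpre1}, exactly parallel to the proof of Theorem \ref{pstc}.

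For part $(ii)$, I need to check discreteness of $\bsner_0\T$ and that each Segal projection $(\ref{1.2.29})$ is a biequivalence. A unitary homomorphic representation $F:[0]\to\T$ is determined by the object $F0 \in \mathrm{Ob}\T$, since $F1_0 = 1_{F0}$ and all the structure cells are forced by normality; a $1$-cell $\Phi : F \Rightarrow G$ requires $F0 = G0$ and has $\Phi 1_0 = 1_{1_{F0}}$ with all 3-cells canonical, and similarly $2$-cells are forced to be identities. Hence $\bsner_0\T$ is the discrete bicategory on $\mathrm{Ob}\T$. For the Segal projection, observe that because $\bsner_0\T$ is discrete the $n$-fold fibre product reduces to the disjoint union
\[
\bsner_1\T \,{_{d_0}\!\times_{d_1}}\cdots{_{d_0}\!\times_{d_1}}\, \bsner_1\T \;=\; \bigsqcup_{(t_0,\ldots,t_n)} \bsner_1\T(t_1,t_0) \times \cdots \times \bsner_1\T(t_n,t_{n-1}),
\]
and a direct inspection of unitary homomorphic representations of $[1]$ shows that $\bsner_1\T(t',t) \cong \T(t',t)$ as bicategories. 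Thus this fibre product is naturally isomorphic to $\ner_n\T = \brep(\mathcal{G}_n,\T)$ as in (\ref{bpsnt}), and under this identification the Segal projection $p_n$ corresponds precisely to the restriction $2$-functor $R_n : \buhrep([n],\T) \to \brep(\mathcal{G}_n,\T)$ (restriction to the basic edges $(k-1,k)$ of $\mathcal{G}_n$). The biequivalence of $p_n$ is then immediate from the biadjoint biequivalence $(\ref{lrg2})$.

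The main obstacle I anticipate is the coherence bookkeeping in part $(i)$: making sure that the canonical modifications built from $\omega'_b$, the triangle identities and the unit isomorphisms really satisfy (CC3)–(CC4) for triples $[n]\to[q]\to[p]$ of composable maps, and interact correctly with the pseudo-simplicial structure $(\chi,\omega)$ of $\ner\T$ constructed in the proof of Theorem \ref{pstc}. However, since every ingredient is built from the single pseudo-equivalence $\v$ (and its adjoint quasi-inverse $\v^\bullet$), and the verifications reduce, via Fact \ref{fpre1}, to manipulations of pasting diagrams in each hom-bicategory of $\T$, this is routine bookkeeping rather than a genuinely new difficulty.
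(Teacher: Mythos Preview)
Your proposal is correct and follows essentially the same approach as the paper: for $(i)$ you take $L_p$ from Lemma~\ref{rufc}, define $\theta_a=\v_q a^*L_p$, and obtain $\Pi_{a,b}$ from $\omega'_b$, with coherence checked via Fact~\ref{fpre1}; for $(ii)$ you identify $\bsner_0\T$ with $\mathrm{Ob}\T$, the Segal fibre product with $\ner_n\T$, and the Segal projection $p_n$ with $R_n$, concluding by the biadjoint biequivalence~(\ref{lrg2}). The only superfluous remark is your mention of $\v^\bullet$ in the final paragraph: the adjoint quasi-inverse is needed for $\ner H$ in Theorem~\ref{psth}, but not here, since $\theta_a$ and $\Pi_{a,b}$ are built from $\v$ alone.
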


\begin{proof} Let us recall the explicit construction of $\ner\T=(\ner\T,\chi,\omega)$ given in the
proof of Theorem \ref{pstc}, particularly the constructions of the
bicategories $\ner_p\T$ in $(\ref{bpsnt})$, of the homomorphisms
$\ner_{\!a}\T$ in $(\ref{astar})$, of the pseudo-equivalences
$\chi_{a,b}$ in $(\ref{chi})$, and of the modifications
$\omega_{a,b,c}$ in $(\ref{omega})$. Furthermore, recall from Lemma
\ref{rufc} $(c)$ that every biadjoint biequivalence $(\ref{plrg})$
restricts by giving a biadjoint biequivalence
\begin{equation}\label{plrg2}L_p\dashv R_p: \ner_p\T \rightleftarrows \bsner_p\T.\end{equation}

The normal pseudo-simplicial homomorphism $(\ref{cm2})$,
$L=(L,\theta,\Pi):\ner\T\to\bsner\T$, is then defined by the
homomorphisms $L_p: \ner_p\T\to\bsner_p\T$, $p\geq 0$.  For any map
$a:[q]\to [p]$ in the simplicial category, the structure
pseudo-equivalence
$$
\xymatrix@R=18pt{\ner_p\T\ar[r]^{L_p}\ar[d]_{\ner_a\!\T}
\ar@{}[rd]|{ \theta_a\,\Rightarrow}&\bsner_p\T\ar[d]^{a^*}\\
\ner_q\T\ar[r]_{L_q}&\bsner_q\T,}
$$
is provided by the counit pseudo-equivalence $\v_q:L_qR_q\Rightarrow
1_{\bsner_q\T}$ ; that is, \begin{equation}\xymatrix@C=40pt{L_q\
\ner_{\!a}\T=L_qR_qa^*L_p\ar@{=>}[r]^-{\theta_a=\v_qa^*L_p}&a^*\,
L_p.}\end{equation} For $[n]\overset{b}\to [q]\overset{a}\to [p]$,
any two composable arrows  of $\Delta$, the  structure invertible
modification
$$\xymatrix@R=8pt@C=20pt{L_n\,\ner_{\!b}T\,\ner_{\!a}\T\ar@2{->}[dd]_{ \theta \ner_{\!a}\!\T}
\ar@{=>}[rr]^{ L_n\chi} & &L_n\,\ner_{\!ab}\T\ar@{=>}[dd]^{\theta}
\\ \ar@{}[rr]|(0.5){ { \Pi_{a,b}}\,\Rrightarrow} && \\
b^*L_q\,\ner_{\!a}\T\ar@{=>}[r]_{ b^*\theta}&b^*a^*L_p\ar@{=}[r]_{ 1}&
(ab)^*L_p }
$$
is directly provided by the canonical modification $(\ref{2vec})$,
$\Pi_{a,b}=\omega'_ba^*L_p$.

The coherence conditions for $L$ (i.e., conditions $(\mathbf{CC3})$
and $(\mathbf{CC4})$ in \cite{c-c-g}, with the modifications
$\Gamma$ the  coherence isomorphisms $1\circ 1\cong 1$), are easily
verified by using Fact \ref{fpre1}. This complete the proof of part
$(i)$.

And when it comes to part $(ii)$,  that is, that $\bsner\T$ is a
special simplicial bicategory, we have the following (quite obvious)
identifications between bicategories: \begin{equation}\label{i0}
\bsner_0\T= \buhrep([0],\T)=\mathrm{Ob}\T=\ner_0\T,\end{equation}
\begin{equation}\label{i1}\xymatrix{\bsner_1\T=
\buhrep([1],\T)=\bigsqcup\limits_{(t_0,t_1)}\hspace{-0.15cm}
\mathcal{T}(t_1,t_0)=\ner_1\T,}\end{equation}  and, for any integer $p\geq 2$,
$$
\begin{array}{ll}\bsner_{1}\!\T{_{d_0}\times_{d_1}}
\overset{(p}\cdots {_{d_0}\times_{d_1}}
\bsner_{1}\!\T &\hspace{-0.2cm}= \bigsqcup\limits_{(t_0,t_1)}\hspace{-0.15cm}
\mathcal{T}(t_1,t_0)\times
\bigsqcup\limits_{(t_1,t_2)}\hspace{-0.15cm}
\mathcal{T}(t_2,t_1)\times\cdots\times \bigsqcup\limits_{(t_{p-1},t_p)}\hspace{-0.25cm}
\mathcal{T}(t_{p},t_{p-1})\\[14pt] &\hspace{-0.2cm}=
\bigsqcup\limits_{(t_0,\ldots,t_p)}\hspace{-0.3cm}
\mathcal{T}(t_1,t_0)\times\mathcal{T}(t_2,t_1)\times\cdots\times\mathcal{T}(t_p,t_{p-1})
=\ner_p\T.\end{array}
$$
Through these identifications we see that, for any integer $p\geq
2$, the Segal projection homomorphism (\ref{1.2.29}) is precisely
the biequivalence $R_p:\bsner_p\T\to\ner_p\T$ in $(\ref{plrg2})$
which, recall, is defined by restricting it to the basic graph
$(\ref{graphp})$ of the category $[p]$. Whence the simplicial
bicategory $\bsner\T$ is special.
\end{proof}

The following theorem states that the classifying space of a
tricategory $\T$ can be realized, up to homotopy equivalence, by its
Segal nerve $\bsner\T$. This fact will be relevant for our later
discussions on loop spaces. Let
$$\class\bsner\T:\Delta^{\!{\mathrm{op}}}\to\Top$$
be the simplicial space obtained by composing
$\bsner\T:\Delta^{\!^{\mathrm{op}}}\to \Hom\subset\bicat$ with the
classifying functor $\class:\Hom\to\Top$ (recall Fact \ref{i}).

\begin{theorem}\label{segr} For any tricategory $\T$, there is a
 homotopy equivalence $\class\T\simeq |\class\bsner\T|$.
\end{theorem}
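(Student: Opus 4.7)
The proof should follow essentially by assembling the facts already assembled in the paper, with no real combinatorial work left to do. The plan is to compare $\class\T$ and $|\class\bsner\T|$ through the intermediate object $\class\!\int_{\!\Delta}\!\bsner\T$, using the pseudo-simplicial homomorphism $L:\ner\T\to\bsner\T$ provided by Theorem \ref{bsners}$(i)$ on the one hand, and the standard geometric realization comparison of Fact \ref{iv} on the other.

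First I would observe that, by Definition \ref{cst}, $\class\T = \class\!\int_{\!\Delta}\!\ner\T$, so the left-hand side is already in a form amenable to the Grothendieck construction. By Theorem \ref{bsners}$(i)$, the normal pseudo-simplicial homomorphism $L:\ner\T\to\bsner\T$ has the property that, at each simplicial degree $p\geq 0$, the component $L_p:\ner_p\T\to\bsner_p\T$ is a biequivalence of bicategories. Appealing to Fact \ref{iii}, each induced map $\class L_p:\class\ner_p\T\to\class\bsner_p\T$ is therefore a homotopy equivalence. Now Fact \ref{fvi}, applied to the pseudo-simplicial homomorphism $L$, yields that the induced map
\[
\class\!\int_{\!\Delta}\!L:\ \class\!\int_{\!\Delta}\!\ner\T\ \longrightarrow\ \class\!\int_{\!\Delta}\!\bsner\T
\]
is itself a homotopy equivalence.

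Next, since $\bsner\T$ takes values in $\mathbf{Hom}\subset\bicat$ (all face and degeneracy morphisms in $\bsner\T$ are genuine homomorphisms, not merely pseudo-simplicial), Fact \ref{iv} provides a natural homotopy equivalence
\[
\class\!\int_{\!\Delta}\!\bsner\T\ \simeq\ |\class\bsner\T|,
\]
where the right-hand side is the geometric realization of the simplicial space obtained by applying the classifying space functor degreewise to $\bsner\T$. Composing the two homotopy equivalences yields $\class\T \simeq |\class\bsner\T|$, as required.

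There is no real obstacle here, since every ingredient has been prepared: the key technical points (the biequivalences $L_p$ being established at the level of each bicategory of $p$-simplices in Theorem \ref{bsners}$(i)$, and the strictness of $\bsner\T$ as a genuinely simplicial object in $\mathbf{Hom}$ as built into Definition \ref{dbsner}) are precisely what allow Facts \ref{iii}, \ref{fvi}, and \ref{iv} to apply without modification. If anything were to require care, it would be verifying that $\bsner\T$ really lands in $\mathbf{Hom}$ rather than merely in $\bicat$, i.e., that $a^*:\buhrep([p],\T)\to\buhrep([q],\T)$ is a strict homomorphism for every $a:[q]\to[p]$; but this is immediate from the functoriality noted before Definition \ref{dbsner}.
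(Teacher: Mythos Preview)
Your proof is correct and follows essentially the same route as the paper's own argument: use the levelwise biequivalences $L_p$ from Theorem \ref{bsners}$(i)$ together with Facts \ref{iii} and \ref{fvi} to obtain $\class\T=\class\!\int_{\!\Delta}\!\ner\T\simeq\class\!\int_{\!\Delta}\!\bsner\T$, and then invoke Fact \ref{iv} for the simplicial bicategory $\bsner\T:\Delta^{\!\mathrm{op}}\to\mathbf{Hom}$ to conclude $\class\!\int_{\!\Delta}\!\bsner\T\simeq|\class\bsner\T|$. There is nothing to add or correct.
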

\begin{proof} Let us consider the pseudo-simplicial homomorphism $(\ref{cm2})$,
$L:\ner\T\to\bsner\T$. Since, for every integer $p\geq 0$, the
homomorphism $L_p:\ner_{p}\T\to\bsner_{p}\T$ is a biequivalence, it
follows from Fact \ref{iii} that the induced cellular map $\class
L_p:\class \ner_{p}\T\to\class\bsner_{p}\T$ is a homotopy
equivalence. Then, by Fact \ref{fvi}, the induced map
$\class\int_{\!\Delta}\!L:\class\!\int_{\!\Delta}\!\ner\T\to\class\!\int_{\!\Delta}\!\bsner\T$
is a homotopy equivalence. Since, by definition,
$\class\T=\class\!\int_{\!\Delta}\!\ner\T$, whereas, by Fact
\ref{iv}, there is a homotopy equivalence
$\class\!\int_{\!\Delta}\!\bsner\T\simeq |\class\bsner\T|$, the
claimed homotopy equivalence follows.
\end{proof}

\subsection{Example: Classifying spaces of monoidal
bicategories}\label{emb} Any monoidal bicategory
$(\b,\otimes)=(\b,\otimes,\text{I},\boldsymbol{a},\boldsymbol{l},\boldsymbol{r},\pi,\mu,\lambda,\rho)$
 can be viewed as a
tricategory
\begin{equation}\label{ome}\Sigma(\b,\otimes)\end{equation}
 with only one object, say $*$,  whose hom-bicategory is the underlying bicategory. Thus,
 $\Sigma(\b,\otimes)(*,*)=\b$, and
it is the  composition  given by the tensor functor
${\otimes:\b\times\b\rightarrow\b}$ and the identity at the object
is $1_*=\text{I}$, the unit object of the monoidal bicategory. The
structure pseudo-equivalences and modifications $\boldsymbol{a}$,
$\boldsymbol{l}$, $\boldsymbol{r}$, $\pi$, $\mu$, $\lambda$, and
$\rho$ for $\Sigma(\b,\otimes)$ are just those of the monoidal
bicategory, respectively. Call this tricategory the {\em suspension},
or {\em delooping, tricategory of the bicategory $\b$ induced by the
 monoidal structure given on it}, and call its corresponding
Grothendieck nerve (\ref{bsnt}) the {\em nerve of the monoidal
bicategory}, hereafter denoted by $\ner(\b,\otimes)$. Thus,
\begin{equation}\label{rbc}\ner(\b,\otimes)\!=\!\ner\Sigma(\b,\otimes): \Delta^{\mathrm{op}} \to\bicat,
\hspace{0.4cm} [p]\mapsto \b^p,\end{equation} is a normal
pseudo-simplicial bicategory, whose bicategory of $p$-simplices is
the $p$-fold power of the underlying bicategory $\b$, with face and
degeneracy homomorphisms induced by the  tensor
 homomorphism $\otimes:\b\times\b\to \b$ and unit object I, following  the familiar formulas (\ref{fade}),
 in analogy with those of the reduced bar construction on a topological monoid, and with structure
 pseudo-equivalences and modifications canonically arising from the data of the monoidal structure on $\b$.
The general Definition \ref{cst} for classifying spaces of
tricategories leads to the following:

\begin{definition}\label{dcsmb}
The {\em classifying space} of the monoidal bicategory, denoted by
$\class(\b,\otimes)$, is defined to be the classifying space of its
delooping tricategory $\Sigma(\b,\otimes)$. Thus,
\begin{equation}\xymatrix{\class(\b,\otimes)=\class\Sigma(\b,\otimes)=\class
\!\int_{\!\Delta}\!\!\ner(\b,\otimes)}.\end{equation}
\end{definition}

 If $(C,\otimes)=(C,\otimes,\text{I},\boldsymbol{a}, \boldsymbol{l},\boldsymbol{r})$ is a monoidal category,
  and we regard $C$ as a bicategory all of whose 2-cells are identities, then the suspension  tricategory
$\Sigma(C,\otimes)$ is actually a bicategory,
  called in \cite[2.10]{KV94b} the {\em delooping bicategory of the category
  induced by its monoidal structure}.
  The  nerve of $\Sigma(C,\otimes)$ then becomes the pseudo-simplicial category
 \begin{equation}\label{psnm}\ner(C,\otimes):\Delta^{\mathrm{op}} \to\cat,
 \hspace{0.4cm} [p]\mapsto C^p,\end{equation}
used by  Jardine in \cite[\S 3]{jardine} to define the classifying
space of the monoidal category just as above:
$\class(C,\otimes)=\!\int_{\!\Delta}\!\!\ner(C,\otimes)$  (see also
\cite{b-c2}, \cite{b-f-s-v}, or \cite[Appendix]{hinich}, for more
references). Thus,
\begin{equation}\label{csmc}\class(C,\otimes)=\class\Sigma(C,\otimes),\end{equation} as above for
arbitrary monoidal bicategories.

It is a well-known fact by Stasheff \cite{sta}  that the classifying
space of a monoidal category $(C,\otimes)$ is, up to group
completion, a loop space. More precisely, it is a fact that
\begin{quote} {\em `` the loop space $\Omega\class(C,\otimes)$ is a
group completion of $\class C$" }\end{quote} (see [23, Propositions
3.5 and 3.8] or [7, Corollary 4]). Next theorem extends Stasheff's
result to bicategories, by showing that the group completion of the
classifying space of a bicategory enriched with a monoidal structure
is homotopy equivalent to a loop space.

\begin{theorem}\label{ltmb} For any monoidal bicategory $(\b,\otimes)$, the loop space of its classifying
space, $\Omega\class(\b,\otimes)$, is a group completion of the
classifiying space of the underlying bicategory, $\class\b$. In
particular, if the monoid  of connected components $\pi_0\b$ is a
group, then there is a homotopy equivalence $\class \b\simeq
 \Omega\class(\b,\otimes)$.
\end{theorem}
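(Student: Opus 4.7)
The plan is to reduce everything to the Segal nerve machinery developed in Section 4. By Definition \ref{dcsmb} combined with Theorem \ref{segr} applied to $\T = \Sigma(\b,\otimes)$, there is a homotopy equivalence
\[
\class(\b,\otimes) \;=\; \class\Sigma(\b,\otimes) \;\simeq\; |\class\bsner\Sigma(\b,\otimes)|,
\]
and, since $\Omega$ preserves homotopy equivalences of pointed CW-complexes, also $\Omega\class(\b,\otimes) \simeq \Omega|\class\bsner\Sigma(\b,\otimes)|$. So the problem is to analyze the right-hand side as a Segal-style loop space.

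Next, I would exploit the fact that $\Sigma(\b,\otimes)$ has a single object $*$. The identifications (\ref{i0}) and (\ref{i1}) then specialize to $\bsner_0\Sigma(\b,\otimes) = \mathrm{Ob}\,\Sigma(\b,\otimes) = \{*\}$, the terminal discrete bicategory, and $\bsner_1\Sigma(\b,\otimes) = \Sigma(\b,\otimes)(*,*) = \b$. Thus $\bsner\Sigma(\b,\otimes)$ is a reduced simplicial bicategory, and by Theorem \ref{bsners}$(ii)$ it is special. Composing with the classifying functor $\class\colon\Hom\to\Top$ of Fact \ref{i} therefore yields a reduced special simplicial space $\class\bsner\Sigma(\b,\otimes)\colon\Delta^{\mathrm{op}}\to\Top$ whose space of $1$-simplices is precisely $\class\b$.

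At this point Lemma \ref{lms} applies directly to $S = \bsner\Sigma(\b,\otimes)$ and gives that $\Omega|\class\bsner\Sigma(\b,\otimes)|$ is a group completion of $\class\bsner_1\Sigma(\b,\otimes) = \class\b$, which in view of the equivalence of the first paragraph proves the first assertion. For the second, if $\pi_0\b$ is a group then the same lemma yields a homotopy equivalence $\class\b \simeq \Omega|\class\bsner\Sigma(\b,\otimes)| \simeq \Omega\class(\b,\otimes)$.

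The step I expect to require the most care is verifying that the $H$-space structure that Segal's machinery induces on $\class\b = \class\bsner_1\Sigma(\b,\otimes)$ agrees with the one coming from the tensor product $\otimes$ of $(\b,\otimes)$, so that the hypothesis ``$\pi_0\b$ is a group'' is really the hypothesis required by Lemma \ref{lms}. This is a matter of unwinding definitions: the multiplication is given by $\bsner_1\times\bsner_1 \xleftarrow{\ p_2^\bullet\ } \bsner_2 \xrightarrow{\ d_1\ }\bsner_1$, and under the identifications above the face $d_1$ is induced by composition in $\Sigma(\b,\otimes)$, which by construction of the suspension tricategory is exactly $\otimes\colon\b\times\b\to\b$. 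Once this bookkeeping is done, the theorem follows immediately from \ref{segr}, \ref{bsners}, and \ref{lms}.
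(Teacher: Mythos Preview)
Your proof is correct and follows essentially the same route as the paper: reduce via Theorem~\ref{segr} to the Segal nerve $\bsner\Sigma(\b,\otimes)$, observe via (\ref{i0}), (\ref{i1}) and Theorem~\ref{bsners}(ii) that it is reduced and special with $\bsner_1=\b$, and then invoke Lemma~\ref{lms}. Your closing paragraph on matching the induced $H$-space structure with $\otimes$ is a welcome extra check that the paper leaves implicit.
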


\begin{proof} By Theorem \ref{segr}, $\class(\b,\otimes)$ is
homotopy equivalent to $|\class\bsner\Sigma(\b,\otimes)|$, the
geometric realization of the the simplicial space obtained by taking
classifying spaces on the simplicial bicategory
$\bsner\Sigma(\b,\otimes)$, Segal nerve of the suspension tricategory
of the monoidal bicategory. By Theorem \ref{bsners},
$\bsner\Sigma(\b,\otimes)$ is a special simplicial bicategory.
Furthermore, since the tricategory $\Sigma(\b,\otimes)$ has only one
object, $\bsner\Sigma(\b,\otimes)$ is reduced (see (\ref{i0})).
Hence, the result follows from Lemma \ref{lms}, since
$\bsner_1\Sigma(\b,\otimes)=\b$, by the identification (\ref{i1}).
\end{proof}

\subsection{Example: Classifying spaces of braided monoidal
categories}\label{ebmc} Let $(C,\otimes,\boldsymbol{c})$ be a
braided monoidal category as in \cite{joyal}. Thanks to the
braidings $\boldsymbol{c}:x\otimes y\to y\otimes x$,  the given
tensor product on $C$ defines in the natural way a tensor product
homomorphism on the suspension bicategory  of the underlying
monoidal category, $\otimes:\Sigma (C,\otimes)\times\Sigma
(C,\otimes)\to\Sigma(C,\otimes)$. Thus $(\Sigma(C,\otimes),\otimes)$
is a monoidal bicategory. The corresponding suspension tricategory,
\begin{equation}\label{omec}\Sigma^2(C,\otimes,\boldsymbol{c})=\Sigma(\Sigma(C,\otimes),\otimes)\end{equation}
is called the {\em double suspension}, or {\em double delooping, of the
underlying category $C$ associated to the given braided monoidal
structure on it} (see \cite[4,2.5]{berger}, \cite[4.2]{KV94b} or
\cite[7.9]{g-p-s}). This is a tricategory
 with only one object, say $*$,
only one arrow $*=1_*:*\to *$, the objects  of $C$ are the 2-cells,
and the morphisms of  $C$ are the 3-cells. The hom-bicategory is
$\Sigma^2(C,\otimes,\boldsymbol{c})(*,*)=\Sigma(C,\otimes)$, the
suspension bicategory  of the underlying monoidal category
$(C,\otimes)$, the composition is also (as the horizontal one in
$\Sigma(C,\otimes)$) given by the tensor functor ${\otimes:C\times
C\rightarrow C}$ and the interchange 3-cell between the two
different composites of 2-cells is given by the braiding. The most
striking instance is for $(C,\otimes,\boldsymbol{c})=(A,+,0)$, the
strict braided monoidal
 category with only one object defined by an abelian group $A$, where both composition and tensor
 product are given by the addition $+$ in $A$; in this case, the
 double suspension tricategory $\Sigma^2A$ is precisely the
 3-category treated in Examples \ref{eta} and \ref{eta2}.

For any braided monoidal category $(C,\otimes,\boldsymbol{c})$, the
Grothendieck nerve (\ref{bsnt}) of the double suspension tricategory
$\Sigma^2(C,\otimes,\boldsymbol{c})$ coincides with the
pseudo-simplicial bicategory called in \cite{c-c-g} the {\em nerve
of the braided monoidal category}, and denoted by
$\ner(C,\otimes,\boldsymbol{c})$. Thus,
\begin{equation}\label{psnbmc}\ner(C,\otimes,\boldsymbol{c})\!=\!\ner\Sigma^2(C,\otimes,\boldsymbol{c}):\Delta^{\!^{\mathrm{op}}}
\to\bicat, \hspace{0.4cm} [p]\mapsto
(\Sigma(C,\otimes))^p= \Sigma (C^p,\otimes),\end{equation}
is a normal pseudo-simplicial one-object bicategory whose bicategory
of $p$-simplicies is the suspension bicategory of the monoidal
category $p$-fold power of $(C,\otimes)$. Since the {\em classifying
space of the braided monoidal category} \cite[Definition
6.1]{c-c-g}, $\class(C,\otimes,\boldsymbol{c})$, is just given by
\begin{equation}\xymatrix{\class(C,\otimes,\boldsymbol{c})=\class
\!\int_{\!\Delta}\!\!\ner(C,\otimes,\boldsymbol{c}),}\end{equation}
we have the following:
\begin{theorem}\label{csmb}
The  classifying space of a braided monoidal category coincides with
the classifying space of its double suspension tricategory,
$\xymatrix{\class(C,\otimes,\boldsymbol{c})=\class
\Sigma^2(C,\otimes,\boldsymbol{c})}$.
\end{theorem}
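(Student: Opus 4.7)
The plan is to show that the theorem is essentially an unpacking of definitions, with the key content being the identification of pseudo-simplicial bicategories asserted in equation (\ref{psnbmc}): $\ner(C,\otimes,\boldsymbol{c})=\ner\Sigma^2(C,\otimes,\boldsymbol{c})$. Once this identification is established, both sides of the claimed equality are by definition the classifying space of the same pseudo-simplicial bicategory, namely $\class\!\int_{\!\Delta}\!\!\ner\Sigma^2(C,\otimes,\boldsymbol{c})$.

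First I would analyze the bicategories of $p$-simplices. The tricategory $\Sigma^2(C,\otimes,\boldsymbol{c})$ has a single object $*$, a single $1$-cell $1_*$, and hom-bicategory $\Sigma^2(C,\otimes,\boldsymbol{c})(*,*)=\Sigma(C,\otimes)$, with composition of $1$-cells given by the tensor product $\otimes:\Sigma(C,\otimes)\times\Sigma(C,\otimes)\to\Sigma(C,\otimes)$ provided by the braiding. Applying the general formula (\ref{bpsnt}), the bicategory of $p$-simplices of the Grothendieck nerve is
$$\ner_p\Sigma^2(C,\otimes,\boldsymbol{c})=\Sigma(C,\otimes)\times\overset{(p}\cdots\times\Sigma(C,\otimes)=\Sigma(C^p,\otimes),$$
exactly matching the description of $\ner_p(C,\otimes,\boldsymbol{c})$ recalled in (\ref{psnbmc}).

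Next I would verify that the simplicial operators and structure data agree. The face and degeneracy homomorphisms of $\ner\Sigma^2(C,\otimes,\boldsymbol{c})$ are, by (\ref{fade}), induced by the composition (= tensor product $\otimes$) and the identity at the unique object (= unit object $\mathrm{I}$). These are precisely the operators of $\ner(C,\otimes,\boldsymbol{c})$, which are modelled on the reduced bar construction for the monoidal product $\otimes$ of $C$. Similarly, the structure pseudo-equivalences $\chi_{a,b}$ in (\ref{chi}) and modifications $\omega_{a,b,c}$ in (\ref{omega}) arise, via Theorem \ref{pstc}, from the coherence data $\boldsymbol{a},\boldsymbol{l},\boldsymbol{r},\pi,\mu,\lambda,\rho$ of $\Sigma^2(C,\otimes,\boldsymbol{c})$; but these coherence data are, by construction of the double suspension, precisely those of the braided monoidal structure on $(C,\otimes,\boldsymbol{c})$ (with the braiding entering as the interchange $3$-cell between the two composites of $2$-cells). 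They therefore coincide with the structure constraints of $\ner(C,\otimes,\boldsymbol{c})$ defined in \cite{c-c-g}.

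With the identification $\ner(C,\otimes,\boldsymbol{c})=\ner\Sigma^2(C,\otimes,\boldsymbol{c})$ in place, the theorem follows by comparing definitions: by Definition \ref{cst} applied to the tricategory $\Sigma^2(C,\otimes,\boldsymbol{c})$,
$$\class\Sigma^2(C,\otimes,\boldsymbol{c})=\class\!\int_{\!\Delta}\!\!\ner\Sigma^2(C,\otimes,\boldsymbol{c})=\class\!\int_{\!\Delta}\!\!\ner(C,\otimes,\boldsymbol{c})=\class(C,\otimes,\boldsymbol{c}).$$
The main (and only substantive) obstacle is the bookkeeping in the second step: one must check that the canonical construction of structure constraints from the general tricategorical data of $\Sigma^2(C,\otimes,\boldsymbol{c})$ reproduces, on the nose, the bespoke constraints used to define $\ner(C,\otimes,\boldsymbol{c})$ in \cite{c-c-g}. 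This collapses to a routine, if tedious, verification because $\Sigma^2(C,\otimes,\boldsymbol{c})$ has only one $0$-cell and one $1$-cell, so the coherence data of the tricategory reduce to exactly the data of the braided monoidal structure on $C$.
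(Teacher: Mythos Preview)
Your proposal is correct and follows essentially the same approach as the paper: the theorem is stated immediately after the identification $\ner(C,\otimes,\boldsymbol{c})=\ner\Sigma^2(C,\otimes,\boldsymbol{c})$ in (\ref{psnbmc}) and the recalled definition $\class(C,\otimes,\boldsymbol{c})=\class\!\int_{\!\Delta}\!\ner(C,\otimes,\boldsymbol{c})$, and the paper offers no further proof, treating the result as an immediate consequence of these definitions together with Definition~\ref{cst}. Your write-up simply makes explicit the bookkeeping that the paper leaves implicit.
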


Braided monoidal categories have been playing a key role in recent
developments in  quantum theory and its related topics,
  mainly thanks to the following result:

\begin{quote} {\em ``The group completion of the classifying space of a braided monoidal category is a
double loop space"}
\end{quote}

\noindent as  was noticed by J. D. Stasheff in \cite{sta}, but
originally proven by Z. Fiedorowicz in \cite{fie} (other proofs can
be found in \cite{b-f-s-v,berger}). Below we show a new proof of
Stasheff-Fiedorowicz's result, in the more precise form stated in
\cite[Theorem 6.10]{c-c-g}:

\begin{theorem}\label{stbmc}
For any braided monoidal category $(C,\otimes,\boldsymbol{c})$ there
is a homotopy equivalence
$\class(C,\otimes)\simeq\Omega\class(C,\otimes,\boldsymbol{c}).$
\end{theorem}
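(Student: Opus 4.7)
The plan is to obtain this as an immediate consequence of the delooping Theorem \ref{ltmb} applied to the monoidal bicategory $(\Sigma(C,\otimes),\otimes)$ whose suspension tricategory is precisely $\Sigma^2(C,\otimes,\boldsymbol{c})$.

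First I would unpack the defining chain of equalities. By definition of the double suspension (\ref{omec}), $\Sigma^2(C,\otimes,\boldsymbol{c})=\Sigma(\Sigma(C,\otimes),\otimes)$, and so Theorem \ref{csmb} together with Definition \ref{dcsmb} gives
\[
\class(C,\otimes,\boldsymbol{c})=\class\Sigma^2(C,\otimes,\boldsymbol{c})=\class\Sigma(\Sigma(C,\otimes),\otimes)=\class(\Sigma(C,\otimes),\otimes).
\]
Similarly, via the identification (\ref{csmc}), the classifying space of the underlying monoidal category is $\class(C,\otimes)=\class\Sigma(C,\otimes)$; that is, it is the classifying space of the bicategory that underlies the monoidal bicategory $(\Sigma(C,\otimes),\otimes)$.

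Next I would invoke Theorem \ref{ltmb} with the monoidal bicategory $(\b,\otimes)=(\Sigma(C,\otimes),\otimes)$. The key observation is that the underlying bicategory $\b=\Sigma(C,\otimes)$ has only one object, so its set of connected components $\pi_0\b$ is a singleton, hence trivially a group. Theorem \ref{ltmb} therefore yields a genuine homotopy equivalence (not merely a group completion)
\[
\class\Sigma(C,\otimes)\simeq \Omega\class(\Sigma(C,\otimes),\otimes).
\]
Substituting the two identifications from the previous paragraph into the two sides of this equivalence gives precisely $\class(C,\otimes)\simeq \Omega\class(C,\otimes,\boldsymbol{c})$, which is the claimed statement.

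There is essentially no obstacle: all the hard work has already been carried out in Theorems \ref{segr}, \ref{bsners}, and \ref{ltmb} (which rest on the Segal nerve machinery and Segal's group-completion lemma \ref{lms}). The only thing worth emphasizing in the write-up is that passing from braided monoidal categories to monoidal bicategories by a single suspension converts the "group completion" statement of Stasheff type into an actual homotopy equivalence, because the one-object bicategory $\Sigma(C,\otimes)$ automatically has $\pi_0=\ast$, thereby eliminating the group-completion hypothesis that would otherwise be required.
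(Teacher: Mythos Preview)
Your proposal is correct and follows essentially the same route as the paper: identify $\class(C,\otimes,\boldsymbol{c})$ with $\class(\Sigma(C,\otimes),\otimes)$ via Theorem~\ref{csmb}, observe that $\pi_0\Sigma(C,\otimes)$ is trivial because $\Sigma(C,\otimes)$ has a single object, and apply the second clause of Theorem~\ref{ltmb} together with the identification~(\ref{csmc}). The paper's proof is slightly terser but the logical content is identical.
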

\begin{proof} By Theorem \ref{csmb}, the classifying space of any braided monoidal category
$(C,\otimes,\boldsymbol{c})$ is the same as the classifying space of
the monoidal bicategory $\Sigma(\Sigma( C,\otimes),\otimes)$.
Therefore,
$\Omega\class(C,\otimes,\boldsymbol{c})=\Omega\class\Sigma(\Sigma(
C,\otimes),\otimes)$. Since $\Sigma(C,\otimes)$ has only one object,
it is obvious that its monoid of connected component $\pi_0\Sigma(
C,\otimes)=1$, the trivial group. Then, by Theorem \ref{ltmb}, there
is a homotopy equivalence $\class \Sigma (C,\otimes)\simeq
\Omega\class\Sigma(\Sigma( C,\otimes),\otimes)$. Since, by
(\ref{csmc}), $\class(C,\otimes)=\class \Sigma(C,\otimes)$, the
result follows.\end{proof}

\begin{corollary}\label{corbmc} For any braided monoidal category
$(C,\otimes,\boldsymbol{c})$, the double loop space of its
classifying space, $\Omega^2\class(C,\otimes,\boldsymbol{c})$, is a
group completion of the classifying space of the underlying
category, $\class C$.
\end{corollary}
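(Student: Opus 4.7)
The plan is to chain together Theorem \ref{stbmc} with the Stasheff-type result for monoidal categories (equivalently, Theorem \ref{ltmb} applied to $C$ viewed as a bicategory with only identity 2-cells). Intuitively, Theorem \ref{stbmc} already delivers one of the two delooping steps for a braided monoidal category, expressing $\class(C,\otimes)$ as a loop space of $\class(C,\otimes,\boldsymbol{c})$; what remains is to perform the remaining single delooping for the underlying monoidal category $(C,\otimes)$, which is precisely the content of the Stasheff--Mac Lane theorem recalled in the paper just before Theorem \ref{ltmb}.

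Concretely, I would proceed in three short steps. First, invoke Theorem \ref{stbmc} to obtain a homotopy equivalence
\[
\class(C,\otimes)\ \simeq\ \Omega\class(C,\otimes,\boldsymbol{c}).
\]
Second, apply the loop-space functor $\Omega$ to both sides; since $\Omega$ preserves homotopy equivalences of based spaces, this yields
\[
\Omega\class(C,\otimes)\ \simeq\ \Omega^2\class(C,\otimes,\boldsymbol{c}).
\]
Third, regard the monoidal category $(C,\otimes)$ as a monoidal bicategory whose 2-cells are all identities; its underlying bicategory is just $C$, and $\class(C,\otimes)$ coincides with the classifying space of the monoidal bicategory (by the identification \eqref{csmc}). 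Then Theorem \ref{ltmb}, specialized to this monoidal bicategory, states that $\Omega\class(C,\otimes)$ is a group completion of $\class C$.

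Combining these, $\Omega^2\class(C,\otimes,\boldsymbol{c})$ is homotopy equivalent to $\Omega\class(C,\otimes)$, which is a group completion of $\class C$, so $\Omega^2\class(C,\otimes,\boldsymbol{c})$ is itself a group completion of $\class C$ (group completions being characterized up to homotopy equivalence). Since all substantive content has been packaged into Theorems \ref{stbmc} and \ref{ltmb}, the argument is essentially a two-line formal consequence; the only point that merits mention is the observation that a group completion of a space $X$ is well defined up to homotopy equivalence, so any space homotopy equivalent to a group completion of $X$ is again a group completion of $X$. There is no genuine obstacle, as the hard technical work---namely, establishing Theorem \ref{ltmb} via the Segal nerve and Segal's Proposition 1.5 as in Lemma \ref{lms}---has already been carried out.
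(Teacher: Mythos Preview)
Your proof is correct and follows essentially the same route as the paper's own argument: invoke Theorem \ref{stbmc} and loop once to obtain $\Omega\class(C,\otimes)\simeq\Omega^2\class(C,\otimes,\boldsymbol{c})$, then apply Theorem \ref{ltmb} (specialized via \eqref{csmc}) to identify $\Omega\class(C,\otimes)$ as a group completion of $\class C$. The only difference is the order in which you cite the two theorems, which is immaterial.
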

\begin{proof}
By Theorem \ref{ltmb}, $\Omega\class(C,\otimes)$ is a group
completion of $\class C$. By Theorem \ref{stbmc} above, there is a
homotopy equivalence $\Omega\class(C,\otimes)\simeq
\Omega^2\class(C,\otimes,\boldsymbol{c})$, whence the result.
\end{proof}
\begin{remark}{\em When each suspension bicategory
$\Sigma(C^p,\otimes)$ is replaced in (\ref{psnbmc}) by its nerve,
that is, by $\ner(C^p,\otimes):\Delta^{\mathrm{op}}\to \cat$, then
one has a pseudo-bisimplicial category
$$\Delta^{\mathrm{op}}\times \Delta^{\mathrm{op}}\to
\cat,\hspace{0.4cm} ([p],[q])\mapsto C^{pq},$$ which (for
$(C,\otimes)$ strict) is taken in \cite{b-f-s-v} to construct a
double delooping space for the classifying space of the underlying
category $\class C$. That double delooping space is homotopy
equivalent to the classifying space of the braided monoidal
$\class(C,\otimes,\boldsymbol{c})$}.
\end{remark}

\section{The Street nerve of a tricategory}
With the notion of the classifying space of a tricategory $\T$ given
above,  the resulting CW-complex $\class\T$ thus obtained has many
cells with little apparent intuitive connection with the cells of
the original tricategory, and they do not enjoy any proper geometric
meaning. This leads one to search for any simplicial set realizing
the space $\class\T$ and whose cells give a logical geometric
meaning to the data of the tricategory. With the definition below
(which, up to minor changes and terminology,  is essentially due to
Street \cite{street2,street3}), we give a convincing natural
response for such a simplicial set.

\subsection{The geometric nerve of a tricategory} \label{subsec41}For any given tricategory $\T$, the construction
$I\mapsto \urep(I,\T)$ given in (\ref{urep}), which carries each
category $I$ to the set of unitary representations of $I$ in $\T$,
is clearly functorial on the small category $I$, whence we have the
following simplicial set:

\begin{definition}\label{goner} The {\em geometric nerve} of a tricategory $\T$ is the simplicial set
\begin{equation}\label{gn} \Delta\T:
\Delta^{\mathrm{op}}\ \to \ \set,\hspace{0.5cm} [p]\mapsto
\urep([p],\T),\end{equation} whose $p$-simplices are unitary
representations of the category $[p]$ in $\T$.
\end{definition}

The simplicial set  $\Delta\T$ encodes the entire tricategorical
structure of $\T$ and, as we will prove below, faithfully represent
the classifying space of the tricategory $\T$, up to homotopy. We
shall stress here that the simplices of the geometric nerve
$\Delta\T$  have the following pleasing  geometric description: The
vertices of $\gner \T$ are points labelled with the objects $ F_0$ of
$\T$. The 1-simplices are paths labelled with the 1-cells
\begin{equation}\label{dnt1}F_{01}: F_1\to F_0.\end{equation}The 2-simplices are oriented triangles
\begin{equation}\label{dnt2}
\xymatrix@C=10pt@R=10pt{& F_ 2\ar[ld]_{ F_{12}}\ar[rd]^{ F_{02}}
\ar@{}[d]|(0.58){ \overset{F_{012_{}}}\Rightarrow}& \\  F_ 1\ar[rr]_{ F_{01}}&& F_ 0,}
\end{equation}
with objects $ F_ i$ placed on the vertices, 1-cells $ F_{ij}:F_j\to
F_i$ on the edges, and labelling the inner as  a 2-cell $ F_{012}:
F_{01}\otimes  F_{12}\Rightarrow  F_{02}$. For $p\geq 3$,  a
$p$-simplex of $\gner \T$ is geometrically represented by a diagram
in $\T$ with the shape of the 3-skeleton of an oriented standard
$p$-simplex whose 3-faces are oriented tetrahedrons
\begin{equation}\label{dnt3}
\xymatrix @C=6pt@R=6pt{
 &  F_ l \ar[dl] \ar[dd]\ar[dr]& \\
 F_ k \ar[dr] \ar@{.>}[rr] & &  F_ i ,\\
 &  F_ j \ar[ur]
 & &}
\end{equation}
one for each $0\leq i<j<k<l\leq p$, whose faces
\begin{equation}\label{dnt4}
\xymatrix@C=10pt@R=10pt{& F_ l\ar[ld]_{ F_{kl}}\ar[rd]^{ F_{jl}}
\ar@{}[d]|(0.55){  \overset{ F_{jkl_{}}}\Rightarrow}& \\  F_ k\ar[rr]_{ F_{jk}}&& F_ j,}\hspace{0.3cm}
\xymatrix@C=10pt@R=10pt{& F_ l\ar[ld]_{ F_{kl}}\ar[rd]^{ F_{il }}
\ar@{}[d]|(0.58){ \overset{  F_{ikl_{}}}\Rightarrow}& \\  F_ k\ar[rr]_{ F_{ik}}&& F_ i,}\hspace{0.3cm}
\xymatrix@C=10pt@R=10pt{& F_ l\ar[ld]_{ F_{jl}}\ar[rd]^{ F_{il}}
\ar@{}[d]|(0.58){  \overset{ F_{ijl_{}}}\Rightarrow}& \\  F_ j\ar[rr]_{ F_{ij}}&& F_ i,}\hspace{0.3cm}
\xymatrix@C=10pt@R=10pt{& F_ k\ar[ld]_{ F_{jk}}\ar[rd]^{ F_{ik}}
\ar@{}[d]|(0.55){ \overset{  F_{ijk_{}}}\Rightarrow}& \\  F_ j\ar[rr]_{ F_{ij}}&& F_ i,}
\end{equation}
are geometric 2-simplices as above, and
\begin{equation}\label{dnt5}
\xymatrix@C=8pt@R=6pt{& F_ l\ar[dd]\ar[rd]\ar[ld] &&&&
 F_ l\ar[rd]\ar[ld]\ar@{}[d]|(0.6){\Rightarrow}&\\  F_ k\ar[rd] &\ar@{}[l]|(.4){\Rightarrow}& F_ i
\ar@{}[l]|(.6){\Uparrow}&\overset{   F_{ijkl}}\Rrightarrow& F_ k\ar[rr]\ar[rd]&& F_ i \\
& F_ j\ar[ru] &&&& F_ j\ar[ru]\ar@{}[u]|(.6){
\Uparrow}&}
\end{equation}
is a 3-cell of the tricategory that labels the inner of the tetrahedron. For $p\geq 4$, these data are required to satisfy the coherence condition $(\bf{CR1})$, as stated in
 Section \ref{secrep}; that is,  for each $0\leq i<j<k<l<m\leq p$, the following diagram commutes:
$$
\xymatrix@C=-3pt@R=6pt{&&F_m\ar[dll]\ar[drr]\ar[ldd]\ar[rdd]
\ar@{}[dddddrrrrrrrrrrrrrrrrrr]|{\text{(The fourth Street's oriental,\cite{street2}})}
\ar@{}[dd]|(.6){\Rightarrow}&&&&&&&&&&&&&&&&&&F_m\ar[dll]\ar[drr]\ar[ldd]&&\\
F_l\ar[dr]\ar@{}[r]|(.9){\Rightarrow}&&&\ar@{}[r]|(.1){\Uparrow}&F_i&\ar@3{->}[rrrrrrrrrrrrr]^{F_{ijkm}}&&&&&&&&&&&&&F_l\ar[dr]\ar@{}[r]|(.9){\Rightarrow}&&\ar@{}@<-4pt>[ru]|(.1){\Uparrow}&&F_i\\
&F_k\ar[rr]&\ar@3{->}[dd]\ar@{}@<-5pt>[dd]_{F_{jklm}}&F_j\ar[ru]&&&&&&&&&&&&&&&&F_k\ar[rr]\ar[rrru]&
\ar@3{->}[dd]\ar@{}@<5pt>[dd]^{F_{iklm}}&F_j\ar[ru]\ar@{}@<8pt>[ru]|(.2){\Uparrow}&\\
&&&&&\vspace{2cm}&&&&&&&&&&&&&\\
&&F_m\ar[dll]\ar[drr]\ar[rdd]&&&&&&&&&F_m\ar[dll]\ar[drr]\ar@{}[d]|(.6){\Rightarrow}&&&&&&&&&F_m\ar[dll]\ar[drr]\ar@{}[d]|(.6){\Rightarrow}&&\\
F_l\ar@{}[rr]|(.8){\Rightarrow}\ar[dr]\ar[rrrd]&&&\ar@{}[r]|(.1){\Uparrow}&F_i\ar@3{->}[rrrrr]^{F_{ijlm}}&&&\hspace{1cm}&&F_l\ar[rrrr]\ar[rrrd]\ar[dr]&&&&F_i\ar@3{->}[rrrrr]^{F_{ijkl}}&&\hspace{1cm}
&&&F_l\ar[dr]\ar[rrrr]&&&&F_i\\
&F_k\ar[rr]\ar@{}@<3pt>[ru]|(.3){\Uparrow}&&F_j\ar[ru]&&&&&&&F_k\ar[rr]\ar@{}@<3pt>[ru]|(.3){\Uparrow}&&F_j\ar[ru]\ar@{}@<16pt>[ru]|(.35){\Uparrow}&&&&&&&F_k\ar[rr]
\ar[rrru]\ar@{}@<1pt>[ru]|(.6){\Uparrow}
&&F_j\ar[ru]\ar@{}@<8pt>[ru]|(.2){\Uparrow}&}
$$

The simplicial set $\gner\T$ is
coskeletal in dimensions greater than 4. More precisely, for $p\geq
 4$, a $p$-simplex $ F:\gner[p]\to \T$ of
$\gner\T$ is determined uniquely by its
boundary $\partial  F=( F d^0,\ldots, F d^p)$
 $$\xymatrix@C=25pt@R=15pt{\partial\Delta[p]\ar[r]^{\partial  F}\ar@{_{(}->}[d]&
 \Delta\T,\\
\Delta[p]\ar[ru]_ F&}$$
 and,  for $p\geq 5$, every possible boundary of a $p$-simplex,
  $\partial\Delta[p]\to \Delta\T$,
  is actually the boundary $\partial  F$ of a geometric $p$-simplex $ F$ of the tricategory $\T$.

\subsection{Example: Geometric nerves of bicategories}\label{exgeobicat}
  When a bicategory $\b$ is regarded as a tricategory,
all of whose 3-cells are identities, then a unitary representation of
any category $I$ in it is the same as a unitary lax functor $I\to
\b$. Hence, the simplicial set $\Delta\b$ is precisely the unitary
{\em geometric nerve of the bicategory}, as it is called in
\cite{ccg} (but denoted by $\Delta^{\hspace{-3pt}^\mathrm{u}}\b$)
where, in Theorem 6.1, the following fact is proved:

\begin{fact} \label{viii}  For any bicategory $\b$,  there is a  homotopy
equivalence $\class\b\simeq |\Delta\b|$.
\end{fact}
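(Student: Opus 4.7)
The plan is to use the Segal nerve as a bridge: by Theorem \ref{segr}, $\class\b \simeq |\class\bsner\b|$, so it suffices to exhibit a homotopy equivalence $|\gner\b| \simeq |\class\bsner\b|$. Note that the $p$-simplices $\urep([p],\b)$ of $\gner\b$ are unitary lax functors, while the $0$-cells $\uhrep([p],\b)$ of the bicategory $\bsner_p\b = \buhrep([p],\b)$ are only the unitary \emph{homomorphic} ones, so $\gner_p\b$ is strictly larger than the set of objects of $\bsner_p\b$.

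First, I would consider the larger simplicial bicategory $\burep([-],\b)$ of all unitary representations and unitary icons between them. A construction in the spirit of Lemma \ref{rufc} (adapted from the free-category setting to the unitary-representation setting) should yield, at each level $p$, a cofibrant-replacement homomorphism rectifying any unitary representation $F:[p]\to\b$ into a unitary homomorphic one, together with an equivalence in $\burep([p],\b)$ connecting the two; by the same pattern as in the proof of Lemma \ref{rufc}, these assemble into a biequivalence $\bsner_p\b \hookrightarrow \burep([p],\b)$ at each level. By Facts \ref{iii} and \ref{fvi} this induces a homotopy equivalence $|\class\bsner\b|\simeq|\class\burep([-],\b)|$.

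Next, the main step is to compare $|\class\burep([-],\b)|$ with $|\gner\b|$. I would introduce the bisimplicial set $Z$ with $Z_{p,q}$ consisting of pairs $(F,\sigma)$, where $F\in\urep([p],\b)$ is a $p$-simplex of $\gner\b$ and $\sigma$ is a $q$-simplex of the nerve of the comma-like category whose objects are $1$-cells of $\burep([p],\b)$ with source $F$. The two canonical projections from $\diag Z$ --- one to $\gner\b$, forgetting $\sigma$, and the other to the simplicial nerve of $\burep([-],\b)$ realizing $|\class\burep([-],\b)|$ --- should both be weak equivalences: the first by contractibility of each fiber, which is the nerve of a category possessing the initial object $1_F$, and the second by a bisimplicial Quillen-style argument combined with Fact \ref{iv}.

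The hard part will be verifying that these two projections respect the coherence data of the bicategories $\burep([p],\b)$ simplicially across all face and degeneracy maps in the $p$-direction. This requires systematic use of Fact \ref{fpre1} to collapse the pseudo-equivalences $\chi_{a,b}$ and modifications $\omega_{a,b,c}$ of (\ref{chi}) and (\ref{omega}) that arise when passing between representations of the free category on $\mathcal{G}_p$ and those of $[p]$ itself, exactly as in the proof of Theorem \ref{pstc}. Once this is in place, the composite weak equivalence $|\gner\b|\simeq|\diag Z|\simeq|\class\burep([-],\b)|\simeq|\class\bsner\b|\simeq\class\b$ completes the proof.
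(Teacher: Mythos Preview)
The paper does not itself prove this Fact; it is quoted from \cite[Theorem 6.1]{ccg}, so there is no internal proof to compare against. Your proposal, however, contains a genuine error and leaves the substantive step unaddressed.

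The error is the claim that the inclusion $\bsner_p\b=\buhrep([p],\b)\hookrightarrow\burep([p],\b)$ is a biequivalence. It is not: a unitary lax functor $F:[p]\to\b$ whose structure 2-cells $F_{a,b}$ are not invertible cannot be equivalent, via an invertible icon, to any unitary homomorphic representation. Lemma~\ref{rufc} gives only a \emph{lax} transformation $\v:LR\Rightarrow 1$ on the full $\burep$; part~(c) upgrades this to a pseudo-equivalence only after restricting to $\bhrep$. What you can conclude is that $\class L_p$ is a homotopy equivalence, but via Fact~\ref{ii} (lax transformations induce homotopies), not Fact~\ref{iii}.

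The gap is that your bisimplicial comparison of $|\gner\b|$ with $|\class\burep([-],\b)|$ is the whole content of the result, and the ``comma-like category'' sketch does not pin it down. A cleaner route: since $\b$ is a bicategory, each $\burep([p],\b)$ is merely a \emph{category} (its 2-cells are families of 3-cells of $\b$, hence identities), so $\class\burep([p],\b)=|\ner\burep([p],\b)|$ and $|\class\burep([-],\b)|\cong|\diag\ner\burep([-],\b)|$. The simplicial set $\gner\b$ is the $0$-th row of this bisimplicial set, and one shows by an explicit simplicial homotopy (exactly the pattern used for (\ref{he2}) in the proof of Theorem~\ref{mainth}, specialized to trivial 3-cells) that each horizontal degeneracy is a deformation retract. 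With this in hand the detour through $\bsner\b$ and Theorem~\ref{segr} becomes unnecessary: compare $\class\b$ directly with $|\class\burep([-],\b)|$ via $L:\ner\b\to\burep([-],\b)$, Fact~\ref{ii}, Fact~\ref{fvi}, and Fact~\ref{iv}. This is non-circular precisely because, for a bicategory $\b$, the intermediate objects $\burep([q],\b)$ are categories rather than bicategories, so no appeal to Fact~\ref{viii} is needed at that stage.
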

The construction of the geometric nerve for a bicategory was first
given in the late eighties by J. Duskin and R. Street (see
\cite[pag. 573]{street}). In \cite{duskin}, Duskin gave  a
characterization of the unitary geometric nerve
 of a bicategory $\b$ in
terms of its simplicial structure. The result states that a
simplicial set is  isomorphic to the  geometric nerve of a
bicategory if and only if it satisfies the coskeletal conditions
above as well as supporting appropriate sets of `abstractly
invertible' 1- and 2-simplices (see Gurski \cite{gurski2}, for an
interesting new approach to this subject).

\subsection{Geometric nerves realize classifying spaces of
tricategories} We now state and prove a main result of the paper,
namely:

\begin{theorem}\label{mainth} For any tricategory $\T$, there is a
 homotopy equivalence $\class\T\simeq  |\Delta\T|$.
\end{theorem}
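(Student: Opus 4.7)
My plan is to go via the Segal nerve and then reduce the statement to a bisimplicial diagonalization argument comparing $\gner\T$ with a natural bisimplicial set.

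\textbf{Reduction via the Segal nerve.} By Theorem \ref{segr}, $\class\T\simeq|\class\bsner\T|$, the realization of the simplicial space $[p]\mapsto\class\bsner_p\T$ obtained by applying $\class$ levelwise to the Segal nerve. Fact \ref{viii}, applied to each bicategory $\bsner_p\T$ and natural in $p$ by Fact \ref{i} (since the simplicial operators of $\bsner\T$ are homomorphisms of bicategories), assembles into a levelwise weak equivalence of simplicial spaces, so that
\[
\class\T\ \simeq\ \bigl|\,[p]\mapsto|\gner\bsner_p\T|\,\bigr|.
\]
By Quillen's bisimplicial realization lemma~\cite[p.\,86]{quillen}, this double realization is homotopy equivalent to $|\diag X|$, where $X$ is the bisimplicial set $X_{p,q}=\gner_q\bsner_p\T=\urep\bigl([q],\buhrep([p],\T)\bigr)$. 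Thus the theorem is reduced to producing a weak equivalence between $\gner\T$ and $\diag X$.

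\textbf{Comparison map.} I would construct a natural simplicial map $\Psi\colon\diag X\to\gner\T$ by evaluation. A $p$-simplex of $\diag X$ is a unitary representation $G\colon[p]\to\buhrep([p],\T)$; because morphisms in $\buhrep([p],\T)$ require agreement on objects, the representations $G(i)\colon[p]\to\T$ share a common object assignment $t\colon[p]\to\mathrm{Ob}\T$. Define $\Psi(G)\colon[p]\to\T$ by $\Psi(G)_i:=t(i)$ on objects and $\Psi(G)_{ij}:=G(i)(i\!\to\!j)$ on 1-cells, and extract the higher cells of $\Psi(G)$ from the structure cells of the $G(i)$ together with the transition morphisms $G(i\!\to\!j)\colon G(j)\Rightarrow G(i)$ in $\buhrep([p],\T)$ and their coherence data. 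Simpliciality of $\Psi$ follows from the compatibility of these assignments with the face and degeneracy homomorphisms of $\bsner\T$ described in Theorem \ref{pstc}.

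\textbf{Main obstacle.} The decisive step is verifying that $\Psi$ is a weak equivalence of simplicial sets. My plan is to exploit the specialness of $\bsner\T$ established in Theorem \ref{bsners}(ii): the Segal projection biequivalences reduce $\bsner_p\T$, up to homotopy, to iterated fibre products of $\bsner_1\T=\bigsqcup_{t_0,t_1}\T(t_1,t_0)$, so by Fact \ref{iii} and the bicategorical case of Fact \ref{viii} (Example \ref{exgeobicat}) the base of an induction lives entirely inside the $2$-categorical realm. A Quillen theorem~A argument over the simplex category of $\gner\T$, combined with an induction on simplicial dimension, should deliver the equivalence. The principal technical difficulty is the coherent bookkeeping of the $3$-cells $F_{ijkl}$ and their compatibility with the tricategorical constraints $\pi,\mu,\lambda,\rho$---precisely the content of coherence axiom $\mathbf{(CR1)}$ of Section \ref{secrep}---as one climbs the simplicial dimensions, since at each level one must match the decorations produced by $\Psi$ with the Street oriental data prescribed by the geometric nerve.
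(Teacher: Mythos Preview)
Your reduction via the Segal nerve to $|\diag X|$ with $X_{p,q}=\urep([q],\buhrep([p],\T))$ is legitimate, but the remainder of the argument has a genuine gap, and the route you chose makes it harder, not easier, to close.

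The paper does not compare $\gner\T$ with $\diag X$ through an ad~hoc evaluation map. Instead it works with the larger simplicial bicategory $\bgner\T$, $[q]\mapsto\burep([q],\T)$, built from \emph{all} unitary representations rather than just the homomorphic ones. The point is that the simplicial set of objects of $\bgner\T$ is \emph{exactly} $\gner\T$, so $\gner\T$ sits inside $\diag\gner\bgner\T$ as the image of the iterated horizontal degeneracies $s^{\mathrm h}_{p-1}\cdots s^{\mathrm h}_0$. The core of the paper's proof is an explicit simplicial homotopy exhibiting each $s^{\mathrm h}_{p-1}\colon\gner_{p-1}\bgner\T\hookrightarrow\gner_p\bgner\T$ as a simplicial deformation retract (with retraction $d^{\mathrm h}_p$); this is the long combinatorial construction occupying most of the proof. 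No Theorem~A, no induction on simplicial dimension, no appeal to specialness of the Segal nerve is used for this step.

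Your choice of $\bsner\T=\buhrep([{-}],\T)$ in place of $\bgner\T$ breaks this mechanism: the object row of your $X$ is $[p]\mapsto\uhrep([p],\T)$, which is the \emph{homomorphic} geometric nerve, not $\gner\T=\urep([{-}],\T)$. So even if you reproduced the paper's deformation-retract argument, you would land at the wrong simplicial set and would still owe a comparison $|\uhrep([{-}],\T)|\simeq|\gner\T|$, which the paper never states or proves. Your proposed map $\Psi$ is meant to bypass this, but its definition is only sketched (it is not clear how the $3$-cells $\Psi(G)_{ijkl}$ are extracted, nor why the result satisfies $\mathbf{(CR1)}$), and the argument that $\Psi$ is a weak equivalence is, as you yourself say, only a plan. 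The specialness of $\bsner\T$ controls the homotopy type of each $\bsner_p\T$, but it gives no direct handle on how the geometric nerves $\gner\bsner_p\T$ glue together along the simplicial direction, which is precisely what you need.
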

\begin{proof}
 Let us consider, for any given tricategory $\T$,  the simplicial bicategory
\begin{equation}\label{eqrs}
\begin{array}{ll}\bgner\T:\Delta^{\mathrm{op}} \to \Hom\subset \bicat,&
[q]\mapsto \burep([q], \T),
\end{array}
\end{equation}
whose bicategories of $q$-simplices are the bicategories of unitary
representations (\ref{burep}) of $[q]$ in $\T$. In this simplicial
bicategory, the homomorphism induced by any map $a:[q]\to [p]$,
$a^*:\bgner_{p}\T \to \bgner_{q}\T$, is actually a 2-functor. Hence,
 the bisimplicial set
$$
\gner\bgner\T:\Delta^{\!^{\mathrm{op}}}\times
\Delta^{\!^{\mathrm{op}}} \to  \set,\hspace{0.4cm}
([p],[q])\mapsto \gner_p\bgner_q\T=\urep([p], \burep([q], \T)),
$$
is well defined, since the geometric nerve construction $\gner$ is
functorial on unitary homomorphisms between bicategories. The plan
is to prove the existence of  homotopy equivalences
\begin{equation}\label{he1} \class\T\simeq
|\diag \gner\bgner\T|,\end{equation}
\begin{equation}\label{he2} |\gner\T|\simeq
|\diag\gner\bgner\T|,\end{equation}  whence the
theorem follows.

\vspace{0.2cm} $\bullet$ {\em The homotopy equivalence
$(\ref{he1})$}: The Segal nerve of the tricategory  $(\ref{bsn})$ is
a simplicial sub-bicategory of $\bgner\T$. Let $L:\ner\T\to
\bgner\T$ be the pseudo-simplicial homomorphism obtained by
composing the pseudo simplicial homomorphism (\ref{cm2}), equally
denoted by $L:\ner\T\to\bsner\T$,  with the simplicial inclusion
$\bsner\T\subseteq \bgner\T$. Let us now observe that, at any degree
$p\geq 0$, the homomorphism $L_p:\ner_{p}\T\to \bgner_{p}\T$ is
precisely the homomorphism (\ref{ext}), $\burep(\mathcal{G}_p,\T)\to
\burep([p],\T)$, corresponding with the basic graph
$\mathcal{G}_p=(p\to \cdots\to 1\to 0)$ of the category $[p]$. Then,
by Lemma \ref{rufc}, we have a homomorphism (\ref{res}),
$R_p:\bgner_{p}\T\to \ner_{p}\T$, such that
$R_pL_p=1_{\ner_{p}\!\T}$, and a lax transformation
$\v_p:L_pR_p\Rightarrow 1_{\bgner_{p}\T}$. It follows from  Fact
\ref{ii} that every induced  map $\class L_p:\class
\ner_{p}\T\to\class\bgner_{p}\T$ is a homotopy equivalence. Then, by
Fact \ref{fvi}, the induced map
$\class\int_{\!\Delta}\!L:\class\!\int_{\!\Delta}\!\ner\T\to\class\!\int_{\!\Delta}\!\bgner\T$
is a homotopy equivalence. Let $\class\bgner\T:
\Delta^{\!^{\mathrm{op}}} \to \Top$ be the simplicial space obtained
by composing $\bgner\T$ with the classifying functor $\class:\Hom\to
\Top$ (see Fact \ref{i}). Since, by definition,
$\class\T=\class\!\int_{\!\Delta}\!\ner\T$, whereas, by Fact
\ref{iv}, there is a homotopy equivalence
$\class\!\int_{\!\Delta}\!\bgner\T\simeq |\class\bgner\T|$, we have
a homotopy equivalence $\class\T\simeq |\class\bgner\T|$.
Furthermore, by Fact \ref{viii} in Example \ref{exgeobicat}, we have
a homotopy equivalence
$$|\class\bgner\T|\!=\!|[q]\mapsto\class\bgner_q\T|\,
\simeq \,|[q]\mapsto|\gner \bgner_q\T||\! =\! |\diag \Delta\bgner\T
|,$$ where, for the latest equality (actually a homeomorphism) we
refer to Quillen's  Lemma in \cite[page 86]{quillen}. Thus,
$\class\T\simeq |\diag \Delta\bgner\T |$, as claimed.

\vspace{0.2cm} $\bullet$ {\em The homotopy equivalence
$(\ref{he2})$}: Note that the geometric nerve $\Delta\T$ is the
simplicial set of objects of the simplicial bicategory $\bgner\T$,
that is, $\Delta\T= \gner_0 \bgner\T$. Therefore, if we regard
$\Delta \T$ as a simplicial discrete bicategory (i.e., all 1-cells
and 2-cells are identities), then $\gner\gner\T$ becomes a
bisimplicial set that is constant in the horizontal direction, and
there is a natural bisimplicial map $\gner\gner\T\hookrightarrow
\gner \bgner\T$, which is, at each horizontal level $p\geq 1$, the
composite simplicial map
\begin{equation}\label{1.2.28'}\Delta\T=  \Delta_0
\bgner\T\overset{s_0^{\mathrm h}}\hookrightarrow
\Delta_1 \bgner\T\hookrightarrow
\cdots\!\hookrightarrow \Delta_{p-1}
\bgner\T\overset{s_{p-1}^{\mathrm h}}\hookrightarrow
\Delta_p \bgner\T.\end{equation}

Next, we  prove that the  simplicial map $ \Delta\T\to
\diag\Delta\bgner\T$, induced on diagonals,
  is a  weak homotopy equivalence, whence the announced
 homotopy equivalence in (\ref{he2}). It suffices to  prove that every  one of
 the
simplicial maps in (\ref{1.2.28'}) is a weak homotopy equivalence
and, in fact, we will prove more: {\em Every simplicial map
$s_{p-1}^{\mathrm h}$, $p\geq 1$, embeds the simplicial set
$\gner_{p-1} \bgner\T$ into
$\gner_p \bgner\T$ as a simplicial
deformation retract}.  Since $d_p^{\mathrm h}s_{p-1}^{\mathrm h}=1$,
it is enough to exhibit a simplicial homotopy $ h: s_{p-1}^{\mathrm
h}d_p^{\mathrm h}\Rightarrow 1:\Delta_p \bgner\T\to\Delta_p
\bgner\T$.

To do so, we shall use the following notation for the bisimplices in
$\Delta\bgner\T$. Since  such a bisimplex of bidegree $(p,q)$, say
$F\in \Delta_p \bgner_q\T$, is  a unitary representation of the
category $[p]$ in the bicategory of unitary representations
$\burep([q],\T)$, it consists of unitary representations $F^u:[q]\to
\T$, 1-cells
  $F^{u,v}:F^{v}\Rightarrow F^{u}$,  and 2-cells $F^{u,v,w}\!:F^{u,v}\circ F^{v,w}
   \Rrightarrow F^{u,w}$ in the bicategory $\burep([q],\T)$, for  $0\leq u< v<w\leq p$,
   such
that  the diagrams
$$\xymatrix@R=20pt@C=5pt{(F^{u,v}\circ F^{v,w})\circ
F^{w,t}\ar@{=>}[d]_{ F^{u,v,w}\circ\,
1}\ar@{=>}[rr]^{
{\boldsymbol{a}}}&&F^{u,v}\circ (F^{v,w}\circ F^{w,t})\ar@{=>}[d]^{ 1\circ
F^{v,w,t}}\\ F^{u,w}\circ F^{w,t}\ar@{=>}[r]^(0.6){
F^{u,w,t}}&F^{u,t}& F^{u,v}\circ F^{v,t}\ar@{=>}[l]_(0.53){
F^{u,v,t}}
 }$$
commute for $u<v< w< t$. Hence, such a $(p,q)$-simplex is described
by a list of cells of the tricategory $\T$
\begin{equation}\label{pq}F=\big(F_i,
F^{u}_{i,j},   F^{u}_{i,j,k}, F^{u}_{i,j,k,l}, F^{u,v}_{i,j},
F^{u,v}_{i,j,k}, F^{u,v,w}_{i,j} \big),\end{equation} with   $0\leq i<j<
k< l\leq q$, where each $F_i$ ($=F^{^{_0}}_i$) is an object, the
$F^{u}_{i,j}:F_j\to F_i$  are 1-cells,
 the $F^{u}_{i,j,k}:F^{u}_{i,j}\otimes F^{u}_{j,k}\Rightarrow
 F^{u}_{i,k}$,
and $F^{u,v}_{i,j}:F^{v}_{i,j}\Rightarrow F^{u}_{i,j}$ are 2-cells,
and the remaining
 are 3-cells as in
$$\xymatrix@R=20pt@C=10pt{(F^{u}_{i,j}\otimes F^{u}_{j,k})\otimes F^{u}_{k,l}\ar@{=>}[d]_{ F^{u}_{i,j,k}
\otimes 1}\ar@{}[rrd]|{ {
 F^{u}_{i,j,k,l}}\,\Rrightarrow}\ar@{=>}[rr]^{
{\boldsymbol{a}}}&&F^{u}_{i,j}\!\otimes\!
(F^{u}_{j,k}\!\otimes\!F^{u}_{k,l})\ar@{=>}[d]^{ 1\otimes
F^u_{j,k,l}}\\ F^{u}_{i,k}\!\otimes\!F^{u}_{k,l}\ar@{=>}[r]^{
F^{u}_{i,k,l}}&F^{u}_{i,l}&
F^{u}_{i,j}\!\otimes\!F^{u}_{j,l} ,\ar@{=>}[l]_{\
F^{u}_{i,j,l}}
 }
$$
$$
\xymatrix@R=20pt@C=42pt{F^{v}_{i,j}\!\otimes\!F^{v}_{j,k}\ar@{=>}[d]_{F^{v}_{i,j,k}}
 \ar@{}@<-8pt>[rd]^(.5){ { F^{^{_{u,v}}}_{i,j,k}}\,\Rrightarrow}
\ar@{=>}[r]^{F^{u,v}_{i,j}\otimes
F^{u,v}_{j,k}}&F^{u}_{i,j}\!\otimes\!F^{u}_{j,k}\ar@{=>}[d]^{F^{u}_{i,j,k}}\\
F^{u}_{i,k}
 \ar@{=>}[r]_{F^{u,v}_{i,k}}&F^{u}_{i,k},
 }
 \hspace{0.4cm}
   \xymatrix@R=20pt@C=25pt{&F^{w}_{i,j}\ar@{=>}[rd]^{F^{u,w}_{i,j}}
\ar@{}[d]|(.6){ { F^{u,v,w}_{i,j}}\,\Rrightarrow}
   \ar@{=>}[ld]_{F^{v,w}_{i,j}}&\\F^{v}_{i,j}\ar@{=>}[rr]_{F^{u,v}_{i,j}}
 &&
 F^{u}_{i,j},}
 $$
satisfying  the various conditions. The horizontal faces and
degeneracies of  such a bisimplex (\ref{pq})  are given by the simple
rules $d_m^{\mathrm h}F=(F_i,F^{d^m\!u}_{i,j},\dots)$
 and $s_m^{\mathrm h}F=(F_i,
F^{s^m\!u}_{i,j},\dots)$, whereas the vertical ones are given by
$d_m^{\mathrm v}F=(F_{d^mi}, F^{u}_{d^m\!i,d^m\!j},\dots)$ and
$s_m^{\mathrm v}F=(F_{s^mi}, F^{u}_{s^m\!i,s^m\!j},\dots)$.

  Then, we have  the following
simplicial homotopy $ h: s_{p-1}^{\mathrm h}d_p^{\mathrm
h}\Rightarrow 1:\Delta_p \bgner\T\to\Delta_p \bgner\T$. For each
$0\leq m\leq q$, the map $h_m:\Delta_p \bgner_q\T\to \Delta_p
\bgner_{q+1}\T$ takes a $(p,q)$-simplex (\ref{pq}) of $\Delta
\bgner\T$ to the $(p,q+1)$-simplex $h_mF$ defined by

\vspace{0.2cm} \noindent-  $(h_mF)_i=F_{s^mi}$,\hspace{0.3cm} for
$0\leq i\leq q+1$,

\vspace{0.2cm}\noindent-  $(h_mF)^{u}_{i,j}=F^{u}_{s^m\!i,s^m\!j}$
\hspace{0.3cm} if $u<p$ or $j\leq m$,

\vspace{0.2cm}\noindent- $(h_mF)^{p}_{i,j}= F^{p-1}_{s^m\!i,j-1}$
\hspace{0.3cm} if $ m<j$,

\vspace{0.2cm} \noindent-
$(h_mF)^u_{i,j,k}=F^u_{s^m\!i,s^m\!j,s^m\!k}$ \hspace{0.3cm} if
$u<p$ or $k\leq m$,

\vspace{0.2cm} \noindent- $(h_mF)^{p}_{i,j,k}=
F^{p-1}_{s^m\!i,j-1,k-1}$ \hspace{0.3cm}if $m<j$,

\vspace{0.2cm} \noindent- $(h_mF)^{p}_{i,j,k}$, \ for $j\leq m<k$,
 is the 2-cell obtained by the composition

\hspace{2cm}$
\xymatrix@C=35pt{F^{p}_{i,j}\!\otimes\!F^{p-1}_{j,k-1}\ar@{=>}[r]^-{F^{p-1,p}_{i,j}\otimes
1}&F^{p-1}_{i,j}\!\otimes\!F^{p-1}_{j,k-1}
\ar@{=>}[r]^-{F^{p-1}_{i,j,k-1}}&F^{p-1}_{i,k-1},} $

\vspace{0.2cm}\noindent-
$(h_mF)^u_{i,j,k,l}=F^u_{s^m\!i,s^m\!j,s^m\!k,s^m\!l}$
\hspace{0.3cm} if $u<p$ or $l\leq m$,

\vspace{0.2cm}\noindent-
$(h_mF)^p_{i,j,k,l}=F^{p-1}_{s^mi,j-1,k-1,l-1}$ \hspace{0.3cm} if
$m<j$,

\vspace{0.2cm}\noindent- $(h_mF)^p_{i,j,k,l}$,\ for $j\leq m<k$,  is
the 3-cell obtained by pasting the diagram
$$
\xymatrix@R=22pt{(F^{p}_{i,j}\!\otimes\!F^{p-1}_{j,k-1})\!\otimes\!F^{p-1}_{k-1,l-1}\ar@{}@<-4pt>[rd]|(.4){
\cong}\ar@{=>}[r]^{\boldsymbol{a}}
\ar@{=>}[d]_{(F^{p-1,p}_{i,j}\otimes 1)\otimes 1}&
F^{p}_{i,j}\!\otimes\!(F^{p-1}_{j,k-1}\!\otimes\!F^{p-1}_{k-1,l-1})\ar@{=>}[d]_{F^{p-1,p}_{i,j}\otimes
(1\otimes 1)} \ar@{=>}[rd]^{\ \ 1\otimes
F^{p-1}_{j,k-1,l-1}}&\ar@{}[ldd]^(0.45) {
\cong}\\
(F^{p-1}_{i,j}\!\otimes\!F^{p-1}_{j,k-1})\!\otimes\!F^{p-1}_{k-1,l-1}
\ar@{}@<-5pt>[rrd]|(0.35){{
 F^{p-1}_{i,j,k-1,l-1}}\,\Rrightarrow}\ar@{=>}[r]^{\boldsymbol{a}}
\ar@{=>}[d]_{F^{p-1}_{i,j,k-1}\otimes 1}&
F^{p-1}_{i,j}\!\otimes\!(F^{p-1}_{j,k-1}\!\otimes\!F^{p-1}_{k-1,l-1}
)\ar@{=>}[rd]_(.3){1\otimes
F^{p-1}_{j,k-1,l-1}}&F^{p}_{i,j}\!\otimes\!F^{p-1}_{j,l-1}
\ar@{=>}[d]^{F^{p-1,p}_{i,j}\otimes 1}\\
F^{p-1}_{i,k-1}\!\otimes\!F^{p-1}_{k-1,l-1}\ar@{=>}[r]^(.4){F^{p-1}_{i,k-1,l-1}}&F^{p-1}_{i,l-1}&
F^{p-1}_{i,j}\!\otimes\!F^{p-1}_{j,l-1} \ar@{=>}[l]_{F^{p-1}_{i,j,l-1}} }
$$

\vspace{0.2cm}\noindent- $(h_mF)^p_{i,j,k,l}$, \  for $k\leq m<l$,
is the 3-cell obtained by pasting in
$$
\xymatrix@C=10pt@R=15pt{&
(F^{p}_{i,j}\!\otimes\!F^{p}_{j,k})\!\otimes\!F^{p-1}_{k,l-1}
\ar@{}[rdd]_{ \cong}
\ar@{=>}[dd]^(0.3){(F^{p-1,p}_{i,j}\otimes F^{p-1,p}_{j,k})\otimes 1}
\ar@{=>}[r]^{\boldsymbol a}
\ar@{=>}[ld]_-{F^p_{i,j,k}\otimes 1}&F^{p}_{i,j}\!\otimes\!
(F^{p}_{j,k}\!\otimes\!F^{p-1}_{k,l-1})
\ar@{=>}[dd]|(0.7){F^{p-1,p}_{i,j}\otimes (F^{p-1,p}_{j,k}\otimes 1)}
\ar@{=>}[rd]^-{\ 1\otimes(F^{p-1,p}_{j,k}\otimes
1)}&\\
F^p_{i,k}\!\otimes\!F^{p-1}_{k,l-1}
\ar@{}@<-5pt>[rd]^{ { F^{p-1,p}_{i,j,k}\otimes 1}\,\Rrightarrow}
\ar@{=>}[dd]_{F^{p-1,p}_{i,k}\otimes 1}
 &&&F^{p}_{i,j}\!\otimes\!
(F^{p-1}_{j,k}\!\otimes\!F^{p-1}_{k,l-1})
\ar@{=>}[dd]^{1\otimes F^{p-1}_{j,k,l-1}}\\
&(F^{p-1}_{i,j}\!\otimes\!F^{p-1}_{j,k})\!\otimes\!F^{p-1}_{k,l-1}
\ar@{=>}[ld]^{F^{p-1}_{i,j,k}\otimes  1}
\ar@{=>}[r]^{\boldsymbol a}
&F^{p-1}_{i,j}\!\otimes\!
(F^{p-1}_{j,k}\!\otimes\!F^{p-1}_{k,l-1})
\ar@{=>}[dd]^{1\otimes F^{p-1}_{j,k,l-1}}\ar@{}[r]|(0.6){ \cong}&\\
F^{p-1}_{i,k}\!\otimes\!F^{p-1}_{k,l-1}
\ar@{}[rr]|(0.6){ { F^{p-1}_{i,j,k,l-1}}\,\Rrightarrow}
\ar@{=>}[rd]_{F^{p-1}_{i,k,l-1}}
&&&F^p_{i,j}\!\otimes\!F^{p-1}_{j,l-1}
\ar@{=>}[ld]^{\ F^{p-1,p}_{i,j}\otimes 1}\\
&F^{p-1}_{i,l-1}&F^{p-1}_{i,j}\!\otimes\!F^{p-1}_{j,l-1}
\ar@{=>}[l]_{F^{p-1}_{i,j,l-1}}&
}
$$
\noindent- $(h_mF)^{u,v}_{i,j}=F^{u,v}_{s^m\!i,s^m\!j}$
\hspace{0.3cm} if $v<p$ or $j\leq m$,

\vspace{0.2cm}\noindent- $(h_mF)^{u,p}_{i,j}=F^{u,p-1}_{s^m\!i,j-1}$
\hspace{0.3cm} if $j>m$,

\vspace{0.2cm}\noindent- $(h_mF)^{u,v}_{i,j,k}=
F^{u,v}_{s^m\!i,s^m\!j,s^m\!k}$ \hspace{0.3cm} if $v<p$ or $k\leq
m$,

\vspace{0.2cm}\noindent- $(h_mF)^{u,p}_{i,j,k}=
F^{u,p-1}_{s^m\!i,j-1,k-1}$ \hspace{0.2cm} if $m<j$,

\vspace{0.2cm}\noindent- $(h_mF)^{u,p}_{i,j,k}$, for $j\leq m<k$, is
the 3-cell is the obtained by pasting in
$$
\xymatrix@R=20pt@C=30pt{&F^p_{i,j}\!\otimes\!F^{p-1}_{j,k-1}\ar@{=>}[rd]^{\ F^{u,p}_{i,j}\otimes
F^{u,p-1}_{j,k-1}}\ar@{=>}[ld]_{F^{p-1,p}_{i,j}\otimes 1}&\\F^{p-1}_{i,j}\!\otimes\!F^{p-1}_{j,k-1}
\ar@{=>}[rr]_(.35){F^{u,p-1}_{i,j}\otimes F^{u,p-1}_{j,k-1}}\ar@{=>}[d]_{F^{p-1}_{i,j,k-1}}
\ar@{}@<8pt>[rr]^{ { F^{u,p-1,p}_{i,j}}\,\Rrightarrow}
&&F^{u}_{i,j}\!\otimes\!F^{u}_{j,k-1}\ar@{=>}[d]^{F^{u}_{i,j,k-1}}\\
F^{p-1}_{i,k-1}\ar@{=>}[rr]^(.35){F^{u,p-1}_{i,k-1}}
\ar@{}@<-4pt>[rru]|(.6){ { F^{u,p-1}_{i,j,k-1}}\,\Rrightarrow}&&F^{u}_{i,k-1}
}
$$
\noindent - $(h_mF)^{u,v}_i=F^{u,v}_{s^m\!i}$ \hspace{0.3cm} if
$v<p$ or $i\leq m$,

\vspace{0.2cm}\noindent-  $(h_mF)^{u,p}_{i}=F^{u,p-1}_{i-1}$
\hspace{0.3cm} if $m<i$;

\vspace{0.2cm} \noindent-
$(h_mF)^{u,v,w}_{i,j}=F^{u,v,w}_{s^m\!i,s^m\!j}$ \hspace{0.3cm} if
$w<p$ or $j\leq m$,

\vspace{0.2cm}\noindent-
$(h_mF)^{u,v,p}_{i,j}=F^{u,v,p-1}_{s^m\!i,j-1}$\hspace{0.3cm} if
$m<j$.

So defined, a straightforward (though quite tedious) verification
shows that ${h:s_{p-1}^{\mathrm h}d_p^{\mathrm h}\Rightarrow 1}$ is
actually a simplicial homotopy, and this completes the proof.
\end{proof}

\subsection{Example: Geometric nerves of $n$-categories}  On a small category $C$, viewed as a tricategory in
which all 2-cells and 3-cells are identities,  both the geometric
and the Grothendieck nerve constructions can be identified: $\Delta
C=\ner C$,  since, for any integer $p\geq 0$, we have
$\urep([p],C)=\func([p],C)\cong
\rep(\mathcal{G}_p,C)$. For instance
$\Delta[n]$ is the
simplicial standard $n$-simplex whose $p$-simplices are the maps
$[p]\to[n]$ in $\Delta$.

The case of geometric nerves of 2-categories was dealt with by
Street in \cite{street2}.  In \cite{tonks}, but under the name of
`2-nerve of 2-categories', the geometric nerve construction
$\Delta:\mathbf{2}\text{-}\cat\to \sset $ was considered for proving
that the category $\mathbf{2}\text{-}\cat$, of small 2-categories
and 2-functors, has a Quillen model structure such that the functor
$$Ex^2\,\Delta:\mathbf{2}\text{-}\cat\to
\sset$$ is a right Quillen equivalence of model categories, where
$Ex$ is the endofunctor in $\sset$ right adjoint to the subdivision
$Sd$ (see \cite{g-j}, for example). In \cite{b-c},  it was first
proved that, for any 2-category $\b$, there is a natural homotopy
equivalence $\class\b\simeq |\Delta\b|$. It follows that the
correspondence $\b\mapsto\class\b$ induces an equivalence between
the corresponding homotopy category of 2-categories and the ordinary
homotopy category of CW-complexes. In \cite{cegarra}, generalizations
are given of both Quillen's Theorem B and Thomason's Homotopy
Colimit Theorem to 2-functors.

In \cite{street2}, Street gave a precise notion of nerve for 
$n$-categories. He extended each graph $\mathcal{G}_p=(p\to
\cdots\to 1\to 0)$ to a ``free" $\omega$-category $\mathcal{O}_p$
(called the {\em $p^{\mathrm{th}}$-oriental}) such that, for any
$n$-category $\mathcal{X}$, the $p$-simplices of its nerve, are just
$n$-functors $\mathcal{O}_p\to\mathcal{X}$, from the underlying
$n$-category of the $p^{\mathrm{th}}$-oriental to $\mathcal{X}$. In
the case when $n=3$, Street's nerve construction on a $3$-category
$\T$ just produces, up to some directional changes, its geometric
nerve $\Delta\T$, as stated in Definition \ref{goner}.  From the
homotopy equivalences in  (\ref{neqt}) and Theorem \ref{mainth}, we
get the following new result (see Example \ref{ex3cat} for a
discussion about the notion of classifying space of a 3-category):
\begin{theorem}
For any $3$-category $\T$, there are homotopy equivalences
$$|\diag\ner\ner\ner\T|\simeq \class\T\simeq |\Delta\T|.$$
\end{theorem}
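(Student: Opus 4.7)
The statement is a direct synthesis of two results already established in the paper. Since any $3$-category is in particular a tricategory, Theorem \ref{mainth} gives the right-hand homotopy equivalence $\class\T\simeq |\Delta\T|$ with no further work, so the plan reduces to justifying the left-hand one, $|\diag\ner\ner\ner\T|\simeq \class\T$.

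For the left-hand equivalence, the plan is to invoke the chain of homotopy equivalences already displayed in \eqref{neqt}. First, one observes that for any $3$-category $\T$ the Grothendieck nerve construction \eqref{bsnt} actually produces a genuine simplicial 2-category $\ner\T:\Delta^{\!\mathrm{op}}\to \mathbf{2}\text{-}\cat\subset \Hom$, since composition in $\T$ is strictly associative and unital; thus all the pseudo-simplicial coherence data reduce to identities. Then Fact \ref{iv}, applied to the simplicial bicategory $\ner\T$, yields a natural homotopy equivalence
\[
\class\T \;=\; \class\!\int_{\!\Delta}\!\ner\T \;\simeq\; |\class\ner\T|\;=\;|[p]\mapsto \class\ner_{p}\T|.
\]
Each $\ner_{p}\T$ is itself a 2-category, so iterating the same fact (applied to the bicategories $\ner_{p}\T$ and then to the categories $\ner_{q}\ner_{p}\T$) gives the further homotopy equivalences
\[
|[p]\mapsto \class\ner_{p}\T|\;\simeq\;|[p]\mapsto |[q]\mapsto \class\ner_{q}\ner_{p}\T||\;\simeq\;|[p]\mapsto |[q]\mapsto |[r]\mapsto \ner_{r}\ner_{q}\ner_{p}\T|||,
\]
where in the last step one uses that the classifying space of a small category is the realization of its Grothendieck nerve. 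Finally, Quillen's Lemma \cite[page 86]{quillen} identifies the threefold realization of the trisimplicial set $\ner\ner\ner\T$ with the realization of its diagonal, giving the homeomorphism
\[
|[p]\mapsto |[q]\mapsto |[r]\mapsto \ner_{r}\ner_{q}\ner_{p}\T|||\;\cong\;|\diag\ner\ner\ner\T|.
\]
Concatenating these steps produces $\class\T\simeq |\diag\ner\ner\ner\T|$.

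Combining the two homotopy equivalences gives the announced chain $|\diag\ner\ner\ner\T|\simeq \class\T\simeq |\Delta\T|$. There is no genuine obstacle here: the entire argument is a bookkeeping exercise that recycles Theorem \ref{the3cat} (which already contains the left-hand equivalence explicitly) together with Theorem \ref{mainth}. The only conceptual point worth emphasising is that the strictness of the $3$-category $\T$ is exactly what is needed to make $\ner\T$ a strictly simplicial, rather than merely pseudo-simplicial, object, so that the iterated realization argument of \eqref{neqt} applies.
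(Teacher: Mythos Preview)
Your proposal is correct and follows the same approach as the paper, which simply cites the chain of equivalences \eqref{neqt} (i.e., Theorem~\ref{the3cat}) for the left-hand equivalence and Theorem~\ref{mainth} for the right-hand one. Your write-up merely unpacks the steps of \eqref{neqt} in slightly more detail, but the argument is identical.
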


\subsection{Example: Geometric nerves of braided monoidal
 categories} \label{nebmc} If $A$ is any abelian group, then the
braided monoidal category with only one object it defines,
$(A,+,0)$, has as
 double suspension the tricategory $\Sigma^2A$, treated in Examples \ref{eta} and
 \ref{eta2}. For any integer $p\geq 0$, we have
$$\urep([p],\Sigma^2A)\overset{\ref{eta}}= Z^3([p],A)=Z^3(\Delta[p],A)=K(A,3)_p,$$
whence $\Delta\Sigma^2 A=K(A,3)$,  the minimal Eilenberg-Mac Lane
complex. Hence, from Theorems \ref{csmb} and \ref{mainth}, it
follows that $\class(A,+,0)=|K(A,3)|$.

If $(C,\otimes,\boldsymbol{c})$ is any braided monoidal category,
then a unitary representation of a category $I$ in the double
suspension tricategory, $I\to \Sigma^2(C,\otimes,\boldsymbol{c})$,
is what was called in \cite[Definition 6.6]{c-c-g} and \cite[\S
4]{cegarra3} a (normal) {\em $3$-cocycle} of $I$ with coefficients
in the braided monoidal category. Therefore, the geometric nerve
$\Delta\Sigma^2(C,\otimes,\boldsymbol{c})$ coincides with the
simplicial set \cite[Definition 6.7]{c-c-g}
$$Z^3(C,\otimes,\boldsymbol{c}):\Delta^{\!^{\mathrm{op}}}\
\to \ \set,\hspace{0.4cm}[p]\mapsto
Z^3([p],(C,\otimes,\boldsymbol{c})),
$$
whose $p$-simplices are the 3-cocycles of $[p]$ in the braided
monoidal category. The geometric nerve $Z^3(C,\otimes,
\boldsymbol{c})$ is then a $4$-coskeletal 1-reduced (one vertex, one
1-simplex) simplicial set whose $2$-simplices are the objects
$F_{012}$ of $C$. The 3-simplices are morphisms of the form
$$
F_{0123}:F_{123}\otimes F_{013}\longrightarrow F_{012}\otimes F_{023},
$$  and, for $p\geq 4$, the $p$-simplices are families
of 3-simplices
$$ F_{ijkl}:F_{jkl}\otimes F_{ijl}\to F_{ijk}\otimes F_{ikl}
,$$
$0\leq i<j<k<l\leq p$, making commutative, for $0\leq i<j<k<l<m\leq
p$, the diagrams
$$\xymatrix@C=80pt@R=22pt{(F_{klm}\otimes F_{jkm})\otimes F_{ijm}\ar[d]_{ F_{jklm}\otimes 1}
\ar[r]^{ \boldsymbol{a}^{-1}(1\otimes F_{ijkm})\boldsymbol{a}}&
(F_{klm}\otimes F_{ijk})\otimes F_{ikm}\ar[d]^{\boldsymbol{c}\otimes 1}\\
(F_{jkl}\otimes F_{jlm})\otimes F_{ijm}\ar[d]_{(1\otimes F_{ijlm})\boldsymbol{a}}&
 (F_{ijk}\otimes F_{klm})\otimes F_{ikm}\ar[d]^{(1\otimes F_{iklm})\boldsymbol{a}}\\
F_{jkl}\otimes (F_{ijl}\otimes F_{ilm})\ar[r]^{ \boldsymbol{a}(F_{ijkl}\otimes 1)
\boldsymbol{a}^{-1}}& F_{ijk}\otimes (F_{ikl}\otimes F_{ilm}).}
$$

From Theorems \ref{csmb} and \ref{mainth}, we obtain the following
 known result:
\begin{theorem}[{\cite[Theorem 6.11]{c-c-g}}]
For any braided monoidal category $(C,\otimes,\boldsymbol{c})$,
there is a  homotopy equivalence
$\class(C,\otimes,\boldsymbol{c})\simeq
|Z^3(C,\otimes,\boldsymbol{c})|.$
\end{theorem}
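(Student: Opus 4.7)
The plan is to deduce the statement by chaining together the two main theorems proved in the paper, together with the simplicial-set identification already indicated in the paragraph preceding the theorem. First I would rewrite the left-hand side using Theorem \ref{csmb}, which gives the equality
\[
\class(C,\otimes,\boldsymbol{c}) = \class\Sigma^2(C,\otimes,\boldsymbol{c}),
\]
so the problem is reduced to computing the classifying space of the tricategory $\Sigma^2(C,\otimes,\boldsymbol{c})$.

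Next I would invoke Theorem \ref{mainth}, the main geometric-nerve result of the paper, applied to the tricategory $\T = \Sigma^2(C,\otimes,\boldsymbol{c})$. This immediately yields a homotopy equivalence
\[
\class\Sigma^2(C,\otimes,\boldsymbol{c}) \simeq |\gner\Sigma^2(C,\otimes,\boldsymbol{c})|.
\]
It then remains to identify the geometric nerve $\gner\Sigma^2(C,\otimes,\boldsymbol{c})$ with the simplicial set $Z^3(C,\otimes,\boldsymbol{c})$ of 3-cocycles. By Definition \ref{goner}, the $p$-simplices of $\gner\Sigma^2(C,\otimes,\boldsymbol{c})$ are the unitary representations $[p]\to\Sigma^2(C,\otimes,\boldsymbol{c})$, and, as recalled just before the theorem statement following \cite[Definition 6.6]{c-c-g}, unpacking Definition \ref{repr} against the doubly-degenerate structure of $\Sigma^2(C,\otimes,\boldsymbol{c})$ shows that such a unitary representation is exactly a (normal) $3$-cocycle of $[p]$ with coefficients in $(C,\otimes,\boldsymbol{c})$. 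Since both constructions are manifestly functorial in $[p]$, this bijection promotes to an isomorphism of simplicial sets $\gner\Sigma^2(C,\otimes,\boldsymbol{c}) \cong Z^3(C,\otimes,\boldsymbol{c})$, and passing to geometric realization gives the claimed equivalence.

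There is no real obstacle once Theorems \ref{csmb} and \ref{mainth} are in hand: the argument is a formal concatenation of two equivalences and one identification of simplicial sets. The only verification required is the level-wise matching between Definition \ref{repr} and the explicit 3-cocycle data displayed in Subsection \ref{nebmc} (objects $F_{012}$, morphisms $F_{0123}$, and the pentagonal coherence relating the $F_{ijklm}$). This matching is a direct unpacking in which the braiding $\boldsymbol{c}$ appears precisely where the interchange 3-cell of $\Sigma^2(C,\otimes,\boldsymbol{c})$ intervenes, so there is nothing substantive beyond bookkeeping to check.
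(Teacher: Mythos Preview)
Your proof is correct and follows exactly the same route as the paper: the result is derived immediately from Theorems \ref{csmb} and \ref{mainth} together with the identification $\gner\Sigma^2(C,\otimes,\boldsymbol{c})=Z^3(C,\otimes,\boldsymbol{c})$ spelled out in the paragraphs preceding the statement. There is nothing to add.
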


\subsection{Example: Geometric nerves of monoidal bicategories} If $(\b,\otimes)$ is any
monoidal bicategory, then we define its {\em geometric nerve},
denoted by
$$\gner(\b,\otimes),
$$
as the geometric nerve of its suspension 3-category
$\Sigma(\b,\otimes)$ (\ref{ome}). That is,
$$\gner(\b,\otimes):\Delta^{\mathrm{op}}\to \sset, \hspace{0.3cm} [p]\mapsto
\urep([p],\Sigma(\b,\otimes)).
$$

Then, Theorem $\ref{mainth}$ particularizes to monoidal bicategories
stating the following:
\begin{theorem}\label{cbbb} For any monoidal bicategory $(\b,\otimes)$, there is a homotopy equivalence
\begin{equation}\class(\b,\otimes)\simeq |\gner(\b,\otimes)|.\end{equation}
\end{theorem}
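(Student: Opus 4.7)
The proof will be essentially a one-line deduction from the main Theorem \ref{mainth}, since both sides of the claimed homotopy equivalence are defined via the suspension tricategory $\Sigma(\b,\otimes)$. I will first unwind the definitions on either side: by Definition \ref{dcsmb}, the left-hand side is $\class(\b,\otimes)=\class\Sigma(\b,\otimes)$, while by the definition just given above the statement, $\gner(\b,\otimes)=\gner\Sigma(\b,\otimes)$.

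Then I would simply apply Theorem \ref{mainth} to the tricategory $\T=\Sigma(\b,\otimes)$, which immediately gives a homotopy equivalence
\begin{equation*}
\class(\b,\otimes)=\class\Sigma(\b,\otimes)\simeq |\gner\Sigma(\b,\otimes)|=|\gner(\b,\otimes)|,
\end{equation*}
as required. There is no real obstacle here: all the hard work has already been done in the construction of $\Sigma(\b,\otimes)$ (Example \ref{emb}, showing that any monoidal bicategory naturally yields a one-object tricategory) and in the proof of Theorem \ref{mainth} (which combined the homotopy equivalences $\class\T\simeq |\diag\gner\bgner\T|$ and $|\gner\T|\simeq |\diag\gner\bgner\T|$ via the simplicial bicategory $\bgner\T$ of unitary representations).

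Thus the only thing to verify is that the definitions are set up so that this direct specialization is legitimate, which is precisely how $\class(\b,\otimes)$ and $\gner(\b,\otimes)$ were introduced, namely as $\class$ and $\gner$ of the suspension tricategory respectively. Hence the statement is a formal corollary of Theorem \ref{mainth}, and the proof can be written in two lines.
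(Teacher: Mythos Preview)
Your proposal is correct and matches the paper's approach exactly: the paper presents this theorem simply as the particularization of Theorem~\ref{mainth} to the tricategory $\T=\Sigma(\b,\otimes)$, with no further argument given. Your unwinding of the definitions $\class(\b,\otimes)=\class\Sigma(\b,\otimes)$ and $\gner(\b,\otimes)=\gner\Sigma(\b,\otimes)$ is precisely what is needed.
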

\subsection{Example: Bicategorical groups and homotopy 3-types} A {\em
bicategorical group}, also called a  (weak) {\em $3$-group}, or {\em
Gr-bicategory},  is a monoidal bicategory $(\b,\otimes)$, in which
every 2-cell is an isomorphism;
 that is, all the hom-categories $\b(x,y)$ are groupoids, every morphism $u:x\to y$ is an equivalence,
 that is,
 there exist a morphism $u':y\to x$ and 2-cells $u\circ u'\Rightarrow 1_y$ and
 $1_x\Rightarrow u'\circ u$, and each object $x$ has a quasi-inverse with respect to the tensor product;
 that is,  there is an object $x'$ with 1-cells $1\to
x\otimes x'$ and $x'\otimes x\to 1$. In other words, a bicategorical
group is a monoidal bigroupoid $(\b,\otimes)$ in which every object
$x$ is regular, in the sense that the  homomorphisms $x\otimes
-:\b\to \b$ and $-\otimes x:\b\to \b$ are biequivalences.

If $(\b,\otimes)$ is any monoidal bicategory, then its geometric
nerve $\gner(\b,\otimes)$ is
 a 4-coskeletal reduced (one vertex) simplicial set, which satisfies the Kan extension  condition
 if and only if $(\b,\otimes)$ is a bicategorical group. In such a case,
    the homotopy groups of its  geometric realization $\pi_i\class(\b,\otimes)=\pi_i\gner(\b,\otimes)$ are plainly recognized
 to be
\begin{itemize}
\item[-] $\pi_i\class(\b,\otimes)= 0$,\ \, if $i \neq 1,2,3$.
\vspace{0.1cm}
\item[-] $\pi_1\class(\b,\otimes)=\mathrm{Ob}\b/\!\thicksim$, \ the group of equivalence classes of objects in $\b$ where multiplication is induced by the tensor product.
    \vspace{0.1cm}
\item[-] $\pi_2\class(\b,\otimes)=\mathrm{Aut}_\b(1)/\!\cong$, \ the group of isomorphism classes of autoequivalences of the unit object where the operation is induced by the horizontal composition in $\b$.
    \vspace{0.1cm}
\item[-]  $\pi_3\class(\b,\otimes)=\mathrm{Aut}_\b(1_1)$,\  the group of automorphisms of the identity 1-cell of the unit object where the operation is vertical composition in $\b$.
\end{itemize}

Thus, bicategorical groups arise as algebraic path-connected
homotopy 3-types, a fact that supports the {\em Homotopy Hypothesis}
of Baez \cite{baezh}. Indeed, every  path-connected homotopy 3-type
can be realized in this way from a bicategorical group. That is, for
any given path-connected CW-complex $X$ for which $\pi_iX=0$ for
$i\geq 4$, there is a bicategorical group $(\B,\otimes)$ such that
$\class(\b,\otimes)$ is homotopy equivalent to $X$, as we show below
(cf. Berger \cite{berger}, Joyal-Tierney \cite{j-t}, Lack
\cite{lack2}, or Leroy \cite{leroy}, for alternative approaches to
this issue).

Given $X$ as above, let $M\subseteq S(X)$ be a minimal subcomplex
that is a deformation retract of the total singular complex of $X$,
so that $|M|\simeq X$. Taking into account the Postnikov
$k$-invariants, this minimal complex $M$ can be described (see
\cite[VI. Corollary 5.13]{g-j}), up to isomorphism, 
\begin{equation}\label{fff}M=K(B,3)\!\times_t\!(K(A,2)\!\times_h\!K(G,1)),\end{equation}
by means of the group $G=\pi_1X$, the $G$-modules $A=\pi_2X$ and
$B=\pi_3X$, and two maps, $$h:G^3\to A, \hspace{0.3cm}t:A^6\times
G^4\to B,$$ defining normalized cocycles $h\in Z^3(G,A)$  and  $t\in
Z^4(K(A,2)\!\times_h\!K(G,1),B)$. That is, $M$ is the $4^{\mathrm
{th}}$ coskeleton of the truncated simplicial set
$$\mathrm{tr}_4M= \xymatrix{B^4\!\times\! A^6\!\times\! G^4\ar@<4pt>[r]\ar@{}@<2pt>[r]^{d_0}\ar@{}@<-2pt>[r]_{d_4}\ar@{}[r]|{\cdots}\ar@<-4pt>[r]&B\!\times\!A^3\!\times\!G^3
\ar@/_1.2pc/[l] \ar@{}@<-20pt>[l]^(.3){s_0}\ar@/_2.2pc/[l] \ar@{}@<-27pt>[l]^(.2){s_3} \ar@{}@<-24pt>[l]^(.5){\cdots}
\ar@<4pt>[r]\ar@{}@<2pt>[r]^{d_0}\ar@{}@<-1pt>[r]_{d_3}\ar@{}[r]|{\cdots}\ar@<-3pt>[r]& \ar@/_1.2pc/[l] \ar@{}@<-20pt>[l]^(.3){s_0}\ar@/_2.2pc/[l] \ar@{}@<-27pt>[l]^(.2){s_2} A\!\times\! G^2 \ar@/_1.8pc/[l] \ar@<3pt>[r]\ar@{}@<1pt>[r]^{d_0}\ar@{}@<-1pt>[r]_{d_2}\ar[r]\ar@<-3pt>[r]&G
\ar@/_1.2pc/[l] \ar@{}@<-20.5pt>[l]^(.3){s_0}\ar@/_2pc/[l] \ar@{}@<-30pt>[l]^(.3){s_1}\ar@<2pt>[r]\ar@{}@<-0.5pt>[r]^{d_0}
\ar@<-2pt>[r]\ar@{}@<0.5pt>[r]_{d_1}& 1, \ar@/_1.2pc/[l] \ar@{}@<-20.5pt>[l]^(.3){s_0}
}
$$
whose face and degeneracy operators are given by ($\sigma_i\in G$, $
x_j\in A$, $u_k\in B$)

$
d_i(x_1,\sigma_1,\sigma_2)=\left\{\begin{array}{ll}\sigma_2&i=0,\\\sigma_1\sigma_2&i=1,\\\sigma_1&i=2.\end{array}\right.
$

$
d_i(u_1,x_1,x_2,x_3,\sigma_1,\sigma_2,\sigma_3)=\left\{\begin{array}{ll}({}^{\sigma_1^{-1}\!}\!x_3,\sigma_2,\sigma_3)&i=0,\\
(x_2+x_3, \sigma_1\sigma_2,\sigma_3)&i=1,\\
(x_1+x_2, \sigma_1,\sigma_2\sigma_3)&i=2,\\
(x_1-h(\sigma_1,\sigma_2,\sigma_3), \sigma_1,\sigma_2)&i=3.
\end{array}\right.
$

$ d_i(u_1,u_2,u_3,u_4,
x_1,x_2,x_3,x_4,x_5,x_6,\sigma_1,\sigma_2,\sigma_3,\sigma_4)=$
$$
\left\{\begin{array}{ll}({}^{\sigma_1^{-1}}\!u_4, {}^{ \sigma_1^{-1}\!}\!x_4,{}^{\sigma_1^{-1}\!}\!x_5, {}^{\sigma_1^{-1}\!}\!x_6,\sigma_2,\sigma_3,\sigma_4)&i=0,\\
(u_3+u_4,x_2+x_4,x_3+x_5,x_6, \sigma_1\sigma_2,\sigma_3,\sigma_4)&i=1,\\
(u_2+u_3,x_1+x_2,x_3,x_5+x_6, \sigma_1,\sigma_2\sigma_3,\sigma_4)&i=2,\\
(u_1+u_2,x_1,x_2+x_3,x_4+x_5, \sigma_1,\sigma_2,\sigma_3\sigma_4)&i=3\\
(\bar{u}_1, \bar{x}_1,\bar{x}_2,\bar{x}_3,\sigma_1,\sigma_2,\sigma_3)&i=4,
\end{array}\right.
$$
where
$\bar{u}_1=u_1-t(x_1,x_2,x_3,x_4,x_5,x_6,\sigma_1,\sigma_2,\sigma_3,\sigma_4)$,
$\bar{x}_1=x_1-h(\sigma_1,\sigma_2,\sigma_3\sigma_4)+
h(\sigma_1,\sigma_2,\sigma_3)$,
$\bar{x}_2=x_2-h(\sigma_1\sigma_2,\sigma_3,\sigma_4)+{}^{\sigma_1^{-1}}\!h(\sigma_2,\sigma_3,\sigma_4)$,
and $\bar{x}_3=
x_4-{}^{\sigma_1^{-1}}\!h(\sigma_2,\sigma_3,\sigma_4)$.

$s_i(\sigma_1)=\left\{\begin{array}{ll}(0,1,\sigma_1)&i=0,\\
(0,\sigma_1,1)&i=1.\end{array}\right. $

$ s_i(x_1,\sigma_1,\sigma_2)=\left\{\begin{array}{ll}
(0,0,0,x_1,1,\sigma_1,\sigma_2)& i=0,\\
(0,0,x_1,0,\sigma_1,1,\sigma_2)& i=1,\\
(0,x_1,0,0,\sigma_1,\sigma_2,1)& i=2.
\end{array}\right.
$

$
s_i(u_1,x_1,x_2,x_3,\sigma_1,\sigma_2,\sigma_3)=\left\{\begin{array}{ll}(0,0,0,u_1,0,0,0,x_1,x_2,x_3,1,\sigma_1,\sigma_2,\sigma_3)&i=0,\\
(0,0,u_1,0,0,x_1,x_2,0,0,x_3,\sigma_1,1,\sigma_2,\sigma_3)&i=1,\\
(0,u_1,0,0,x_1,0,x_2,0,x_3,0,\sigma_1,\sigma_2,1,\sigma_3)&i=2,\\
(u_1,0,0,0,x_1,x_2,0,x_3,0,0,\sigma_1,\sigma_2,\sigma_3,1)&i=3.\\
\end{array}\right.$

Then, a bicategorical group $(\b,\otimes)$ with a simplicial
isomorphism $\gner(\b,\otimes)\cong M$ is defined as follows: a
0-cell of $\b$ is an element $\sigma\in G$. If $\sigma\neq \tau$ are
different elements of $G$, then $\b(\sigma,\tau)=\emptyset$, that
is, there is no 1-cell  between them, whereas if $\sigma=\tau$, then
1-cell $x:\sigma\to\sigma$ is an element $x\in A$. Similarly, there
is no any 2-cell in $\b$ between two 1-cells $x,y:\sigma\to \sigma$
if  $x\neq y$, whereas, when $x=y$,  a 2-cell $u:x\Rightarrow x$ is an
element $u\in B$. The vertical composition of 2-cells is given by addition in
$B$, that is,
$$
(x\overset{u}\Longrightarrow x)\cdot (x\overset{v}\Longrightarrow x)=(x\overset{u+v}\Longrightarrow x).
$$
The horizontal composition of 1-cells and 2-cells is given by
addition in $A$ and $B$ respectively, that is,
$$\xymatrix{(\sigma\ar@/^0.7pc/[r]^{x}\ar@{}[r]|{\Downarrow u}
\ar@/_0.7pc/[r]_{x}& \sigma)}\hspace{-2pt}\circ\hspace{-2pt}
\xymatrix{(\sigma\ar@/^0.7pc/[r]^{y}\ar@{}[r]|{\Downarrow v}
\ar@/_0.7pc/[r]_{y}& \sigma)}=\xymatrix@C=30pt{(\sigma\ar@/^0.8pc/[r]^{x+y}\ar@{}[r]|{\Downarrow u+v}
\ar@/_0.8pc/[r]_{x+y}& \sigma).}
$$
The associativity isomorphism is
$$
\xymatrix@C=35pt{\sigma\ar@/^0.8pc/[r]^{(x+y)+z}\ar@{}[r]|{\Downarrow \boldsymbol{a}}
\ar@/_0.8pc/[r]_{x+(y+z)}& \sigma,&\hspace{-1cm}\boldsymbol{a}=t(x,y,z,0,0,0,\sigma,1,1,1),}
$$
and the 0 of A gives the (strict) unit on each $\sigma$, that is,
$1_\sigma=0:\sigma\to\sigma$.

The (strictly unitary) tensor homomorphism $\otimes:\b\times
\b\to\b$ is given on cells of $\b$ by
$$\xymatrix{(\sigma\ar@/^0.7pc/[r]^{x}\ar@{}[r]|{\Downarrow u}
\ar@/_0.7pc/[r]_{x}& \sigma)}\hspace{-2pt}\otimes\hspace{-2pt}
\xymatrix{(\tau\ar@/^0.7pc/[r]^{y}\ar@{}[r]|{\Downarrow v}
\ar@/_0.7pc/[r]_{y}& \tau)}=\xymatrix@C=30pt{(\sigma\tau\ar@/^0.8pc/[r]^{x+{}^{\sigma}\!y}
\ar@{}[r]|{\Downarrow u+{}^{\sigma}\!v}
\ar@/_0.8pc/[r]_{x+{}^{\sigma}\!y}& \sigma\tau),}
$$
and the structure interchange isomorphism, for any 1-cells
$\sigma\overset{x'}\to\sigma\overset{x}\to\sigma$ and
$\tau\overset{y'}\to\tau\overset{y}\to\tau$,
$$
\xymatrix@C=50pt{\sigma\tau\ar@/^0.8pc/[r]^{(x+{}^{\sigma}\!y)+(x'+{}^{\sigma}\!y')}
\ar@{}[r]|{\Downarrow}
\ar@/_0.8pc/[r]_{(x+x')+{}^{\sigma}\!(y+y')}& \sigma\tau,}
$$
is that obtained by pasting in the bigroupoid $\b$ the diagram
$$
\xymatrix{\sigma\tau\ar[r]^{^{\sigma}\!y'}\ar[dr]_{^\sigma\!(y+y')}\ar@{}@<10pt>[rd]|(.55){\Leftarrow \chi}&
\sigma\tau\ar[d]
\ar@{}@<-2pt>[d]^{^{\sigma\!}\!y} \ar@{}@<3pt>[dr]|(.4){\Downarrow\, \boldsymbol{c}}
\ar[r]^{x'}&\sigma\tau\ar[r]^{^\sigma\!y}&\sigma\tau\ar[d]^{x}
\\
& \sigma\tau\ar[rr]_{x+x'}\ar[rru]^(.45){x'}&&\sigma\tau\ar@{}[lu]^{\Downarrow\bar{\chi}}}
$$
where

$ \chi=-t(0,0,0,{}^{\sigma}\!y,{}^{\sigma}\!y',0,\sigma,\tau,1,1)$,

$\boldsymbol{c}=t(0,x,0,0,{}^{\sigma}\!y,0,\sigma,\tau,1,1)-t(0,0,x,{}^{\sigma}\!y,0,0,\sigma,\tau,1,1)-t(x,0,0,0,0,{}^{\sigma}\!y,\sigma,1,\tau,1)$,

$\bar{\chi}=-t(0,x,x',0,0,0,\sigma,\tau,1,1)+t(x,0,x',0,0,0,\sigma,1,\tau,1)-t(x,x',0,0,0,0,\sigma,1,1,\tau)$.

\vspace{0.2cm} The associativity pseudo-equivalence
$(-\otimes -)\otimes -\overset{\boldsymbol{a}}\Rightarrow -\otimes(-\otimes
-):\b^3\to\b$ is defined by the 1-cells
$$h(\sigma,\tau,\gamma):(\sigma\tau)\gamma\to\sigma(\tau\gamma).$$
The naturality component of $\boldsymbol{a}$, at any 1-cells
$\sigma\overset{x}\to\sigma$,  $\tau\overset{y}\to\tau$ and
$\gamma\overset{z}\to\gamma$,
$$
\xymatrix@C=40pt@R=20pt{(\sigma\tau)\gamma\ar@{}[rd]|{\Rightarrow}\ar[r]^{h=h(\sigma,\tau,\gamma)}\ar[d]_{(x+{}^{\sigma}\!y)+{}^{\sigma\tau}\!z}&
\sigma(\tau\gamma)\ar[d]^{x+{}^{\sigma}\!(y+{}^{\tau}\!z)}\\
(\sigma\tau)\gamma\ar[r]_{h(\sigma,\tau,\gamma)}&\sigma(\tau\gamma)
}
$$
is given by pasting in $\b$ the diagram
$$
\xymatrix{(\sigma\tau)\gamma\ar[d]_{x}\ar[r]^{h}\ar@{}[rd]|(.4){\Downarrow\Omega}&\sigma(\tau\gamma)\ar[r]^{x} \ar@{}[rd]|{\Downarrow\Psi}&\sigma(\tau\gamma)\ar[r]^{{}^{\sigma}\!y}\ar@{}[rd]|(.6){\Downarrow\Phi}&\sigma(\tau\gamma)\ar[d]^{{}^{\sigma\tau}\!z}\\
(\sigma\tau)\gamma\ar[rru]^{h}\ar[r]_{{}^{\sigma}\!y}&(\sigma\tau)\gamma\ar[rru]^{h}\ar[r]_{{}^{\sigma\tau}\!z}&(\sigma\tau)\gamma\ar[r]_{h}&\sigma(\tau\gamma)
}
$$
where

\noindent$\begin{array}{cl}\Phi=&\hspace{-8pt}t(0,h,0,0,{}^{\sigma\tau}\!z,0,\sigma,\tau\gamma,1,1)-t(h,0,0,0,0,{}^{\sigma\tau}\!z,\sigma,\tau,\gamma,1)-
t(0,0,h,{}^{\sigma\tau}\!z,0,0,\sigma,\tau\gamma,1,1),\end{array}$

\noindent$\begin{array}{cl}\Psi=&\hspace{-8pt}t(h,0,0,{}^{\sigma}\!y,0,0, \sigma,\tau,1, \gamma)-t(h,0,0,0,{}^{\sigma}\!y,0,\sigma,\tau,\gamma,1)+t(0,h,0,0,{}^{\sigma}\!y,0,\sigma,\tau\gamma,1,1)\\
&\hspace{-7pt}-t(0,0,h,{}^{\sigma}\!y,0,0,\sigma,\tau\gamma,1,1),\end{array}$

\noindent$\begin{array}{cl}\Omega=&\hspace{-8pt}-t(x,h,0,0,0,0,\sigma,1,\tau,\gamma)+t(h,x,0,0,0,0,\sigma,\tau,1,\gamma)-t(h,0,x,0,0,0,\sigma,\tau,\gamma,1)\\
&\hspace{-7pt}+t(x,0,h,0,0,0,\sigma,1,\tau\gamma,1)+t(0,h,x,0,0,0,\sigma,\tau\gamma,1,1)-t(0,x,h,0,0,0,\sigma,\tau\gamma,1,1),\end{array}$

\vspace{0.2cm} The structure modification $\pi$, at any objects
$\sigma,\tau,\gamma,\delta\in G$, is
$$
\xymatrix@C=55pt{((\sigma\tau)\gamma)\delta\ar@{}[rrd]|{ \Rightarrow}\ar@{}@<4pt>[rrd]|(.48){\pi }\ar[d]_{h_4=h(\sigma,\tau,\gamma)}\ar[rr]^{h_1=h(\sigma\tau,\gamma,\delta)}&&(\sigma\tau)(\gamma\delta)
\ar[d]^{h_3=h(\sigma,\tau,\gamma\delta)}\\ (\sigma(\tau\gamma))\delta\ar[r]^{h_2=h(\sigma,\tau\gamma,\delta)}&\sigma((\tau\gamma)\delta)
\ar[r]^{h_0={}^{\sigma}\!h(\tau,\gamma,\delta)}&\sigma(\tau(\gamma\delta)),}
$$
\noindent$\begin{array}{cl}\pi=&\hspace{-5pt}t(h_3,h_1-h_0,0,h_0,0,0,\sigma,\tau,\gamma,\delta)
-t(h_2,h_4,0,0,0,0,\sigma,\tau\gamma,1,\delta)\\&\hspace{-7pt}+
t(h_2,0,h_4,0,0,0,\sigma,\tau\gamma,\delta,1)
-t(h_3,0,h_1-h_0,0,h_0,0,\sigma,\tau,\gamma\delta,1)\\
&\hspace{-7pt}+
t(0,h_3,h_1-h_0,0,h_0,0,\sigma,\tau\gamma\delta,1,1)-
t(0,0,h_2+h_4,h_0,0,0,\sigma,\tau\gamma\delta,1,1)\\
&\hspace{-7pt}-t(0,h_2,h_4,0,0,0,\sigma,\tau\gamma\delta,1,1).\end{array}$

This completes the description of bicategorical group
$(\b,\otimes)$, whose geometric nerve is recognized to be isomorphic
to the minimal complex $M$ in (\ref{fff}) by means of the simplicial
map $\varphi:\gner(\b,\otimes)\to M$ which, in dimensions $\leq 4$,
$$ \xymatrix@C=15pt@R=20pt{\Delta_4(\b,\otimes) \ar@<4pt>[r]\ar@{}@<2pt>[r]
\ar@{}[r]|{\cdots}\ar@<-4pt>[r]\ar[d]^{\varphi}&\Delta_3(\b,\otimes)
\ar@<4pt>[r]\ar@{}[r]|{\cdots}\ar@<-3pt>[r]\ar[d]^{\varphi}&
\Delta_2(\b,\otimes)  \ar@<3pt>[r]\ar[r]
 \ar@<-3pt>[r]\ar[d]^{\varphi}&\Delta_1(\b,\otimes)
\ar@<2pt>[r]
\ar@<-2pt>[r]\ar[d]^{\varphi}& \ar[d]1,\\
B^4\!\times\!A^6\!\times G^4 \ar@<4pt>[r]\ar@{}@<2pt>[r]
\ar@{}[r]|{\cdots}\ar@<-4pt>[r]&B\!\times\!A^3\!\times G^3
\ar@<4pt>[r]\ar@{}[r]|{\cdots}\ar@<-3pt>[r]&
A\!\times G^2 \ar@<3pt>[r]\ar[r]
 \ar@<-3pt>[r]&G
\ar@<2pt>[r]
\ar@<-2pt>[r]& 1,
}
$$
 is defined as follows (keeping the notations stated in $(\ref{dnt1})-(\ref{dnt5}))$: It
 carries a (unitary) representation $F:[1]\to \Sigma(\b,\otimes)$ to $\varphi(F)=F_{01}$, a representation
  $F:[2]\to \Sigma(\b,\otimes)$
  to $\varphi(F)=(-F_{012},F_{01},F_{12})$, a representation $F:[3]\to \Sigma(\b,\otimes)$ to
$$\varphi(F)= (-\!F_{0123},-\!{}^{F_{01}}\!F_{123}\!+\!F_{023}\!-\!F_{013},{}^{F_{01}}\!F_{123}\!-\!F_{023}
  ,-{}^{F_{01}}\!F_{123},F_{01},F_{12},F_{23}),$$
and a representation $F:[4]\to \Sigma(\b,\otimes)$ to
$$\varphi(F)=(u_1,u_2,u_3,u_4,
x_1,x_2,x_3,x_4,x_5,x_6,F_{01},F_{12},F_{23},F_{34}),$$ where

$
\begin{array}{ll}
u_1={}^{F_{01}}\!F_{1234}\!-\!F_{0124}+F_{0134}\!-\!F_{0234},&x_2=
{}^{F_{01}}\!F_{124}\!-\!{}^{F_{01}}\!F_{134}\!+\!F_{034}\!-\!F_{024},\\
u_2=F_{0234}\!-\!F_{0134}\!-\!{}^{F_{01}}\!F_{1234},& x_3={}^{F_{01}}\!F_{134}\!-\!F_{034},\\
u_3={}^{F_{01}}\!F_{1234}\!-\!F_{0234},& x_4={}^{F_{01}}\!F_{134}\!-\!{}^{F_{01}}\!F_{124}\!-\!{}^{F_{02}}\!F_{234},\\
u_4=-{}^{F_{01}}\!F_{1234},& x_5={}^{F_{02}}\!F_{234}\!-\!{}^{F_{01}}\!F_{134},\\
x_1=\!F_{024}\!
-\!{}^{F_{01}}\!F_{124}\!-\!F_{014},&  x_6=-\!{}^{F_{02}}\!F_{234}.
\end{array}
$

\vspace{0.2cm} To finish, we shall remark on two  particular relevant
cases of the demostrated relationship between monoidal bicategories and
path-connected homotopy 3-types. Since {\em categorical groups}
\cite[\S 3]{joyal} are the same thing as bicategorical groups in
which all 2-cells are identities, then categorical groups
are algebraic models for path-connected homotopy 2-types. This fact
goes back to Whitehead (1949) and Mac Lane-Whitehead (1950)
since every categorical group is equivalent to a strict one, and
strict categorical groups are the same as crossed modules. On the
other hand, if $(C,\otimes,\boldsymbol{c})$ is any {\em braided
categorical group}
 \cite{joyal}, then its classifying
space $\class(C,\otimes,\boldsymbol{c})$  is  the classifying space
of its suspension bicategorical group $(\Sigma(C,\otimes),\otimes)$
(see Examples \ref{ebmc} and \ref{nebmc}), which is precisely a
one-object bicategorical group. Therefore, we conclude from the above
discussion that braided categorical groups are algebraic models for
path-connected simply connected homotopy 3-types, a fact due to
Joyal and Tierney \cite{j-t}, but also proved in \cite{c-c} and,
implicitly, in \cite{joyal}.


\begin{thebibliography}{99}

\bibitem{b-m} J.C. Baez, M. Neuchl. Higher-dimensional algebra. I.
Braided monoidal 2-categories. Adv. Math. 121 (1996) 196-244.

\bibitem{baezh} J.C. Baez, The Homotopy Hypothesis (http://math.ucr.edu/home/baez/homotopy).

\bibitem{benabou} J. B\'{e}nabou, Introduction to bicategories, in ``Reports of the Midwest Category Seminar",
 Lecture Notes in Math. 47, Springer-Verlag, Berlin-New York (1967) 1-77.

\bibitem{b-f-s-v} C. Baltenau, Z. Fiedorowicz, R. Schw\"{a}nzl, R. Vogt,
Iterated monoidal categories, Adv. Math. 176 (2003) 277-349.

\bibitem{berger} C. Berger, Double loop spaces, braided monoidal categories and algebraic 3-type of space,
 Contemp. Math. 227 (1999) 49-66.



\bibitem{b-c} M.  Bullejos, A.M.  Cegarra, On the geometry of 2-categories and their classifying spaces,
 $K$-Theory 29 (2003) 211-229.

\bibitem{b-c2} M. Bullejos, A. M. Cegarra, Classifying spaces for monoidal categories through geometric nerves,
 Canad. Math. Bull. 47  (2004) 321-331.

\bibitem{c-c} P. Carrasco, A. M. Cegarra, (Braided) Tensor structures on
homotopy groupoids and nerves of (braided) categorical groups, Comm.
in Algebra 24 (1996) 3995-4058.

\bibitem{ccg} P. Carrasco, A.M. Cegarra, A.R. Garz\'{o}n, Nerves
and Classifying Spaces for Bicategories, Algebraic and Geometric
Topology 10 (2010) 219-274.

\bibitem{c-c-g} P. Carrasco, A.M. Cegarra, A.R. Garz\'on, Classifying spaces for braided monoidal
categories and lax diagrams of bicategories, Adv. Math. 226 (2011)
419-483.

\bibitem{cegarra} A.M. Cegarra, Homotopy fiber sequences induced by
2-functors, J. Pure Appl. Algebra 215 (2011) 310-334.

\bibitem{cegarra3} A. M. Cegarra, E. Khmaladze, Homotopy classification of graded Picard categories,
 Adv. Math., 213 (2007) 644-686.




\bibitem{duskin} J. Duskin, Simplicial matrices and the nerves of weak $n$-categories. I.
 Nerves of bicategories,  Theory Appl. Cat. 9 (2002) 198-308.

\bibitem{fie}  Z. Fiedorowicz, The symmetric bar construction (http://www.math.ohio-state.edu/~fiedorow/).



\bibitem{garner-gurski} R. Garner, N. Gurski, The low-dimensional structures that
tricategories form, Math. Proc. Camb. Philos. Soc. 146
(2009) 551-589.


\bibitem{g-j} P.G. Goerss, J.F. Jardine, Simplicial homotopy theory, Progress in Math. 174,
  Birkh\"{a}user Verlag, 1999.

\bibitem{g-p-s} R. Gordon, A.J.  Power, R. Street,  Coherence for tricategories,
Mem. Amer. Math. Soc. 117 (558) (1995).


\bibitem{groth} A. Grothendieck, Techniques de construction et th\'{e}or\`{e}mes d'existence en g\'{e}eom\'{e}trie alg\'{e}rique. III. Pr\'{e}schemas quotients, S\'{e}minaire Bourbaki, 13e ann\'{e}e, 1960/61, no 212, F\'{e}vrier 1961.

\bibitem{groth71} A. Grothendieck, Cat\'{e}gories fibr\'{e}es et d\'{e}scente, SGA I,
expos\'{e} VI, Lecture Notes in Math. 224, 145-194,  Springer,
Berlin (1971).





\bibitem{gurski} N. Gurski, An algebraic theory of tricategory, Ph.
D. tesis, University of Chicago, 2006.
\bibitem{gurski2} N. Gurski, Nerves of bicategories as stratified simplicial sets, J. Pure Appl.
Algebra 213 (2009) 927-946.

\bibitem{hinich} V.A. Hinich, V.V. Schechtman, Geometry of a category of complexes and algebraic
K-theory, Duke Math. Journal 52  (1985) 339-430.

\bibitem{jardine} J.F. Jardine, Supercoherence, J. Pure Appl. Algebra  75 (1991) 103-194.

\bibitem{joyal}  A. Joyal, R. Street, Braided Tensor Categories, Adv. Math. 102 (1993) 20-78.




\bibitem{KV94b} M.M. Kapranov and V.A. Voevodsky. 2-categories and
Zamolodchikov tetrahedra equations.  Proc. Sympos. Pure Math. 56
(1994) 177-259. Amer. Math. Soc., Providence, RI.

\bibitem{j-t} A. Joyal, M. Tierney, Algebraic homotopy types. Handwritten lecture notes, (1984).


\bibitem{lack} S. Lack, Icons. Applied Categorical Structures 18
(2010) 289-307.

\bibitem{lack2}S. Lack,  A Quillen model structure for
Gray-categories, Journal of K-theory 8 (2011), 183-221.

\bibitem{lack-paoli} S. Lack, S. Paoli, 2-nerves for bicategories, K-Theory 38 (2008)
153-175.
\bibitem{leroy} O. Leroy, Sur une notion de 3-cat\'egorie adapt\'ee
a l'homotopie. AGATA, Univ. Montpellier II (1994).

\bibitem{mac} S. Mac Lane, Categories for the working mathematician, Graduate texts in math., 5, Springer, 1971.

\bibitem{may79} J.P. May,  The spectra associated to permutative categories, Topology  17
(1979) 225-228.



\bibitem{m-s} D. McDuff, G. Segal, Homology fibrations and the "group
completion" theorem, Invent. Math. 31 (1976) 279-284.


\bibitem{moerdijk-svensson}I. Moerdijk, J.A. Svensson: Algebraic classification of equivariant
homotopy 2-types I, J. of Pure and Appl. Algebra 89 (1993) 187-216.



\bibitem{quillen} D. Quillen, Higher algebraic K-theory: I, in {\em Algebraic K-theory, I: Higher K-theories}, Lecture
Notes in Math. 341, 85-147, Springer-Verlag  (1973).




\bibitem{segal68} G.B. Segal, Classifying spaces and spectral sequences, Publ. Math. Inst. des
Hautes Etudes Scient. (Paris) 34 (1968) 105-112.

\bibitem{segal74} G.B. Segal, Categories and cohomology theories, Topology 13 (1974) 293-312.

\bibitem{sim} C. Simson,  A closed model structure for $n$-categories, internal $hom$,
 $n$-staks and generalized Seifert-Van Kampen (arXiv:9704006)



\bibitem{street2} R Street,  The algebra of oriented simplexes, J.
Pure Appl. Algebra  49  (1987) 283-335.


\bibitem{street}  R. Street,  Categorical structures, in Handbook of Algebra 1, 529-577,
  North-Holland (1996)
\bibitem{street3} R. Street, Categorical and combinatorial aspects of descent theory,  Appl.
Cat. Struct. 12 (2004)  537-576.

\bibitem{sta} J.D. Stasheff,
Homotopy associativity of H-spaces I, Trans. A. M. S. 108 (1963),
275-292.

\bibitem{tam} Z. Tamsamani,   Sur des notions de $n$-cat\'{e}gorie et $n$-groupo\"{i}de non strictes
 via des ensembles multi-simpliciaux, K-theory 16 (1999)  51-59.


\bibitem{thomason} R. W. Thomason,
Homotopy colimits in the category of small categories, Math. Proc.
Camb. Philos. Soc. 85  (1979) 91-109.


\bibitem{tonks} K. Worytkiewicz, K. Hess, P.E. Parent, A. Tonks,
 A model structure \`{a} la Thomason on 2-Cat, J. Pure  Appl. Algebra 208 (2007) 205-236.


\end{thebibliography}
\end{document}